\documentclass[11pt]{amsart}

\newcommand{\Addresses}{{
  \bigskip
  \footnotesize  
  
  \noindent Gabriele Viaggi, \textsc{Mathematical Institute, University of Pisa, Pisa}\par\nopagebreak
  \textit{E-mail address}: \texttt{gabriele.viaggi@unipi.it}
  }}

\usepackage[margin=1.5in]{geometry}
\usepackage[colorlinks,citecolor=cyan,linkcolor=blue]{hyperref}

\usepackage{dsfont}
\usepackage{amsmath}
\usepackage{amsthm}
\usepackage{amssymb}
\usepackage[all,cmtip]{xy}
\usepackage{enumerate}
\usepackage{overpic}
\usepackage{float}
\usepackage{xcolor}

%newcommand
\newcommand{\ep}{
\epsilon
}
\newcommand{\mc}[1]{
\mathcal{#1}
}
\newcommand{\mb}[1]{
\mathbb{#1}
}
\newcommand{\T}{
\mc{T}
}

% Theorem environments
\newtheorem*{thm*}{Theorem}
\newtheorem*{cor*}{Corollary}
\newtheorem*{pro*}{Proposition}

\newtheorem{thmA}{Theorem}

\newtheorem{thm}{Theorem}[section]

\newtheorem{cor}[thm]{Corollary}

\newtheorem{lem}[thm]{Lemma}
\newtheorem{pro}[thm]{Proposition}

\theoremstyle{definition}

\newtheorem*{defi*}{Definition}
\newtheorem{dfn}[thm]{Definition}

\newtheorem{remark}[thm]{Remark}

\setlength\parskip{2.5pt}

\title[Effective length-projection bounds]{Effective length-projection bounds for hyperbolic 3-manifolds diffeomorphic to $S\times\mathbb{R}$}
\author{Gabriele Viaggi}

\begin{document}

\begin{abstract}
We give a formula with explicit constants relating the subsurface projection $d_Y(\nu^-,\nu^+)$ of the end invariants $\nu^-,\nu^+$ of a hyperbolic 3-manifold $Q$ diffeomorphic to $S\times\mathbb{R}$ and the length of the geodesic representative in $Q$ of the multicurve $\partial Y$. This makes effective and computable the large projections versus short curves relation proved by Minsky. We give an application to closed hyperbolic 3-manifolds fibering over the circle providing a geometric analog of the uniform projection bound in fibered faces of Minsky and Taylor.
\end{abstract}

\maketitle

\section{Introduction}

The subject of this article is the relation between the lengths of closed geodesics in a hyperbolic 3-manifold diffeomorphic to $S\times\mb{R}$, where $S$ is a closed orientable surface of genus at least 2, and the combinatorics of the graph whose vertices are the essential simple closed curves on $S$ and where edges correspond to the disjointness relation, the so-called {\em curve graph} of $S$ which we denote by $\mc{C}(S)$. 

The link between these two seemingly unrelated worlds is a major discovery due to Minsky \cite{M00,M01} and is part of a much deeper connection that led to the solution of the Ending Lamination Conjecture by Minsky \cite{M10} and Brock, Canary, and Minsky \cite{BrockCanaryMinsky:ELC2}. Our goal is to make a specific part of this correspondence {\em effective} as we will explain. Towards the same direction there has been a lot of effort recently, see for example the works of Aougab, Patel, and Taylor \cite{APT}, Futer and Schleimer \cite{FS}, and Futer, Purcell, and Schleimer \cite{FPS19,FPS22,FPS22b}, but the program of effectivizing the relation between geometry of hyperbolic 3-manifolds and the geometry of the curve graph is far from being complete.

By the solution of the Ending Lamination Conjecture \cite{M10,BrockCanaryMinsky:ELC2}, we know that hyperbolic manifolds $Q$ diffeomorphic to $S\times\mb{R}$ are classified by the so-called {\em end invariants} which are pairs $\nu^-,\nu^+$ each of which is a combination of either (minimal filling) laminations or finite area hyperbolic metrics on proper essential subsurfaces $W\subset S$. The natural problem arising from this description is to read off geometric information about $Q$ in terms of its end invariants. 

In this article we focus on the length of closed geodesics in $Q$. Each end invariant $\nu^-,\nu^+$ determines a trace on every proper essential subsurface $Y\subset S$, the so-called {\em subsurface projection} of $\nu^-,\nu^+$, which is a subset of the curve graph of the subsurface $\mc{C}(Y)$ and is denoted by $\pi_Y(\nu^-),\pi_Y(\nu^+)$. Minsky proves in \cite{M00} that if $d_Y(\nu^-,\nu^+):={\rm diam}_{\mc{C}(Y)}(\pi_Y(\nu^-)\cup\pi_Y(\nu^+))$ is {\em large} then $\partial Y$ is isotopic to a {\em short} geodesic link in $Q$. The works \cite{M10,BrockCanaryMinsky:ELC2} further refine the statement by giving formulas for the length of the geodesic representative of a curve $\gamma\subset S$ in terms of subsurface projections {\em up to multiplicative constants}. However, the constants appearing in \cite{M00,M01,M10,BrockCanaryMinsky:ELC2} arise from various compactness arguments and are not computable. Additionally, it is not at all clear how they change with the topology of the surface. Addressing these questions is our main novel contribution. We prove the following quantitative relation between lengths and projections.

\medskip

\begin{thmA}
\label{thm:main1}
There exist constants $a,b,c>0$ such that the following holds. Let $S$ be a closed orientable surface of genus at least 2. Consider a hyperbolic 3-manifold $Q$ diffeomorphic to $S\times\mb{R}$ with end invariants $\nu^-,\nu^+$. Then, for every proper essential connected non-annular subsurface $Y\subset S$ such that both $\nu^-,\nu^+$ intersect $Y$ and none of the components of $\partial Y$ is parabolic in $Q$ we have the following. If
\[
d_Y(\nu^-,\nu^+)\ge a|\chi(Y)|^{345}+b\log|\chi(S)|,
\]
then $\gamma_Y\times\{0\}$ is isotopic to a geodesic link in $Q$ of length 
\[
\ell_Q(\partial Y)\le\frac{c|\chi(Y)|^{248}}{d_Y(\nu^-,\nu^+)-b\log|\chi(S)|}.
\]
\end{thmA}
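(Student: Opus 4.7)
The plan is to follow Minsky's original argument in \cite{M00} that large subsurface projections force short curves, making every constant in the argument explicit and isolating how each one depends on $|\chi(Y)|$ and $|\chi(S)|$. The organizing idea is to produce a long sequence of pleated surfaces $f_0,\ldots,f_N \colon S\to Q$, each realizing a clean marking $\mu_i$ whose projections to $Y$ trace a tight geodesic in $\mathcal{C}(Y)$ from $\pi_Y(\nu^-)$ to $\pi_Y(\nu^+)$. If $\partial Y$ is not short in $Q$, then all these pleated surfaces must pass through a bounded-geometry region around the geodesic representative of $\partial Y$, which in turn restricts how far apart their markings can be in $\mathcal{C}(Y)$ and therefore bounds $N$ from above.

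First, I would set up the combinatorial skeleton using the Masur--Minsky hierarchy machinery in a quantitative form. The goal is to extract a sequence of clean markings $\mu_0,\ldots,\mu_N$, each containing $\partial Y$ as a base curve, with consecutive markings differing by a bounded combinatorial move and with $\pi_Y(\mu_i)$ advancing along a tight geodesic in $\mathcal{C}(Y)$ at a uniform rate. Here $N$ is essentially $d_Y(\nu^-,\nu^+)$; the additive $\log|\chi(S)|$ correction in the hypothesis is meant to absorb the cost, via an effective Bers-type length bound, of replacing the end invariants $\nu^\pm$ by genuine markings realizable in $Q$ at the two ends.

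Next, I would realize each $\mu_i$ by a pleated surface $f_i\colon S\to Q$ of area bounded by $2\pi|\chi(S)|$. Since $\partial Y\subset\mu_i$ is non-parabolic, each $f_i$ pleats along $\partial Y$ and maps it to its geodesic representative in $Q$. A Gauss--Bonnet-plus-collar argument, with explicit constants, then shows that $f_i(Y)$ is concentrated inside the Margulis tube of $\partial Y$, up to a thick part whose diameter is bounded polynomially in $|\chi(S)|$. The key quantitative input here is an effective bounded-diameter lemma for the set of short curves on a pleated surface of controlled area.

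The hard part, and the source of the high polynomial exponents in the statement, is an effective no-collision principle: if $f_i$ and $f_j$ have thick parts meeting a common region of $Q$, then $d_Y(\mu_i,\mu_j)$ is bounded by a polynomial in $|\chi(Y)|$. Combined with an explicit depth estimate for the Margulis tube of $\partial Y$ in terms of $\ell_Q(\partial Y)$, this yields a bound of the shape $N\le C|\chi(Y)|^{a}/\ell_Q(\partial Y)$ with computable constants $C,a$, which rearranges to the claim. I expect the main difficulty to be keeping each intermediate polynomial estimate sharp enough that compounding them through the hierarchy, pleated surface, and tube steps still produces a meaningful bound at the end rather than blowing up the exponents further.
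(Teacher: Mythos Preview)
Your outline follows Minsky's original pleated-surface argument in $Q$ directly; the paper takes a genuinely different route. Rather than working in $Q$, the paper passes to the \emph{drilled} manifold $M=S\times[-1,1]-V$ (with $V$ a neighborhood of $\gamma_Y\times\{0\}$), puts on it the hyperbolic structure $M(\nu^-,\nu^+)$ with rank-two cusps at $\partial V$ and the same end invariants, and then recovers $Q$ via the effective Hyperbolic Dehn Filling Theorem of Futer--Purcell--Schleimer. The two technical ingredients feeding into the filling step are an effective efficiency theorem (producing moderate-length representatives of $\pi_Y(\nu^\pm)$ in the $\pi_1(S\times\{\pm1/2\})$-covers of $M$) and an effective convex-core width estimate in the $\pi_1(Y)$-cover, which together lower-bound the normalized length of the meridians.

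The crucial point is that this detour through the drilled manifold is not incidental: it is precisely what lets the paper \emph{avoid} the step you flag as the hard part. Your ``effective no-collision principle'' is essentially an effective form of Minsky's Efficiency of Pleated Surfaces, which in \cite{M00} rests on the Uniform Injectivity Theorem --- proved by compactness, with constants that are not computable. You identify this as the main difficulty but give no mechanism for effectivizing it, and none is currently known in the undrilled setting. The paper's substitute is a purely topological ``no-shortcuts'' lemma (if a short bridge in the ideal pleated annulus were homotopic rel endpoints into $Y$, one could reduce $i(\alpha,\gamma_Y)$), which works \emph{because} $\gamma_Y$ has been made parabolic by drilling. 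That topological replacement for Uniform Injectivity is the heart of the argument and is simply unavailable in your setup, where $\partial Y$ is a closed geodesic in $Q$ rather than a cusp.

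So as written, your proposal has a genuine gap at the no-collision step: without an effective Uniform Injectivity or a concrete alternative, the argument cannot close. If you want to salvage the direct-in-$Q$ strategy you would need to supply such a result; otherwise, the drilling-plus-filling architecture is what actually makes the constants computable.
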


\medskip

The constants $a,b,c$ are explicit and computable, in fact, even if we did not try to optimize their values, one can choose $a=2^{1095}/\ep_0^{200},b=1040\log_2(2^{385}/\ep_0^{60}),c=2^{331}/\ep_0^{160}$ where $\ep_0$ is a suitably chosen Margulis constant independent of everything (and again, explicitly computable). Note also that, by the definition we adopt, in some cases the subsurface projections of the end invariants are well-defined up to additive errors proportional to $|\chi(S)|$, so the additive error at the denominator is somewhat inevitable (see Section \ref{subsec:from ends to laminations} where the issue is discussed, in particular Definition \ref{dfn:intersection} and Remark \ref{rmk:additive error}).

Extending the result from closed surfaces to surfaces with punctures should not be a problem, but it would require some additional bookkeeping and perhaps some modification to the values of the constants. In order to streamline the exposition we stick to closed surfaces. 

More important is the restriction to non-annular subsurfaces as it is reflected in geometric features of the Margulis tubes of the geodesic representatives of $\partial Y$ in the hyperbolic manifold $Q$. Heuristically speaking, a large non-annular projection forces the boundaries of these Margulis tubes to be (quantitatively) long and skinny. Thus, having to cross the boundary tube the long way, the so-called standard meridians of the core geodesic of the tube are (quantitatively) long (a key fact for our techniques to work). Instead, if we only had large annular projections, the boundaries of these Margulis tubes could be of uniformly bounded shape and the reason why the standard meridians of the core geodesics are (quantitatively) long is different, namely, they twist a lot around the boundary torus. For a more detailed explanation we refer the reader to \cite[Section 2]{FSVb}.

As it is customary, a result about hyperbolic manifolds diffeomorphic to $S\times\mb{R}$ also gives some insight about the geometry of closed hyperbolic 3-manifolds fibering over the circle. This is also the case for Theorem \ref{thm:main1} as we now discuss.

The existence of hyperbolic 3-manifolds $M$ fibering over the circle is a surprising discovery of Jørgensen \cite{J}. A subsequent breakthrough of Thurston \cite{Thu2} showed that, in fact, every atoroidal 3-manifold fibering over the circle admits a hyperbolic metric. These manifolds can be equivalently described as mapping tori of so-called {\em pseudo-Anosov diffeomorphisms} $f:S\to S$ of closed orientable surfaces $S$, that is $M=S\times[0,1]/(x,0)\sim(f(x),1)$. The covering of $M$ corresponding to $\pi_1(S\times\{0\})$ is a hyperbolic 3-manifold diffeomorphic to $S\times\mb{R}$. Its end invariants are precisely the two invariant laminations of the pseudo-Anosov diffeomorphism.

Thurston also realized \cite{ThuMemo} that often a closed 3-manifold $M$ fibers over the circle in many different ways that are organized by the so-called {\em Thurston norm} on ${\rm H}_2(M;\mb{R})$ in the following way. The unit ball for such norm is a finite sided polyhedron with a distinguished set of top dimensional open faces, the so-called {\em fibered faces}, denoted by ${\bf F}$. Thurston proves that the homology classes of the fibers of the fibrations of $M$ over the circle are exactly the intersection of the primitive elements of the integer lattice ${\rm H}_2(M;\mb{Z})$ with the positive cone $ (0,\infty)\times{\bf F}$. Each such fiber $F\in(0,\infty)\times{\bf F}$ provides a description of $M$ as a mapping torus of a pseudo-Anosov diffeomorphism $f_F:F\to F$ with invariant laminations $\nu^-_F,\nu^+_F$. It is natural to ask whether the subsurface projections $d_Y(\nu_F^-,\nu_F^+)$ for $F\in(0,\infty)\times{\bf F}$ can be simultaneously and uniformly bounded in terms of $M$. 

This was done by Minsky and Taylor \cite{MT17,MT24} from a purely topological perspective. They discovered, among other things, two important phenomena (see \cite[Theorems 1.1 and 1.2]{MT24}): All subsurface projections can be explicitly and uniformly bounded in terms of the number of tetrahedra in a so-called {\em veering triangulation} of $M$. Subsurfaces of fibers $Y\subset F$ whose projection coefficients $d_Y(\nu_F^-,\nu_F^+)$ are large compared to $|\chi(S)|$ for a fixed fiber $S$ in the same component of the fibered cone correspond to embedded subsurfaces of $S$ with the same projection coefficients. Combining Theorem \ref{thm:main1} with their results, we can therefore conclude the following geometric counterpart of their bound. The main new contribution in the above is the explicit formula and the dependence on the injectivity radius. 

\medskip

\begin{thmA}
\label{thm:main2}
There exists a constant $k>0$ such that the following holds. Let $M$ be a closed orientable hyperbolic 3-manifold fibering over the circle. Let $(0,\infty)\times{\rm\bf F}_0$ be a component of the fibered cone $(0,\infty)\times{\rm\bf F}$ for $M$ and denote by $S$ a genus minimizing surface in ${\rm\bf F}_0\times(0,\infty)$. For every fiber $F\subset M$ whose homology class lies in $(0,\infty)\times{\bf F}_0$ and for every proper essential non-annular subsurface $Y\subset F$ we have
\[
d_Y(\nu_F^-,\nu_F^+)\le\frac{k|\chi(S)|^{346}}{{\rm inj}(M)}.
\]
\end{thmA}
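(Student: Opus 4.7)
The plan is to apply Theorem \ref{thm:main1} in the cyclic cover of $M$ associated to the genus-minimizing fiber $S$, after using the Minsky-Taylor transfer to bring a subsurface of an arbitrary fiber $F$ back to $S$. Given $Y \subset F$, the Minsky-Taylor correspondence \cite[Theorems 1.1, 1.2]{MT24} produces, once $d_Y(\nu_F^-, \nu_F^+)$ exceeds an explicit threshold of order $|\chi(S)|$, an essential subsurface $Y' \subset S$ with $d_{Y'}(\nu_S^-, \nu_S^+) \ge d_Y(\nu_F^-, \nu_F^+) - O(|\chi(S)|)$; here $\nu_S^\pm$ are the invariant laminations of the monodromy associated to $S$, and $|\chi(Y')| \le |\chi(S)|$.

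Next I would pass to the cover $Q \to M$ corresponding to $\pi_1(S) \hookrightarrow \pi_1(M)$: this $Q$ is a hyperbolic 3-manifold diffeomorphic to $S \times \mb{R}$ with end invariants $\nu_S^\pm$, both minimal filling laminations, so no component of $\partial Y'$ is parabolic and both end invariants intersect $Y'$. If $d_{Y'}$ exceeds the Theorem \ref{thm:main1} threshold, that theorem yields
\[
\ell_Q(\partial Y') \le \frac{c|\chi(S)|^{248}}{d_{Y'}(\nu_S^-,\nu_S^+) - b\log|\chi(S)|}.
\]
Since $Q \to M$ is a local isometry, every component of the geodesic representative of $\partial Y'$ in $Q$ projects to a closed geodesic in $M$ of no greater length; every such closed geodesic in $M$ has length at least $2\,{\rm inj}(M)$, so $2\,{\rm inj}(M) \le \ell_Q(\partial Y')$. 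Inverting this inequality and translating back via the Minsky-Taylor step gives
\[
d_Y(\nu_F^-,\nu_F^+) \le \frac{c|\chi(S)|^{248}}{2\,{\rm inj}(M)} + b\log|\chi(S)| + O(|\chi(S)|).
\]

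To bundle the remaining low-projection cases (where one of the two thresholds is not met and $d_Y$ is already at most polynomial in $|\chi(S)|$) into a single bound of the form $k|\chi(S)|^{346}/{\rm inj}(M)$, I would split on the size of ${\rm inj}(M)$: when ${\rm inj}(M) \lesssim |\chi(S)|$ the extra factor of $|\chi(S)|$ in the exponent $346$ over the threshold polynomial $|\chi(S)|^{345}$ provides enough slack to absorb the residual $b\log|\chi(S)|$ and $O(|\chi(S)|)$ terms, while in the opposite regime Theorem \ref{thm:main1} applied contrapositively in $Q$ forces $d_{Y'}$ (and hence $d_Y$) to be small enough to fit. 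The main technical obstacle is packaging the Minsky-Taylor transfer with the correct quantitative form—an explicit additive error polynomial in $|\chi(S)|$ rather than a multiplicative distortion of $d_Y$—so that the chain of inequalities above can be inverted cleanly and all constants can be collected into a single universal $k$.
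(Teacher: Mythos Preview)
Your approach is essentially the paper's: transfer $Y\subset F$ to a subsurface $W\subset S$ via Minsky--Taylor, apply Theorem~\ref{thm:main1} in the $\pi_1(S)$-cover $Q\cong S\times\mb{R}$, and compare $\ell_Q(\partial W)$ to $2\,{\rm inj}(M)$. Two simplifications in the paper's version are worth noting.

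First, the case split on the size of ${\rm inj}(M)$ is unnecessary: one always has ${\rm inj}(M)\le\log(4|\chi(S)|)$. This is because there is a $1$-Lipschitz map $(S,\sigma)\to M$ from some hyperbolic structure on $S$ homotopic to the inclusion, and the shortest essential loop on any such $(S,\sigma)$ has length at most $2\log(4|\chi(S)|)$. So your ``opposite regime'' is vacuous, and the residual $b\log|\chi(S)|$ and polynomial-threshold terms can be absorbed directly by multiplying through by ${\rm inj}(M)/{\rm inj}(M)$ and using ${\rm inj}(M)\le 2|\chi(S)|$.

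Second, \cite[Theorem~1.2]{MT24} gives an exact equality $d_Y(\nu_F^-,\nu_F^+)=d_W(\nu_S^-,\nu_S^+)$ once the projection exceeds the transfer threshold, not merely an inequality up to $O(|\chi(S)|)$. So the ``main technical obstacle'' you flag---controlling the additive error in the transfer---does not arise: you can take the single threshold $(a+b)|\chi(S)|^{345}$, which dominates both the Minsky--Taylor threshold and the Theorem~\ref{thm:main1} threshold, and below it $d_Y$ is already bounded by that polynomial.
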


\medskip

Next we briefly describe what goes into the proof of Theorem \ref{thm:main1}.

\subsection*{On the proof}
Our strategy is inspired by \cite{FSVa,FSVb}. We start by observing that $S\times\mb{R}$ can be seen as a {\em Dehn filling} of the drilled manifold $M=S\times\mb{R}-\gamma_Y\times\{0\}$ where $\gamma_Y$ is the multicurve obtained from $\partial Y$ by collapsing parallel components to single ones. At the level of hyperbolic metrics, we would like to say that the metric $Q$ with end invariants $\nu^-,\nu^+$ can be obtained as a hyperbolic Dehn filling of the hyperbolic metric on $M$ with rank two cusps at $\gamma_Y\times\{0\}$ and end invariants $\nu^-,\nu^+$. More quantitatively, we would like to say that the so-called {\em normalized length} of the meridians of the components of $\gamma_Y$ is bounded from below by an explicit formula as the one in Theorem \ref{thm:main1}. This is the main challenge. In order to deal with it we use two ingredients. First, a variation of a result of Aougab, Patel, and Taylor \cite{APT} combined with covering arguments as in \cite{FSVa,FSVb} will tell us that if there are two simple closed curves $\delta^-,\delta^+\subset Y$ whose geodesic representatives in $M$ have length at most $L$ then the length of the meridians of the components of $\gamma_Y$ are at least $d_{\mc{C}(Y)}(\delta^-,\delta^+)/de^{3L}$ for some universal constant $d$ (Theorem \ref{thm:effcusp} and Proposition \ref{pro:covering2}). Thus, we are left with the problem of producing from the end invariants $\nu^-,\nu^+$ simple closed curves $\delta^-,\delta^+\subset Y$ representing $\pi_Y(\nu^-),\pi_Y(\nu^+)$ of explicit moderate length. This is the core of the argument and the second ingredient (Theorem \ref{thm:effeff}). In Minsky's work \cite{M00} such curves are produced using the Efficiency of Pleated Surfaces \cite[Theorem 3.5]{M00} and our approach builds upon a similar principle. However, rather than relying on the Uniform Injectivity Theorem \cite[Theorem 3.2]{M00}, we follow a different, more topological, path which is allowed by our setup and gives us effective length bounds for the surgeries $\delta^-,\delta^+$ of $\nu^-,\nu^+$.

\subsection*{Organization of the article}
The paper is structured as follows.

Sections \ref{sec:2}, \ref{sec:3}, \ref{sec:4} develop the necessary background on hyperbolic surfaces, curve graphs, and hyperbolic 3-manifolds as is needed in the proof of Theorem \ref{thm:main1} which will be discussed only in Section \ref{sec:proof}. The proof in Section \ref{sec:proof} is organized in several steps and requires two crucial ingredients: An effective efficiency result (Theorem \ref{thm:effeff}) and an effective convex core width result (Theorem \ref{thm:effcusp}). These are developed in Sections \ref{sec:effeff} (which might be of independent interest) and \ref{sec:effcusp}.   

\subsection*{Acknowledgements}
I thank the anonymous referee for the careful reading and the very helpful suggestions. I would like to thank Alessandro Sisto and Peter Feller for their generous feedback on the first draft of this paper.

\section{Geometry and topology of hyperbolic surfaces}
\label{sec:2}

In this section we review some of the background on hyperbolic surfaces. There are essentially two properties that we need: 
\begin{itemize}
    \item{Existence of free subgroups generated by moderate-length loops (Lemma \ref{lem:short gen}). This is a standard consequence of the Gauss-Bonnet formula expressing the area of a hyperbolic surface in terms of its topology.}
    \item{Effective recurrence (Lemma \ref{lem:recurrence}) and quasi-geodesic closing (Lemma \ref{lem:quasi-geodesic}). The first, which is again a consequence of Gauss-Bonnet, tells us that a geodesic in the unit tangent bundle of a hyperbolic surface comes often close to itself. The second is a consequence of the local-to-global property of quasi-geodesics in hyperbolic space.}
\end{itemize}

As we wish to obtain explicit bounds, we emphasize the explicit dependence on the topology of the surfaces of all the constants we encounter in the above facts.

\subsection{Thick-thin decomposition}
We begin by recalling a basic structural result of hyperbolic manifolds, namely the thick-thin decomposition. In the case of hyperbolic surfaces the thin parts have a particularly simple description. 

\begin{dfn}[Standard Collar and Cuspidal Part]
Let $\ep_0>0$ be a Margulis constant for dimension 2. If $(Z,\sigma)$ is a hyperbolic surface, then, by the thick-thin decomposition (see \cite[Chapter 4]{Martelli}), every connected component of the {\em $\ep_0$-thin region}
\[
(Z,\sigma)_{\ep_0}:=\{x\in Z\,|\,{\rm inj}_x(Z)<\ep_0\}
\]
is either 
\begin{itemize}
    \item{A {\em $\ep_0$-standard collar} of a simple closed geodesic $\gamma\subset Z$ of length $\ell_\sigma(\gamma)<2\ep_0$, a component which we denote by ${\rm\bf collar}(\gamma,\ep_0)$. Topologically, it is a tubular neighborhood of $\gamma$.}
    \item{A {\em $\ep_0$-cuspidal neighborhood} of an end of $Z$ (a cusp) which we denote by ${\rm\bf cusp}(\gamma,\ep_0)$. Topologically, ${\rm\bf cusp}(\gamma,\ep_0)$ is a punctured disk and $\gamma$ denotes the homotopy class of the simple closed curve $\gamma$ encircling the puncture clockwise.}
\end{itemize}   

The complement of the $\ep_0$-thin region is the {\em $\ep_0$-thick part}.
\end{dfn}

\subsection{Moderate-length curves}
A standard application of the Gauss-Bonnet formula ${\rm Area}(Z,\sigma)=2\pi|\chi(Z)|$ allows us to find curves of uniformly bounded length everywhere on a finite area hyperbolic surface $(Z,\sigma)$. The next lemma, besides providing explicit formulas for the length bounds, gives two different refinements of this fact that will be important for us. The first one upgrades a simple closed geodesic loop of moderate length to a pair of loops of moderate length generating a free subgroup provided that we are in the thick part of the surface (this will play a role in Section \ref{sec:effeff}). The second instead promotes a single moderate-length geodesic to an entire moderate-length pants decomposition, a so-called {\em Bers pants decomposition}.

\begin{lem}
\label{lem:short gen}
Let $(Z,\sigma)$ be a complete finite area orientable hyperbolic surface.
\begin{enumerate}
    \item{For every $x\in Z$ there exists a simple geodesic loop $\gamma$ based at $x$ of length
    \[
    \ell(\gamma)\le 2\cdot{\rm arccosh}(|\chi(Z)|+1).
    \]
    }
    \item{Let $\ep_0<{\rm arcsinh}(1/4)$ be a Margulis constant for dimension 2. For every $x\in Z$ in the $\ep_0$-thick part there exist a pair of simple geodesic loops $\gamma,\gamma'$ based at $x$ of length
    \[
    \ell(\gamma),\ell(\gamma')\le 2\log\left(\frac{256\pi^2|\chi(Z)|^4}{\ep_0^2}\right).
    \]
    that generate  a subgroup $\langle\gamma,\gamma'\rangle<\pi_1(Z,x)$ which is free of rank 2.}
    \item{There exists a pants decomposition of $Z$, called a {\rm Bers pants decomposition}, where each component is a geodesic of length at most $2\pi|\chi(Z)|$.}
\end{enumerate}
\end{lem}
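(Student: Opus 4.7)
The plan is to derive all three parts from Gauss--Bonnet combined with orbit-counting in the universal cover $\widetilde Z$ of $Z$.

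For (1), I would fix a lift $\widetilde x$ of $x$ and note that as long as $\exp_x$ is injective on $B(0,R)\subset T_x Z$, its image is an embedded hyperbolic disk of area $2\pi(\cosh R - 1)$, which cannot exceed $\mathrm{Area}(Z)=2\pi|\chi(Z)|$ by Gauss--Bonnet. Hence injectivity must fail at some $R\le\mathrm{arccosh}(|\chi(Z)|+1)$, which yields a non-trivial geodesic loop at $x$ of length at most $2R$. The shortest such loop is simple because any self-intersection could be spliced to produce a strictly shorter non-trivial loop based at $x$.

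For (2), the main technical step, I plan to refine the counting in the universal cover. Set $G_R:=\{g\in\pi_1(Z,x):d(\widetilde x,g\widetilde x)\le R\}$. The thick-part assumption forces distinct $g\widetilde x$ to be pairwise $2\epsilon_0$-separated, so disjoint $\epsilon_0$-balls give an upper bound of order $e^R/\epsilon_0^2$ on $|G_R|$. Crucially, for any maximal cyclic subgroup $H=\langle h\rangle$, the orbit $H\widetilde x$ lies on an equidistant curve from the axis of $h$, and the $2\epsilon_0$-separation combined with the linear structure of equidistant curves bounds $|G_R\cap H|$ by a quantity that is merely linear in $R$. A matching lower bound on $|G_R|$ comes from a volume/tiling argument in $\widetilde Z$, packing translates of a fundamental domain (whose diameter I would control via the Bers pants decomposition from (3) and the thick-thin decomposition) inside $B(\widetilde x,R)$. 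Choosing $R$ of the order $\log(|\chi(Z)|^4/\epsilon_0^2)$ should then make the lower bound on $|G_R|$ exceed the upper bound on $|G_R\cap H|$, producing two elements $g_1,g_2\in G_R$ lying in distinct maximal cyclic subgroups. Since in the torsion-free hyperbolic group $\pi_1(Z,x)$ commuting is equivalent to sharing an axis, $g_1$ and $g_2$ do not commute and therefore freely generate a rank-$2$ subgroup; the corresponding based loops are simple by the splicing argument from (1).

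For (3), I would use a Bers-type iterative construction: at each stage, find the shortest essential simple closed geodesic on the current (possibly disconnected) hyperbolic surface of Euler characteristic at most $|\chi(Z)|$. A Gauss--Bonnet-type volume argument bounds its length by $2\pi|\chi(Z)|$. Cut along it and recurse on the complement; after a number of iterations controlled by $|\chi(Z)|$ the procedure terminates in a pants decomposition, each component of which has length at most $2\pi|\chi(Z)|$ by construction. The main obstacle is the quantitative matching in (2): obtaining the explicit constants $256\pi^2$ and exponent $|\chi(Z)|^4$ of the statement requires careful control of the diameter of a suitable fundamental domain in terms of $|\chi(Z)|$ and $\epsilon_0$, which is where both (1) and (3) feed back into the proof of (2).
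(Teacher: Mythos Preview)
Your arguments for (1) and (3) are fine and essentially match the paper (which simply cites Parlier for (3)).

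The gap is in (2). Your plan hinges on a lower bound for $|G_R|$ obtained by tiling $B(\widetilde x,R)$ with translates of a fundamental domain ``whose diameter I would control via the Bers pants decomposition from (3) and the thick--thin decomposition''. That diameter is \emph{not} controlled by $|\chi(Z)|$ and $\epsilon_0$: even though $x$ sits in the $\epsilon_0$-thick part, $Z$ may contain a simple closed geodesic of length $\ell\ll\epsilon_0$ elsewhere, and its collar has width of order $\log(1/\ell)$, forcing the diameter of any fundamental domain (and of the thick part of $Z$) to blow up. Bers' bound is only an \emph{upper} bound on pants curves, so (3) gives you nothing here. Without the diameter bound your tiling only yields $|G_{R+\mathrm{diam}}|\gtrsim e^{R}/|\chi(Z)|$, which is useless for producing elements in $G_R$ itself; and there is no uniform exponential lower bound on $|G_R|$ over all finite-area hyperbolic surfaces with a marked $\epsilon_0$-thick point.

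The paper sidesteps this entirely by passing not to the universal cover but to the \emph{annular} cover $p:A\to Z$ corresponding to $\langle\gamma\rangle$, where $\gamma$ is the short loop from (1). In the explicit Fermi (or horocyclic, if $\gamma$ is parabolic) coordinates on $A$ one writes down, case by case, a region around the lift $\bar x$ of radius controlled solely by $|\chi(Z)|$ and $\epsilon_0$ whose area already exceeds $2\pi|\chi(Z)|$; non-injectivity of $p$ on that region produces a loop $\gamma'$ based at $x$ that is \emph{not} a power of $\gamma$, with an explicit length bound. This is morally the same endgame as yours---find an element outside the cyclic subgroup $\langle\gamma\rangle$---but the annular-cover implementation needs no global information about $Z$ beyond its area, which is exactly why it yields constants depending only on $|\chi(Z)|$ and $\epsilon_0$. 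If you want to rescue your approach, you should replace the universal-cover tiling by this annular-cover area argument; the orbit-counting framework does not survive the presence of arbitrarily thin parts away from $x$.
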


\begin{remark}
In the lemma and throughout the text, we use the terminology {\em geodesic loop based at some point} to denote a loop that is geodesic except perhaps at the basepoint where the initial and terminal velocities are allowed to be non-collinear. This has to be distinguished from what we will call a {\em (simple) closed geodesic} that is an (injective) geodesic loop where the initial and terminal velocities are aligned. 
\end{remark}

\begin{proof}
{\bf Property (1)}. Let $\gamma$ be the shortest loop around $x$. By negative curvature, we have $\ell(\gamma)=2\cdot{\rm inj}_x(Z)$ and, hence, as the ball centered at $x$ of radius equal to the injectivity radius is isometric to a ball in $\mb{H}^2$ of the same radius, we have
\[
{\rm Area}(B(x,\ell(\gamma)/2))=2\pi(\cosh(\ell(\gamma)/2)-1).
\]
The area of the embedded ball is smaller than the area of the entire surface which, by Gauss-Bonnet, equals $2\pi|\chi(Z)|$. Thus
$\ell(\gamma)\le 2\cdot{\rm arccosh}(|\chi(Z)|+1)$.

{\bf Property (2)}. Let $\gamma$ be the shortest loop around $x$ as in Property (1). There are two cases: Either $\gamma$ is parabolic or not.

{\slshape Case $\gamma$ parabolic}. Let $p:A\to Z$ be the covering of $Z$ corresponding to the subgroup $\langle\gamma\rangle<\pi_1(Z,x)$. The surface $A$ is a cusp isometric to $(\mb{S}^1\times\mb{R},e^{-2r}{\rm d}l^2+{\rm d}r^2)$ where $\mb{S}^1=[0,1]/0\sim1$. The curve $\gamma$ lifts to a simple closed geodesic loop $\overline{\gamma}\subset A$ based at a point $\overline{x}=(u,R)\in\mb{S}^1\times\mb{R}$. The injectivity radius at a point $\overline{y}=(l,r)\in\mb{S}^1\times\mb{R}$ is half of the length of the geodesic loop homotopic to $\gamma$ based at $\overline{y}$. By basic hyperbolic geometry, it is given by 
\[
{\rm inj}_{\overline{y}}(A)={\rm arcsinh}\left(1/2e^r\right).
\]
In particular, the radius $R$ of $\overline{x}$ satisfies
\[
\ell(\gamma)/2={\rm inj}_{\overline{x}}(A)={\rm arcsinh}\left(1/2e^{R}\right)
\]
while the radius $R_0$ at which the injectivity radius is $\ep_0$ is given by
\[
\ep_0={\rm arcsinh}\left(1/2e^{R_0}\right).
\]

As we assumed $\ell(\gamma)\ge2\ep_0$ we have $R\le R_0$. 

Consider the area of the region $\mb{S}^1\times[R_0,+\infty)$. A simple integration yields
\[
{\rm Area}(\mb{S}^1\times[R_0,+\infty))=e^{-R_0}=2\sinh(\ep_0)<2.
\]

Note also that ${\rm Area}(\mb{S}^1\times[R_1,+\infty))=2\pi|\chi(Z)|+2$ for the radius
\[
R_1=-\log(2\pi|\chi(Z)|+2).
\]

Since ${\rm Area}(\mb{S}^1\times[R_1,+\infty))-{\rm Area}(\mb{S}^1\times[R_0,+\infty))>{\rm Area}(Z,\sigma)$, we deduce that the restriction of $p$ to the region $\mb{S}^1\times[R_1,R_0]$ is not injective. Thus, there are two distinct points $\overline{y}=(l,r),\overline{y}'=(l',r)$ (with $R_1\le r\le R_0$) such that $p(\overline{y})=p(\overline{y}')$. As above, the path $\alpha=p([l,l']\times\{r\})$ is an essential loop on $Z$ based at the projections $p(\overline{y})=p(\overline{y}')$ which is not homotopic to $\gamma$. Its geodesic representative with fixed endpoints $\overline{\alpha}$ has length 
\[
\ell(\overline{\alpha})=2{\rm arcsinh}(d_{\mb{S}^1}(l,l')/2e^{r})\le 2{\rm arcsinh}(1/2e^{R_1})=2{\rm arcsinh}\left(\pi|\chi(Z)|+1\right).
\]

Pre- and post- concatenating $\overline{\alpha}$ with $p((\{u\}\times[R,R_0])\star([u,l]\times\{R_0\})\star(\{l\}\times[R_0,R_1]))$ and its inverse (recall that $\overline{x}=(u,R)$) we obtain a loop $\gamma'$ based at $x$ and freely homotopic to $\alpha$. Its length is bounded by
\begin{align*}
\ell(\gamma')-2\ep_0 &\le 4R_0-2R_1-2R+\ell(\overline{\alpha})\\
&\le 4\log\left(\frac{1}{2\sinh(\ep_0)}\right)+2\log(2\pi|\chi(Z)|+2)-2\log\left(\frac{1}{2\sinh(\ell(\gamma)/2)}\right)\\
&+2{\rm arcsinh}(\pi|\chi(Z)|+1).
\end{align*}
As ${\rm arcsinh}(x)\le\log(4x)$ for $x\ge 1$, $\pi|\chi(Z)|+1\le 2\pi|\chi(Z)|$, and $\sinh(\ell(\gamma)/2)\le|\chi(Z)|$ (a consequence of Property (1)) we obtain the inequality
\[
\le 2\log\left(\frac{16\pi^2|\chi(Z)|^3}{\sinh(\ep_0)^2}\right)\le 2\log\left(\frac{16\pi^2|\chi(Z)|^3}{\ep_0^2}\right).
\]

As $\gamma'$ is not homotopic to $\gamma$, the subgroup $\langle\gamma,\gamma'\rangle$ is a free non-abelian subgroup of rank 2 (by the structure of subgroups of surface groups $\pi_1(Z)$). This concludes the first case.

{\slshape Case $\gamma$ non-parabolic}. Consider $\gamma^*\subset Z$, the geodesic representative of $\gamma$. Let $p:A\to Z$ be the covering of $Z$ corresponding to the subgroup $\langle\gamma\rangle<\pi_1(Z,x)$. The surface $A$ is a hyperbolic annulus with core geodesic given by a lift $\overline{\gamma}^*$ of $\gamma^*$. The curve $\gamma$ lifts to a simple closed geodesic loop $\overline{\gamma}\subset A$ based at a point $\overline{x}=(u,R)$. We now recall that in Fermi coordinates for $\overline{\gamma}^*$, the metric of $A$ is written as ${\rm d}r^2+\cosh(r)^2{\rm d}l^2$ with $l\in\mb{S}^1_{\ell(\gamma^*)}=[0,\ell(\gamma^*)]/0\sim\ell(\gamma^*)$ and $r\in\mb{R}$. The injectivity radius at a point $\overline{y}=(l,r)\in\mb{S}^1_{\ell(\gamma^*)}\times\mb{R}$ is half of the length of the geodesic loop homotopic to $\gamma$ based at $\overline{y}$. By basic hyperbolic geometry, it is given by
\[
{\rm inj}_{\overline{y}}(A)={\rm arcsinh}\left(\cosh(r)\sinh(\ell(\gamma^*)/2)\right).
\]
In particular, the radius $R$ corresponding to $\overline{x}$ solves
\[
\ell(\gamma)/2={\rm inj}_{\overline{x}}(A)={\rm arcsinh}\left(\cosh(R)\sinh(\ell(\gamma^*)/2)\right).
\]

We need to distinguish two sub-cases: Either $\ell(\gamma^*)<2\ep_0$ or not.

{\slshape Case $\ell(\gamma^*)<2\ep_0$}. The radius $R_0$ at which the injectivity radius is $\ep_0$ satisfies
\[
\ep_0={\rm arcsinh}\left(\cosh(R_0)\sinh(\ell(\gamma^*)/2)\right).
\]
As we are assuming $\ell(\gamma^*)<2\ep_0$ we can also conclude that $R<R_0$. 

Similarly to the parabolic case, we consider areas. We have
\[
{\rm Area}\left(\mb{S}^1_{\ell(\gamma^*)}\times[0,r]\right)=\ell(\gamma^*)\sinh(r).
\]
Note that for $r=R_0$ we have
\begin{align*}
{\rm Area}\left(\mb{S}^1_{\ell(\gamma^*)}\times[0,R_0]\right) &=\ell(\gamma^*)\sinh\left({\rm arccosh}\left(\frac{\sinh(\ep_0)}{\sinh(\ell(\gamma^*)/2)}\right)\right)\\
 &\le \sinh(\ep_0)\frac{\ell(\gamma^*)}{\sinh(\ell(\gamma^*)/2)}\le 2\sinh(\ep_0)<2
\end{align*}
where we used $\sinh({\rm arccosh}(x))=\sqrt{x^2-1}\le x$ and $x\le\sinh(x)$. Note also that ${\rm Area}\left(\mb{S}^1_{\ell(\gamma^*)}\times[0,R_1]\right)=2\pi|\chi(Z)|+2$ for the radius 
\[
R_1={\rm arcsinh}\left(\frac{2\pi|\chi(Z)|+2}{\ell(\gamma^*)}\right).
\] 
In particular ${\rm Area}\left(\mb{S}^1_{\ell(\gamma^*)}\times[0,R_1]\right)-{\rm Area}\left(\mb{S}^1_{\ell(\gamma^*)}\times[0,R_0]\right)>2\pi|\chi(Z)|$. 

As a consequence, the restriction of $p$ to $\mb{S}^1_{\ell(\gamma^*)}\times[R_0,R_1]$ is not injective. Thus there are two distinct points $\overline{y}=(l,r),\overline{y}'=(l',r)$ (with $r\in[R_0,R_1]$) such that $p(\overline{y})=p(\overline{y}')$. By covering theory, the path $\alpha=p([l,l']\times\{r\})$ is an essential loop on $Z$ based at the projections $p(\overline{y})=p(\overline{y}')$ which is not homotopic to $\gamma$. By basic hyperbolic geometry, its geodesic representative $\overline{\alpha}$ with fixed endpoints has length
\[
\ell(\overline{\alpha})=2{\rm arcsinh}\left(\sinh(d_{\mb{S}^1_{\ell(\gamma^*)}}(l,l'))\cosh(r)\right)\le2{\rm arcsinh}\left(\sinh(\ell(\gamma^*)/2)\cosh(R_1)\right)\\
\]
using $\cosh(R_1)\le 2\sinh(R_1)$ and the formula for $R_1$ we can continue with
\[
\le2{\rm arcsinh}\left(\frac{\sinh(\ell(\gamma^*)/2)}{\ell(\gamma^*)/2}(2\pi|\chi(Z)|+2)\right).
\]

Pre- and post- concatenating $\overline{\alpha}$ with $p(([u,l]\times\{R\})\star(\{l\}\times[R,r]))$ and its inverse (recall that $\overline{x}=(u,R)$) we obtain a loop $\gamma'$ based at $x$ freely homotopic to $\alpha$. Its length is bounded by
\begin{align*}
\ell(\gamma')-2\ep &\le 2R_1-2R+\ell(\overline{\alpha})\\
&=2{\rm arcsinh}\left(\frac{\pi|\chi(Z)|+1}{\ell(\gamma^*)/2}\right)-2{\rm arccosh}\left(\frac{\sinh(\ell(\gamma)/2)}{\sinh(\ell(\gamma^*)/2)}\right)\\
&+2{\rm arcsinh}\left(\frac{\sinh(\ell(\gamma^*)/2)}{\ell(\gamma^*)/2}(2\pi|\chi(Z)|+2)\right).
\end{align*}
Using $\log(x)\le{\rm arccosh}(x)$ and ${\rm arcsinh}(x)\le\log(4x)$ for $x\ge 1$, and $\pi|\chi(Z)|+1\le 2\pi|\chi(Z)|$, we get
\[
\le 2\log\left(\frac{64\pi^2|\chi(Z)|^2}{\sinh(\ell(\gamma)/2)}\frac{\sinh(\ell(\gamma^*)/2)^2}{\ell(\gamma^*)^2/4}\right).
\]
As $\sinh(x)/x$ has values in $(1,2)$ on $(0,\ep_0)$ and $\ell(\gamma)\ge 2\ep_0$ we obtain the inequality
\[
\le 2\log\left(\frac{256\pi^2|\chi(Z)|^2}{\sinh(2\ep_0)}\right)\le2\log\left(\frac{128\pi^2|\chi(Z)|^2}{\ep_0}\right).
\]
Thus 
\[
\ell(\gamma')\le 2\ep+2\log\left(\frac{128\pi^2|\chi(Z)|^2}{\ep_0}\right)\le 2\log\left(\frac{128\pi^2|\chi(Z)|^2}{\ep_0^2}\right).
\]

Since $\gamma'$ is not homotopic to $\gamma$, the subgroup $\langle\gamma,\gamma'\rangle$ is a free non-abelian subgroup of rank 2 (by the structure of subgroups of surface groups $\pi_1(Z)$).

{\slshape Case $\ell(\gamma^*)\ge 2\ep_0$}. We have ${\rm Area}\left(\mb{S}^1_{\ell(\gamma^*)}\times[0,r]\right)=\ell(\gamma^*)\sinh(r)\ge 2\ep_0\sinh(r)$. Thus ${\rm Area}\left(\mb{S}^1_{\ell(\gamma^*)}\times[0,R_1]\right)\ge 2\pi|\chi(Z)|$ for the radius 
\[
R_1={\rm arcsinh}(\pi|\chi(Z)|/\ep_0).
\]

As before, comparing with ${\rm Area}(Z,\sigma)$ we deduce that the restriction of $p$ to $\mb{S}^1_{\ell(\gamma^*)}\times[0,R_1]$ is not injective. Thus there are two distinct points $\overline{y}=(l,r),\overline{y}'=(l',r)$ such that $p(\overline{y})=p(\overline{y}')$. By covering theory, the path $\alpha=p(([l,l']\times\{r\}))$ is an essential loop on $Z$ based at the projections $p(\overline{y})=p(\overline{y}')$ which is not homotopic to $\gamma$. By basic hyperbolic geometry, its geodesic representative $\overline{\alpha}$ with fixed endpoints has length
\[
\ell(\overline{\alpha})=2{\rm arcsinh}\left(\sinh\left(d_{\mb{S}^1_{\ell(\gamma^*)}}(l,l')\right)\cosh(r)\right)\le2{\rm arcsinh}\left(\sinh(\ell(\gamma^*)/2)\cosh(R_1)\right)
\]
using $\cosh(R_1)\le 2\sinh(R_1)$ and the formula for $R_1$ we can continue with
\[
\le2{\rm arcsinh}\left(\sinh(\ell(\gamma^*)/2)\frac{2\pi|\chi(Z)|}{\ep_0}\right).
\]

Pre- and post- concatenating $\overline{\alpha}$ with $p(\overline{(\{u\}\times[0,R])}\star([u,l]\times\{0\})\star(\{l\}\times[0,r]))$ and its inverse (recall that $\overline{x}=(u,R)$) we obtain a loop based at $x$ and freely homotopic to $\alpha$. Its length is bounded by
\begin{align*}
&\ell(\gamma')\le 2R_1+2R+\ell(\gamma^*)+\ell(\overline{\alpha})\le 2R_1+2R+\ell(\gamma)+\ell(\overline{\alpha})\\
&\le2{\rm arcsinh}\left(\frac{\pi|\chi(Z)|}{\ep_0}\right)+2{\rm arccosh}\left(\frac{\sinh(\ell(\gamma)/2)}{\sinh(\ell(\gamma^*)/2)}\right)+2{\rm arccosh}(|\chi(Z)|+1)\\
&+2{\rm arcsinh}\left(\sinh(\ell(\gamma^*)/2)\frac{2\pi|\chi(Z)|}{\ep_0}\right).
\end{align*}
As $\le{\rm arccosh}(x)\le\log(2x)$ for $x>1$, ${\rm arcsinh}(x)\le\log(4x)$ for $x\ge 1$, we obtain 
\[
\le2\log\left(256\frac{\pi^2|\chi(Z)|^3}{\ep_0^2}\sinh(\ell(\gamma)/2)\right).
\]
Lastly, we recall that $\sinh(\ell(\gamma)/2)\le|\chi(Z)|$ (a consequence of Property (1)) and get
\[
\le2\log\left(256\frac{\pi^2|\chi(Z)|^4}{\ep_0^2}\right).
\]

Since $\gamma'$ is not homotopic to $\gamma$, the subgroup $\langle\gamma,\gamma'\rangle$ is a free non-abelian subgroup of rank 2 (by the structure of subgroups of surface groups $\pi_1(Z)$). 

This concludes the non-parabolic case and the proof of Property (2).

{\bf Property (3)}. See \cite{Parlier}.
\end{proof}

\subsection{Recurrence and quasi-geodesic closing}
Lastly, we discuss a recurrence property of the geodesic flow on a finite area hyperbolic surface and how to use it to approximate geodesic segments with closed geodesics. This will play a role in the proof of Theorem \ref{thm:effeff}. The following lemma says in a quantitative way that geodesic segments in the thick part come often close to themselves.  

\begin{lem}
\label{lem:recurrence}
Fix $\ep_0>0$. For every $\ep<\ep_0/2$ denote by ${\rm vol}(\ep)$ the volume of a ball of radius $\ep$ in $T^1\mb{H}^2$. Let $Z$ be a connected orientable complete hyperbolic surface of finite area. Consider a geodesic segment (parameterized by unit speed) $\tau:[0,T]\to Z$ such that ${\rm inj}_{\tau(t)}(Z)\ge\ep_0$ for every $t$. For every sequence of times $0\le t_1<\cdots<t_m\le T$ such that $t_j-t_{j-1}\ge 1$. Denote by $V:=\{v_j:=\tau'(t_j)\}\subset T^1Z$ the corresponding set of velocities. There exists $\{w_1,\cdots,w_n\}\subset V$ with 
\[
n=\left\lfloor\frac{{\rm vol}(\ep/2)}{2\pi|\chi(Z)|}m\right\rfloor
\]
such that $d_{T^1Z}(w_i,w_j)<\ep$ for every $i,j\le n$.    
\end{lem}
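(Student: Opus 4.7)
The plan is a Liouville-type pigeonhole argument on the unit tangent bundle $T^1Z$. First, by Gauss--Bonnet ${\rm Area}(Z,\sigma) = 2\pi|\chi(Z)|$, and hence, in the normalization compatible with the definition of ${\rm vol}(\ep)$, one has ${\rm vol}(T^1Z) = 2\pi|\chi(Z)|$. Second, I would observe that each ball $B(v_j,\ep/2) \subset T^1Z$ embeds with its full model volume ${\rm vol}(\ep/2)$: this is the crucial thick-part input, since $\tau(t_j)$ lies in the $\ep_0$-thick part and $\ep/2 < \ep_0$, so the metric ball $B(\tau(t_j),\ep/2) \subset Z$ lifts isometrically to $\mb{H}^2$, and hence so does $B(v_j,\ep/2) \subset T^1Z$.

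With these two ingredients in hand, I would introduce the multiplicity function
\[
\Phi(p) := \#\{\,j \in \{1,\dots,m\} \;:\; v_j \in B(p,\ep/2)\,\}
\]
and compute via Fubini
\[
\int_{T^1Z} \Phi\; d{\rm vol} \;=\; \sum_{j=1}^{m} {\rm vol}\bigl(B(v_j,\ep/2)\bigr) \;=\; m\cdot{\rm vol}(\ep/2).
\]
Dividing by ${\rm vol}(T^1Z) = 2\pi|\chi(Z)|$, the average value of $\Phi$ is $\frac{m\cdot{\rm vol}(\ep/2)}{2\pi|\chi(Z)|}$. Since $\Phi$ takes values in $\mb{N}$, there exists $p^* \in T^1Z$ with
\[
\Phi(p^*) \;\ge\; \left\lfloor \frac{m\cdot{\rm vol}(\ep/2)}{2\pi|\chi(Z)|} \right\rfloor = n.
\]
Choosing $\{w_1,\dots,w_n\}$ to be any $n$ of the $v_j$'s lying in $B(p^*,\ep/2)$ yields, by the triangle inequality, $d_{T^1Z}(w_i,w_j) < \ep$ for every $i,j$.

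The only substantive point is the ball-volume claim, which reduces to showing that the injectivity radius of $T^1Z$ at $v_j$ is at least $\ep/2$; this follows from ${\rm inj}_{\tau(t_j)}(Z) \ge \ep_0 > \ep/2$, because any non-trivial deck-transformation identification within $B(v_j,\ep/2)$ would project to an identification of footpoints inside $B(\tau(t_j),\ep/2) \subset Z$, contradicting the injectivity bound at $\tau(t_j)$.

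Notice that the spacing $t_j - t_{j-1} \ge 1$ does not enter the counting above: the pigeonhole step only needs that the $v_j$'s are labelled by distinct indices. The spacing will only become essential in downstream applications of this lemma (notably in conjunction with the Anosov closing Lemma \ref{lem:quasi-geodesic}), where one wants the close returns to correspond to genuinely distinct arcs of $\tau$ rather than trivially close time-samples.
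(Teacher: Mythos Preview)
Your proposal is correct and follows essentially the same argument as the paper: both place $\ep/2$-balls around the $v_j$ in $T^1Z$, use the thick-part hypothesis to see that each such ball has the model volume ${\rm vol}(\ep/2)$, integrate the sum of characteristic functions against the Liouville measure, and apply pigeonhole together with the triangle inequality. Your observation that the spacing condition $t_j-t_{j-1}\ge 1$ is not used here is also accurate; as you say, it only matters downstream when combined with the closing lemma.
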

  
\begin{proof}
Denote by $B_j$ the ball of radius $\ep/2$ around $v_j=\tau'(t_j)$ in $T^1Z$. As the injectivity radius at $\tau(t_j)$ is at least $\ep_0$ and $\ep/2<\ep_0$, the ball $B_j$ is isometric to a ball of radius $\ep/2$ in $T^1\mb{H}^2$. Let $\mathds{1}_{B_j}$ be the characteristic function of $B_j$. We have
\begin{align*}
{\rm vol}(\ep/2)m &=\sum_{j\le m}{{\rm vol{(B_j)}}}\\
 &=\sum_{j\le m}{\int_{T^1Z}{\mathds{1}_{B_j}(v)}{\rm dvol}_{T^1Z}(v)}\\
 &=\int_{T^1Z}{\sum_{j\le m}{\mathds{1}_{B_j}(v)}{\rm dvol}_{T^1Z}(v)}=\int_{T^1Z}{n(v){\rm dvol}_{T^1Z}(v)}
\end{align*}
where $n(v)$ is the number of balls $B_j$ that contain $v\in T^1Z$. If we set $n={\rm max}_{v\in T^1Z}\{n(v)\}$ we get
\[
{\rm vol}(\ep/2)m\le n\cdot{\rm vol}(T^1Z)=2\pi|\chi(Z)|\Longrightarrow n\ge\frac{{\rm vol}(\ep/2)}{2\pi|\chi(Z)|}m.
\]
If $v\in T^1Z$ is a point realizing $n$ and $w_1,\cdots, w_n$ are the centers of the balls $B_j$ containing $v$, then, by the triangle inequality, $d_{T^1Z}(w_i,w_j)\le\ep$ for every $i,j\le n$.
\end{proof} 

Once we have a geodesic segment that comes close to itself in the unit tangent bundle, we can close it and obtain a closed geodesic that approximates it. The tool to do so is the following well-known quasi-geodesic closing lemma.

\begin{lem}
\label{lem:quasi-geodesic}
For every $\ep\in(0,\pi/6]$ there exists an explicit constant $T=T(\ep)$ such that the following holds. Let $M$ be a complete hyperbolic $k$-manifold. Consider a concatenation of geodesic segments $\eta\star\mu\subset M$ such that $\ell(\eta)\ge T,\ell(\mu)\le\ep$ and the terminal (resp. initial) velocity of $\eta$ forms an angle of at most $\ep$ with the initial (resp. terminal) velocity of $\mu$. Then $\eta\star\mu$ is homotopic to a closed geodesic. 
\end{lem}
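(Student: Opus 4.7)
The plan is a standard local-to-global quasi-geodesic argument in the $\delta$-hyperbolic space $\mb{H}^k$ (with $\delta$ a universal small constant independent of $k$). I would lift the loop $\sigma:=\eta\star\mu$ to the universal cover, iterate it under the corresponding deck transformation, and obtain a bi-infinite, equivariant piecewise geodesic with only small turn angles and short detours. Once this piecewise geodesic is shown to be a genuine quasi-geodesic, the deck transformation is forced to be loxodromic and its axis descends to a closed geodesic in $M$ freely homotopic to $\sigma$.

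More concretely, fix a lift $\widetilde{\sigma}\subset\mb{H}^k$ of the loop from a basepoint $p$ to the point $q=\phi(p)$, where $\phi\in\mathrm{Isom}(\mb{H}^k)$ is the deck transformation determined by the based homotopy class of $\sigma$. Form the $\phi$-invariant piecewise geodesic
\[
\Gamma:=\bigcup_{n\in\mb{Z}}\phi^n(\widetilde{\sigma}),
\]
consisting of long arcs of length at least $T$ (lifts of $\eta$) alternating with short arcs of length at most $\ep$ (lifts of $\mu$), with exterior angle at each breakpoint at most $\ep$. A direct hyperbolic trigonometry computation in $\mb{H}^2$ (which is enough since any two consecutive pieces span a totally geodesic plane) shows that if two geodesic segments of length at least $T$ are joined by a segment of length at most $\ep$ with turn angles at most $\ep$ at each endpoint, then the arc-length parametrization of the resulting concatenation is a $(2,c_0\ep)$-quasi-geodesic, provided $T\ge T_0(\ep)$ for an explicit $T_0(\ep)$ and a universal $c_0$. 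Iterating through consecutive corners, every subarc of $\Gamma$ of length at most $4T$ is then a $(2,c_0\ep)$-quasi-geodesic.

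I would then invoke the standard local-to-global theorem for quasi-geodesics in $\mb{H}^k$: there is an explicit scale $L_0=L_0(\delta,2,c_0\ep)$ so that any path which is $(2,c_0\ep)$-quasi-geodesic on every subsegment of length $L_0$ is globally a quasi-geodesic. Defining $T=T(\ep)$ so that simultaneously $T\ge T_0(\ep)$ and $4T\ge L_0$ (both explicit in $\ep$) upgrades $\Gamma$ to a bi-infinite quasi-geodesic, and the Morse lemma in $\mb{H}^k$ produces a unique geodesic line $\widetilde{\gamma}$ fellow-traveling with $\Gamma$.

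Finally, the $\phi$-equivariance of $\Gamma$ forces $\phi$ to coarsely preserve $\widetilde{\gamma}$. Since elliptic isometries have bounded orbits and parabolic isometries cannot coarsely preserve any complete geodesic line, $\phi$ must be loxodromic with translation axis $\widetilde{\gamma}$, and the projection of $\widetilde{\gamma}$ to $M$ is a closed geodesic freely homotopic to $\sigma=\eta\star\mu$. The main obstacle is the interplay between the local trigonometric estimate and the local-to-global threshold: one must pick $T(\ep)$ large enough that both the local quasi-geodesic estimate holds and the hyperbolicity constant of $\mb{H}^k$ absorbs the small additive error, and then extract a clean explicit formula for $T$ from these two ingredients, which the hypothesis $\ep\le\pi/6$ comfortably permits.
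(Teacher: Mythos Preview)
Your approach is correct and essentially the same as the paper's: lift to $\mb{H}^k$, show the iterated path is a local quasi-geodesic via hyperbolic trigonometry, and apply the local-to-global theorem for quasi-geodesics (the paper cites \cite[Theorem~1.4]{CDP}) to conclude the deck transformation is loxodromic. The only difference is that the paper first replaces $\eta\star\mu$ by the geodesic loop $\tau$ in the same based homotopy class, so the iterated lift has a single segment per period with break angle at most $3\ep$, giving a clean $(1,\log 4)$-local quasi-geodesic via the law of cosines rather than your $(2,c_0\ep)$ estimate carrying both the $\eta$- and $\mu$-pieces through the argument.
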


Numerically, we can choose 
\[
T(\pi/6)=3072\log(2)\log_2(148)+3280\log(2)+384<200000.
\]
This value can be computed by a careful bookkeeping following the proofs of \cite[Theorem 1.2]{CDP} and \cite[Theorem 1.4]{CDP} using as input $\lambda=1,k=\log(4),\delta=\log(2)$.

\begin{proof}
Let $\tau$ be the geodesic loop in the same homotopy class with fixed endpoints as $\eta\star\mu$. The composition $\eta\star\mu\star\overline{\tau}$ is nullhomotopic in $M$ (where $\overline{\tau}$ is $\tau$ with the inverse parameterization) and, hence, it lifts to a geodesic triangle in $\mb{H}^k$, the universal cover of $M$. By the triangle inequality $\ell(\tau)\ge\ell(\eta)-\ell(\mu)\ge T-\ep$. Additionally, the angle between the terminal (resp. initial) velocity of $\tau$  and the terminal (resp. initial) velocity of $\mu$ (resp. $\eta$) is at most $\ep$ (as the sum of the interior angles in the hyperbolic triangle $\eta\mu\tau$ is smaller than $\pi$ and the angle at $\eta\cap\mu$ is larger than $\pi-\ep$ by assumption). By the triangle inequality the angle between the initial and terminal velocities of $\tau$ is at most $3\ep$.

Consider the lift of $\tau:\mb{S}^1\to M$ to the universal cover $\tilde{\tau}:\mb{R}\to\mb{H}^k$. By the discussion above $\tilde{\tau}$ is a piecewise geodesic path made of segments of length $\ell(\tau)\ge T-\ep$ and breaking angles at most $3\ep$. This path is a $(1,\log(4),1)$-local quasi-geodesic (we recall that $\tilde{\tau}$ is a $(L,C,R)$-local quasi-geodesic if for every $t,t'\in\mb{R}$ with $|t-t'|<R$ we have $|t-t'|/L-C\le d_{\mb{H}^k}(\tilde{\tau}(t),\tilde{\tau}(t'))\le L|t-t'|+C$).

In fact, if $t<t'<t+1$ then either $\tilde{\tau}(t),\tilde{\tau}(t')$ lie on the same segment, in which case there is nothing to add, or they lie on two consecutive edges, that is there is exactly a breaking point $t''\in[t,t']$. By the hyperbolic law of cosines
\begin{align*}
\cosh(d_{\mb{H}^k}(\tilde{\tau}(t),\tilde{\tau}(t'))) &=\cosh(t''-t)\cosh(t'-t'')-\cos(\theta)\sinh(t''-t)\sinh(t'-t'')\\
&\ge\cosh(t''-t)\cosh(t'-t'')
\end{align*}
where in the inequality we used the fact that $\theta\ge\pi-3\ep\ge\pi/2$ which implies $\cos(\theta)\le 0$. Using the approximation $e^x/2\le\cosh(x)=(e^x+e^{-x})/2\le e^x$ we get 
\[
e^{d_{\mb{H}^k}(\tilde{\tau}(t),\tilde{\tau}(t'))}\ge e^{t''-t}e^{t'-t''}/4\Rightarrow d_{\mb{H}^k}(\tilde{\tau}(t),\tilde{\tau}(t'))\ge t'-t-\log(4).
\]
By the local-to-global properties of quasi-geodesics (see \cite[Theorem 1.4]{CDP}), if $T$ is larger than some explicitly computable threshold only depending on the local quasi-geodesic constants $1,\log(4),1$ and the hyperbolicity constant of $\mb{H}^k$, then $\tilde{\tau}$ is a global $(c,c)$-quasi-geodesic (for some constant $c$ explicitly computable in terms of the same data).
\end{proof}

\section{Curve graphs}
\label{sec:3}

In this section we recall the definitions of curve graphs and subsurface projections and we discuss some relations between the geometry of these objects and the geometry of hyperbolic surfaces. More precisely:
\begin{itemize}
    \item{The distance-versus-intersection (Lemma \ref{lem:intersec dist}) and the intersection-versus-length (Lemma \ref{lem:length intersection}) bounds. The first is a general tool to bound distances in the curve graph in terms of intersections of simple closed curves. The second, a standard consequence of the Collar Lemma, bounds the number of intersections of two simple closed geodesics on a hyperbolic surface in terms of their lengths.}
    \item{Ending Laminations and convergence to the boundary of the curve graph.}
\end{itemize}

\subsection{Curve graph}
We begin with the following definition.

\begin{dfn}[Curve Graph]
Let $Z$ be a compact connected orientable surface with $\chi(Z)<0$ which is not a 4-holed sphere or a 1-holed torus. The curve graph $\mc{C}(Z)$ is the graph whose vertices are the isotopy classes of essential non-peripheral simple closed curves $\gamma\subset Z$ and there is an edge of length 1 between $\alpha$ and $\beta$ if there are representatives of the isotopy classes that are disjoint. It is a standard fact that the curve graph is connected and hence it has a well-defined path metric which we denote by $d_{\mc{C}(Z)}(\bullet,\bullet)$.

In the special cases of 4-holed spheres or 1-holed tori two essential simple closed curves always intersect at least 2 or 1 times, respectively. For these surfaces we can still construct the curve graph, but edges correspond to simple closed curves that intersect exactly 2 or 1 times, respectively. It is a standard fact that the resulting graph is again connected and has a well-defined path metric.

Lastly, if $Z$ is a 3-holed sphere then $\mc{C}(Z)=\emptyset$.
\end{dfn}

\subsection{Distance and intersections}
It will be important for us to be able to estimate distances in the curve graph. In order to do so, we will use the following useful tool that relates intersections of curves to distances. The original argument due to Hempel \cite[Lemma 2.1]{Hempel:Heegaard} was stated for closed surfaces, but it extends to compact surfaces as explained in \cite[Lemma 1.21]{S}.

\begin{lem}
\label{lem:intersec dist}
We have
\[
d_{\mc{C}(Z)}(\alpha,\beta)\le 2+2\log_2(i(\alpha,\beta))
\]
where $i(\bullet,\bullet)$ is the geometric intersection number between simple closed curves.
\end{lem}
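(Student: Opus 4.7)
The plan is to argue by induction on $n := i(\alpha,\beta)$, using a classical halving surgery to reduce the intersection number at the cost of moving one step in $\mc{C}(Z)$. For the base $n\le 1$ the claimed bound reads $d_{\mc{C}(Z)}(\alpha,\beta)\le 2$. On surfaces where the edge relation is ``disjoint'' this holds because disjoint curves are at distance $1$ and curves meeting in a single point can always be surgered to produce a simple closed curve disjoint from both; on the $1$-holed torus and $4$-holed sphere the edge relation is defined precisely so that two curves meeting once (resp. twice) are already adjacent, so the bound is tautological in the relevant range.

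For the inductive step with $n\ge 2$, I would first realize $\alpha,\beta$ in minimal position and label the intersection points $p_1,\dots,p_n$ in the cyclic order in which they appear along $\beta$. Let $\beta'\subset\beta$ be the sub-arc from $p_1$ to $p_{\lfloor n/2\rfloor+1}$ passing through $p_2,\dots,p_{\lfloor n/2\rfloor}$, so that $\beta'$ meets $\alpha$ in $\lfloor n/2\rfloor-1$ interior points. Its two endpoints split $\alpha$ into sub-arcs $\alpha_1,\alpha_2$; by pigeonhole one of them, say $\alpha_1$, contains at most $\lfloor (n-2)/2\rfloor$ of the remaining intersection points in its interior. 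Concatenating gives a closed loop $\gamma^\ast := \beta'\cup\alpha_1$, and pushing the $\alpha_1$-portion slightly off $\alpha$ to one chosen side turns it into an embedded simple closed curve $\gamma$ disjoint from $\alpha$. Since the $\beta'$-portion of $\gamma$ can be isotoped off $\beta$ (using that $\beta'$ is already embedded inside $\beta$), the transverse intersections of $\gamma$ with $\beta$ come only from the interior crossings of $\alpha_1$ with $\beta$, giving $i(\gamma,\beta)\le n/2$.

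Assuming $\gamma$ is essential and non-peripheral, the inductive hypothesis applied to the pair $(\gamma,\beta)$ yields $d_{\mc{C}(Z)}(\gamma,\beta)\le 2+2\log_2(n/2)=2\log_2(n)$, and since $d_{\mc{C}(Z)}(\alpha,\gamma)\le 1$ by construction, the triangle inequality gives $d_{\mc{C}(Z)}(\alpha,\beta)\le 1+2\log_2(n)\le 2+2\log_2(n)$, closing the induction.

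The main obstacle is precisely the essentiality and non-peripherality of $\gamma$: the pushed-off curve might a priori bound a disk in $Z$ or cobound an annulus with a component of $\partial Z$. I plan to handle this by performing the analogous surgery with the other sub-arc $\alpha_2$ in place of $\alpha_1$ whenever the first choice fails, and by observing that both surgeries cannot simultaneously produce inessential or peripheral curves: if they did, the union of the two bounding disks/annuli with $\alpha$ and $\beta'$ would exhibit either a bigon between $\alpha$ and $\beta$ (contradicting minimal position) or a configuration forcing $\alpha$ itself to be inessential or peripheral, contrary to our standing assumption on the vertices of $\mc{C}(Z)$.
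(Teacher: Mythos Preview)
The paper does not supply its own proof of this lemma; it is simply quoted from Hempel. Your inductive scheme is the right idea, but the surgery you describe does not produce a curve disjoint from $\alpha$. You chose $\beta'$ to run from $p_1$ to $p_{\lfloor n/2\rfloor+1}$, so by your own count $\beta'$ crosses $\alpha$ transversely at the $\lfloor n/2\rfloor-1$ interior points $p_2,\dots,p_{\lfloor n/2\rfloor}$. Pushing the $\alpha_1$-portion off $\alpha$ does nothing to remove those crossings, so $\gamma$ still meets $\alpha$ in at least $\lfloor n/2\rfloor-1$ points and the step $d_{\mc{C}(Z)}(\alpha,\gamma)\le 1$ fails. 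There is a second, related problem: the interior crossings of $\alpha_1$ with $\beta$ are some subset of $\{p_j : j\ne 1,\ \lfloor n/2\rfloor+1\}$, and nothing prevents them from lying on $\beta'$; when that happens $\gamma^\ast=\beta'\cup\alpha_1$ is not even embedded.

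The repair is Hempel's actual surgery: pick two intersection points that are \emph{consecutive along $\alpha$}, so that the sub-arc $a\subset\alpha$ between them has \emph{no} interior crossings with $\beta$. Concatenating $a$ with either sub-arc $b_1,b_2\subset\beta$ and pushing off $\beta$ then yields a simple closed curve disjoint from $\beta$; pigeonhole on the remaining $n-2$ intersection points gives $i(\gamma,\alpha)\le \lfloor n/2\rfloor$ for one of the two choices, and your induction (with the roles of $\alpha$ and $\beta$ swapped) closes. Your discussion of the essentiality/peripherality obstruction carries over essentially unchanged to this corrected surgery.
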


In turn, on a hyperbolic surface, intersections can be bounded in terms of lengths thanks to the Collar Lemma (see for example \cite[Section 7.3.3]{Martelli}). Explicit intersection versus length bounds are stated in the next lemma.

\begin{lem}
\label{lem:length intersection}    
Let $(Z,\sigma)$ be a complete finite area hyperbolic surface. Consider simple closed geodesics $\alpha,\beta\subset Z$ of length $\ell(\alpha),\ell(\beta)$. Then
\[
i(\alpha,\beta)\le \ell(\alpha)e^{\ell(\beta)/2}.
\]
\end{lem}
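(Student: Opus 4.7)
My approach is the standard Collar Lemma argument, with an elementary inequality at the end to convert the width formula into the clean exponential bound.

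First, by the Collar Lemma, $\beta$ admits an embedded tubular neighborhood $\mathcal{N}(\beta)$ of half-width $w = \operatorname{arcsinh}(1/\sinh(\ell(\beta)/2))$. In Fermi coordinates $(r,\theta)\in(-w,w)\times\mathbb{S}^1_{\ell(\beta)}$ around $\beta$ the metric reads $dr^2+\cosh^2(r)\,d\theta^2$, and $\mathcal{N}(\beta)$ is isometric to the quotient of the strip $\{|r|<w\}\subset\mathbb{H}^2$ by a cyclic group of isometries preserving $\tilde\beta=\{r=0\}$.

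Next, I count crossings. Assume $\alpha\ne\beta$ (otherwise $i(\alpha,\beta)=0$ and the inequality is trivial). Each connected component $\tau$ of $\alpha\cap\mathcal{N}(\beta)$ is a geodesic arc with both endpoints on $\partial\mathcal{N}(\beta)$. Lifting such a $\tau$ to the strip yields a single geodesic arc $\tilde\tau\subset\mathbb{H}^2$, and since two distinct geodesics in $\mathbb{H}^2$ meet at most once, $\tilde\tau$ intersects $\tilde\beta$ in at most one point; the covering argument then shows $\tau$ meets $\beta$ at most once. Hence $i(\alpha,\beta)$ is bounded above by the number of components of $\alpha\cap\mathcal{N}(\beta)$ that actually cross $\beta$. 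Each such crossing component has length at least $2w$, since any path from $r=-w$ to $r=w$ satisfies $\int\sqrt{dr^2+\cosh^2(r)d\theta^2}\ge\int|dr|\ge 2w$. Summing over crossings,
\[
i(\alpha,\beta)\cdot 2w \;\le\; \ell(\alpha),\qquad\text{so}\qquad i(\alpha,\beta)\le\frac{\ell(\alpha)}{2w}.
\]

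The final step is to check $2w\ge e^{-\ell(\beta)/2}$. Writing $L=\ell(\beta)/2$, I would use $\sinh(L)\le e^{L}/2$ to get $1/\sinh(L)\ge 2e^{-L}$, and the monotonicity/derivative comparison $\operatorname{arcsinh}(x)\ge\log(1+x)$ (valid for $x\ge 0$ since $\sqrt{1+x^2}\le 1+x$) to get
\[
2w \;\ge\; 2\log(1+2e^{-L}).
\]
The inequality $2\log(1+2u)\ge u$ for $u\in(0,1]$ is equivalent to $(1+2u)^2\ge e^u$, which is straightforward to verify on $[0,1]$ by comparing values at $u=0,1$ and using that the left-hand side is a polynomial of degree $2$ dominating the Taylor expansion of $e^u$ on this range. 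Taking $u=e^{-L}\in(0,1]$ yields $2w\ge e^{-L}=e^{-\ell(\beta)/2}$ and hence $i(\alpha,\beta)\le\ell(\alpha)e^{\ell(\beta)/2}$.

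The only mildly delicate point is the claim that each component of $\alpha\cap\mathcal{N}(\beta)$ crosses $\beta$ at most once; everything else is either the Collar Lemma or elementary estimates. I do not expect any serious obstacle.
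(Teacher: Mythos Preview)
Your proof is correct and follows essentially the same Collar Lemma argument as the paper: both show that each intersection forces $\alpha$ to traverse a tube of width at least $e^{-\ell(\beta)/2}$ around $\beta$. The only differences are cosmetic --- you write the collar half-width as $\operatorname{arcsinh}(1/\sinh(\ell(\beta)/2))$ while the paper uses the equivalent form $\log(\coth(\ell(\beta)/4))$, and your final elementary estimate goes through $\operatorname{arcsinh}(x)\ge\log(1+x)$ and $(1+2u)^2\ge e^u$ whereas the paper uses the mean value theorem on $\log(1+x)-\log(1-x)$; your extra care in verifying that each component of $\alpha\cap\mathcal{N}(\beta)$ crosses $\beta$ at most once is a point the paper leaves implicit.
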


\begin{proof}
By the Collar Lemma, every simple closed geodesic $\gamma$ on a hyperbolic surface $(Z,\sigma)$ has an embedded normal neighborhood of width 
\[
\log(\coth(\ell(\gamma)/4))=\log\left(\frac{e^{\ell(\gamma)/4}+e^{-\ell(\gamma)/4}}{e^{\ell(\gamma)/4}-e^{-\ell(\gamma)/4}}\right)=\log\left(1+e^{-\ell(\gamma)/2}\right)-\log\left(1-e^{-\ell(\gamma)/2}\right).
%=\log\left(1+\frac{2}{e^{\ell(\gamma)/2}-1}\right)\ge.
\] 
Using the mean value theorem we have $\log(1+x)-\log(1-x)=2x/(1+x_0)$ for some $x_0\in[-x,x]$ and recalling that $|x|<1$ we can further bound from below that quantity by $2x/(1+x_0)\ge x$. Thus, the width of the normal neighborhood is at least $e^{-\ell(\gamma)/2}$. Every time $\alpha$ intersects $\beta$ it has to cross the normal neighborhood from side to side and, hence, it picks up length in the measure of at least $e^{-\ell(\beta)/2}$. Thus $\ell(\alpha)\ge i(\alpha,\beta)e^{-\ell(\beta)/2}$ as claimed.
\end{proof}

\subsection{Subsurface projections}
The geometry of the curve graph is reminiscent of the complex nesting relations between subsurfaces of the ambient surface and exhibits a hierarchical structure as described by Masur and Minsky \cite{MasurMinsky:II}. A fundamental tool to understand it is a surgery procedure known as subsurface projection that we now define.

\begin{dfn}[Non-Annular Subsurface Projection]
Let $Z$ be a connected orientable surface of finite type with curve graph $\mc{C}(Z)$. Consider a proper essential non-annular subsurface $W\subset Z$ which is not a 3-holed sphere. 

For every simple closed curve $\alpha\in\mc{C}(Z)$ intersecting essentially $W$ define $\pi_W(\alpha)$ as follows. First put $\alpha$ in minimal position with respect to $\partial W$ and take the collection of isotopy classes of the arcs $\tau$ in $\alpha\cap W$. Each of these arcs has its endpoints on two boundary components (possibly the same) $\partial_\tau^1W,\partial_\tau^2W$. Every component of the boundary of a small tubular neighborhood of $\partial_\tau^1W\cup\tau\cup\partial_\tau^2W$ is an essential non-peripheral curve in $W$. We define the {\em subsurface projection} $\pi_W(\alpha)$ to be the union of all such components as $\tau$ varies in $\alpha\cap W$. 

It is a basic observation of Masur and Minsky (see \cite[Section 2.3]{MasurMinsky:II} and in particular the proof of \cite[Lemma 2.2]{MasurMinsky:II}) that every two curves in $\pi_W(\alpha)$ intersect at most 2 times and, hence, have distance at most 4 in $\mc{C}(W)$ by virtue of Lemma \ref{lem:intersec dist}. In particular ${\rm diam}_{\mc{C}(W)}(\pi_W(\alpha))\le 4$.

When $\gamma\subset Z$ is a multicurve which has at least one component intersecting $W$ essentially one defines $\pi_W(\gamma)$ to be the union of $\pi_W(\alpha)$ over all components $\alpha\subset\gamma$ that intersect essentially $W$. Again, we have ${\rm diam}_{\mc{C}(W)}(\pi_W(\gamma))\le 4$.
\end{dfn}

\subsection{Laminations and Gromov boundary}
It is a fundamental discovery of Masur and Minsky \cite{MasurMinsky:I} that the curve graph $(\mc{C}(Z),d_{\mc{C}(Z)})$ of a finite type surface $Z$ with $\chi(Z)<0$ and different from a 3-holed sphere is a Gromov hyperbolic space of infinite diameter. By work of Klarreich \cite{Kla}, points on the Gromov boundary $\partial\mc{C}(Z)$ are in bijective correspondence with certain {\em laminations} on $Z$ as we now discuss. 

In general, laminations can be understood as objects arising as Hausdorff limits of sequences of simple closed geodesics on $Z$. More formally, we give the following standard definition.  

\begin{dfn}[Lamination]
A {\em lamination} on a hyperbolic surface $(Z,\sigma)$ of finite area is a closed subset $\lambda\subset Z$ which can be written as a union of pairwise disjoint complete simple geodesics, the {\em leaves} of the lamination. A lamination is called {\em maximal} if each component of the complement $Z-\lambda$ is the interior of an ideal hyperbolic triangle.
\end{dfn}

Among all laminations we will be interested in essentially two extreme cases: When the lamination is an ideal triangulation of $(Z,\sigma)$ and when the lamination is an ending lamination, the kind of lamination that appears as end invariant for a topological end of a hyperbolic 3-manifold diffeomorphic to $Z\times\mb{R}$.

\begin{dfn}[Ending Laminations]
A lamination $\lambda\subset (Z,\sigma)$ is {\em minimal} if every half-leaf is dense in $\lambda$ and it is {\em filling} if it intersects every essential simple closed curve of $Z$. An {\em ending lamination} is a lamination that is both minimal and filling. We denote by $\mc{EL}(Z,\sigma)$ the set of ending laminations.   
\end{dfn}

\begin{thm}[Klarreich \cite{Kla}]
\label{thm:klarreich}
Fix a reference hyperbolic metric $(Z,\sigma)$. There is a bijective map $\partial\mc{C}(Z)\to\mc{EL}(Z,\sigma)$.  Moreover, if $\alpha_n\in\mc{C}(Z)$ is a sequence of curves that converges to a point on $\partial\mc{C}(Z)$ corresponding to a lamination $\lambda\in\mc{EL}(Z,\sigma)$, then every accumulation point of the sequence of geodesic representatives of $\alpha_n$ on $(Z,\sigma)$ in the Hausdorff topology is a lamination $\lambda'$ that contains $\lambda$.   
\end{thm}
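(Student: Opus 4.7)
The plan is to follow the classical strategy of constructing the map $\partial\mc{C}(Z)\to\mc{EL}(Z,\sigma)$ via Hausdorff limits of geodesic representatives on $(Z,\sigma)$, then verifying that it is a bijection. First, given $\xi\in\partial\mc{C}(Z)$, I would pick a sequence $\alpha_n\in\mc{C}(Z)$ converging to $\xi$ in the Gromov compactification and extract a subsequence so that the geodesic representatives $\alpha_n^*$ converge in the Hausdorff topology on closed subsets of $Z$ to a lamination $\Lambda$. By compactness of the space of laminations this is always possible.

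The second step is to show that $\Lambda$ is filling and has no closed leaves. If $\Lambda$ had a closed leaf $\gamma$, or were disjoint from some essential simple closed curve $\beta$, then the Collar-Lemma bound of Lemma \ref{lem:length intersection} would force $i(\alpha_n^*,\gamma)$ or $i(\alpha_n^*,\beta)$ to stay bounded in terms of a fixed $\sigma$-transversal to $\gamma$ or $\beta$, and Lemma \ref{lem:intersec dist} would then keep $d_{\mc{C}(Z)}(\alpha_n,\gamma)$ or $d_{\mc{C}(Z)}(\alpha_n,\beta)$ bounded, contradicting $\alpha_n\to\xi$. Consequently $\Lambda$ contains a minimal filling sublamination $\lambda\in\mc{EL}(Z,\sigma)$, and this sublamination is unique (two distinct minimal filling laminations must intersect transversely). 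Setting $\xi\mapsto\lambda$ defines the candidate map.

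Well-definedness and injectivity form the main technical step. For well-definedness I would interleave two sequences $\alpha_n,\beta_n\to\xi$ into a single sequence still converging to $\xi$ and apply the previous paragraph. For injectivity, if $\alpha_n\to\xi$ and $\beta_n\to\eta$ both yield the same $\lambda$, I would argue that $d_{\mc{C}(Z)}(\alpha_n,\beta_n)$ stays bounded: for $n$ large, both $\alpha_n^*$ and $\beta_n^*$ are carried by an arbitrarily thin $\sigma$-train-track neighborhood of $\lambda$, and a bounded-intersection surgery in the complementary regions produces curves disjoint from a common auxiliary vertex cycle, forcing $\xi=\eta$ by Gromov hyperbolicity of $\mc{C}(Z)$. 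For surjectivity, given $\lambda\in\mc{EL}(Z,\sigma)$ I construct an approximating sequence by taking longer and longer geodesic arcs along a dense leaf of $\lambda$, invoking the recurrence Lemma \ref{lem:recurrence} to find near-returns in $T^1Z$, and closing them via the Anosov-closing Lemma \ref{lem:quasi-geodesic}; the resulting $\alpha_n^*$ lie in shrinking Hausdorff neighborhoods of $\lambda$, are simple for large $n$ (self-intersections would persist under Hausdorff limits, while $\lambda$ has none), and escape to infinity in $\mc{C}(Z)$ by the same distance-versus-intersection argument run contrapositively. The ``moreover'' statement is built into this framework: any Hausdorff accumulation point $\lambda'$ of $\alpha_n^*$ for $\alpha_n\to\xi$ satisfies the conclusion of the second step, so its unique minimal filling sublamination equals $\lambda$, giving $\lambda\subset\lambda'$.

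The hard part is the injectivity step. Passing from ``two sequences Hausdorff-approximate the same filling lamination'' to ``they are at bounded curve-graph distance'' does not follow from a direct length/area comparison and genuinely requires the train-track surgery argument (or an equivalent subsurface-projection argument). The remaining steps reduce cleanly to the explicit geometric tools already developed in Sections \ref{sec:2} and \ref{sec:3}.
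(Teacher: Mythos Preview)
The paper does not prove Theorem \ref{thm:klarreich}; it is stated as a result of Klarreich with a citation to \cite{Kla} and used as a black box. So there is no ``paper's own proof'' to compare against, and you were not expected to supply one.

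That said, a few steps in your sketch would not go through as written. In the second step you invoke Lemma \ref{lem:length intersection} to bound $i(\alpha_n,\gamma)$ or $i(\alpha_n,\beta)$, but that lemma gives $i(\alpha,\beta)\le \ell(\alpha)e^{\ell(\beta)/2}$, which is useless here since $\ell_\sigma(\alpha_n^*)\to\infty$. The correct argument when $\Lambda$ misses some curve $\beta$ is purely topological: a collar of $\beta$ is disjoint from $\Lambda$, so for large $n$ the Hausdorff-close $\alpha_n^*$ avoids that collar and $i(\alpha_n,\beta)=0$, whence $d_{\mc{C}(Z)}(\alpha_n,\beta)\le 1$. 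The closed-leaf case is handled the same way (a closed leaf $\gamma$ of $\Lambda$ is disjoint from the other leaves, so eventually $\alpha_n^*$ either equals $\gamma$ or is disjoint from it). In your surjectivity step, the assertion that the closed-up curves are simple because ``self-intersections would persist under Hausdorff limits'' is false: sequences of non-simple geodesics can Hausdorff-converge to a lamination without self-intersections. Producing \emph{simple} approximants to $\lambda$ requires a different mechanism, e.g.\ taking boundary curves of regular neighborhoods of long arc-plus-short-closing-segment, or using train-track carrying. These are fixable gaps, but the proof is genuinely more delicate than the tools of Sections \ref{sec:2} and \ref{sec:3} alone provide, which is presumably why the paper simply cites Klarreich.
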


\subsection{Subsurface projections for laminations}
Every lamination $\lambda\in\mc{EL}(Z,\sigma)$ has a well defined subsurface projection to every proper essential non-annular subsurface $W\subset Z$ (with totally geodesic boundary) that it intersects essentially. The definition is exactly as the one given for curves: The lamination $\lambda$ intersects $W$ in a collection of properly embedded pairwise disjoint essential arcs. Despite the fact that this collection is typically uncountable, its arcs fall into finitely many isotopy classes (relative to the boundary). For each such arc $\tau\subset \lambda\cap W$ we consider the simple closed curves obtained as boundaries of the regular neighborhood of $\tau$ union the boundary components of $\partial W$ that it connects. The union of all those simple closed curves as $\tau$ varies among $\lambda\cap W$ is the subsurface projection $\pi_W(\lambda)$. As before ${\rm diam}_{\mc{C}(W)}(\pi_W(\lambda))\le 4$.

We can deduce from Theorem \ref{thm:klarreich} that subsurface projections behave well with respect to the convergence to the boundary.

\begin{lem}
\label{lem:projection of limit}   
If $\alpha_n\in\mc{C}(Z)$ converges to $\lambda\in\mc{EL}(Z,\sigma)$ and $\lambda$ intersects essentially a non-annular subsurface $W$, then for every sufficiently large $n$ we have $\pi_W(\lambda)\subset\pi_W(\alpha_n)$.
\end{lem}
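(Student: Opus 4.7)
The plan is to reduce to Hausdorff convergence in the universal cover and to exploit geodesic convexity of the lifts of $W$.

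First, I would invoke Theorem \ref{thm:klarreich} to see that every Hausdorff accumulation point of the sequence of $\sigma$-geodesic representatives $\alpha_n^\ast$ is a lamination $\lambda'\supset\lambda$. A standard compactness-contradiction argument then reduces the lemma to the case in which the entire sequence $\alpha_n^\ast$ Hausdorff converges to some $\lambda'\supset\lambda$: if the desired inclusion failed for infinitely many $n$, I would pass along a Hausdorff convergent subsequence and apply the rest of the argument to derive a contradiction.

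Next, since $\pi_W(\lambda)$ is determined by the finitely many isotopy classes rel $\partial W$ of arcs of $\lambda\cap W$, it is enough to fix one such arc $\tau$ and produce, for every sufficiently large $n$, a component of $\alpha_n^\ast\cap W$ isotopic to $\tau$ rel $\partial W$. I would realize $W$ with $\sigma$-geodesic boundary, pass to the universal cover $p\colon\mb{H}^2\to(Z,\sigma)$, and pick a lift $\tilde W$ of $W$ together with a lift $\tilde\tau$ of $\tau$ lying in $\tilde W$ with endpoints on two geodesic boundary components $\partial_1,\partial_2$ of $\tilde W$. The crucial geometric observation is that $\tilde W$ is geodesically convex in $\mb{H}^2$: its boundary is a disjoint union of complete geodesics and $\tilde W$ lies on one side of each, hence $\tilde W$ is an intersection of closed half-planes. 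In particular any complete geodesic entering $\tilde W$ does so in a single connected arc.

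To find such a crossing by a lift of $\alpha_n^\ast$, fix a base point $\tilde x_0$ in the interior of $\tilde\tau\cap\tilde W$ and set $x_0=p(\tilde x_0)\in Z$. The Hausdorff convergence $\alpha_n^\ast\to\lambda'\supset\tau$ yields points $y_n\in\alpha_n^\ast$ with $d_Z(y_n,x_0)\to 0$, and lifting these close to $\tilde x_0$ picks out a specific lift $\tilde\alpha_n^\ast$ of $\alpha_n^\ast$ through $\tilde y_n$. Since $\tilde\tau$ is a geodesic, Hausdorff closeness of $\alpha_n^\ast$ to $\lambda'$ on a neighborhood of $\tau$ forces the tangent direction of $\tilde\alpha_n^\ast$ at $\tilde y_n$ to converge to that of $\tilde\tau$ at $\tilde x_0$, so $\tilde\alpha_n^\ast\to\tilde\tau$ uniformly on any fixed compact interval containing $\tilde\tau\cap\tilde W$. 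For $n$ large, $\tilde\alpha_n^\ast$ must then cross $\partial_1$ and $\partial_2$ transversally near the endpoints of $\tilde\tau$, and by convexity of $\tilde W$ the intersection $\tilde\alpha_n^\ast\cap\tilde W$ is a single geodesic arc isotopic to $\tilde\tau$ rel $\partial\tilde W$. Projecting back yields the required component of $\alpha_n^\ast\cap W$. The main obstacle is this last upgrade, from Hausdorff convergence of closed sets in $Z$ to convergence of suitably chosen lifts in $\mb{H}^2$, and geodesicity of $\tilde\tau$ is exactly what makes it possible: a geodesic passing close to $\tilde x_0$ in a nearly matching direction must remain close to $\tilde\tau$ on compact subintervals.
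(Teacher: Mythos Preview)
Your argument is correct and follows the same overall strategy as the paper: invoke Theorem~\ref{thm:klarreich} to pass to Hausdorff convergence $\alpha_n^\ast\to\lambda'\supset\lambda$, realize $W$ with $\sigma$-geodesic boundary, and then argue that for each isotopy class of arc $\tau\subset\lambda\cap W$ the curve $\alpha_n^\ast$ eventually contains an arc in $W$ isotopic to $\tau$.

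The difference is in how that last step is carried out. The paper stays on the surface: it chooses $\ep>0$ so small that the $\ep$-neighborhoods of the chosen representative arcs are disjoint embedded strips in $W$, and then observes that Hausdorff convergence forces $\alpha_n^\ast$ to cross each strip from side to side, which immediately gives an arc of $\alpha_n^\ast\cap W$ in the correct isotopy class. You instead lift to $\mb{H}^2$, use convexity of the lift $\tilde W$, and argue via convergence of tangent directions that a suitable lift $\tilde\alpha_n^\ast$ crosses the two boundary geodesics $\partial_1,\partial_2$ near the endpoints of $\tilde\tau$. Your route is more explicit about why proximity implies the right isotopy class (and your remark that the leaf of $\lambda'$ through $x_0$ is unique is what makes the tangent-direction step go through), at the cost of some extra machinery; the paper's strip argument is shorter and avoids the universal cover altogether.
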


\begin{proof}
Represent $W$ as a subsurface of $(Z,\sigma)$ with totally geodesic boundary. Recall that, by Theorem \ref{thm:klarreich}, the convergence $\alpha_n\to\lambda$ implies that (the geodesic representatives of) the curves $\alpha_n$ converge in the Hausdorff topology to a lamination $\lambda'$ of $(Z,\sigma)$ which contains $\lambda$. Pick one representative in $\tau\subset\lambda\cap W$ for every isotopy class of arcs in $\lambda\cap W$. Pick $\ep>0$ so small that the $\ep$-neighborhoods of the arcs are pairwise disjoint embedded strips in $W$. By Hausdorff convergence $\alpha_n\to\lambda'$ and the fact that $\lambda\subset\lambda'$, we have that $\alpha_n$ must cross each of those strips from side to side for every $n$ large enough. By definition, this implies that $\pi_W(\lambda)\subset\pi_W(\alpha_n)$ for every $n$ large enough.   
\end{proof}
    
\section{Geometry and topology of 3-manifolds}
\label{sec:4}

In this section we review some basic 3-dimensional geometry and topology which is necessary to state Thurston's Hyperbolization Theorem (Theorem \ref{thm:hyperbolization}) and some deformation theory of hyperbolic 3-manifolds. 

We discuss in detail the topology of the drilled manifold that interests us, namely, 
\[
M=S\times[-1,1]-V,
\]
where $V$ is a tubular neighborhood of $\gamma_Y\times\{0\}$ and $\gamma_Y\subset S$ is the multicurve obtained by collapsing parallel components of $\partial Y$ to single curves ($Y\subset S$ is a non-annular proper connected subsurface). Crucially, we check that $(M,\partial V)$ satisfies the hyperbolization assumptions and, hence, it admits a hyperbolic structure. 

We then discuss end invariants and the parameterization of all the hyperbolic structures on $M$ to be able to guarantee the existence and uniqueness of one with prescribed end invariants $\nu^-,\nu^+$.

\subsection{3-dimensional topology}
We begin with some standard notions in 3-dimensional topology, namely irreducibility and atoroidality.

\begin{dfn}[Irreducible]
An orientable 3-manifold $M$ is {\em irreducible} if every 2-sphere $\mb{S}^2$ embedded in $M$ bounds an embedded 3-ball $\mb{S}^2=\partial\mb{B}^3$.     
\end{dfn}

\begin{dfn}[Atoroidal]
A compact orientable 3-manifold $M$ is {\em atoroidal} if every $\pi_1$-injective map from a 2-torus or a Klein bottle to $M$ is homotopic into $\partial M$.     
\end{dfn}

As we are interested in compact 3-manifolds admitting hyperbolic structures with cusps, we introduce also the notion of pared 3-manifolds. Heuristically speaking, the definition collects various necessary properties that a compact 3-manifold $M$ with some designated parabolic locus $P\subset \partial M$ must have in order to admit a hyperbolic metric for which every component of $P$ represents a cusp. 

\begin{dfn}[Pared 3-Manifold]
\label{def:pared}
Let $M$ be a compact orientable irreducible 3-manifold with non-empty boundary $\partial M$ and let $P\subset\partial M$ be a subsurface whose components are $\pi_1$-injective annuli and 2-tori. We say that $(M,P)$ is a {\em pared 3-manifold} if the following conditions hold.   
\begin{enumerate}
    \item{Every non-cyclic abelian subgroup of $\pi_1(M)$ is conjugate into the fundamental group of a component of $P$.} 
    \item{Every $\pi_1$-injective map $\phi:(A,\partial A)\to(M,P)$ is homotopic, as a map of pairs, to a map with image in $P$. Here $A=\mb{S}^1\times[0,1]$.}
\end{enumerate}
\end{dfn}

Having talked about cusps, we next expand this topic and discuss the thick-thin decomposition of hyperbolic 3-manifolds as we have done for hyperbolic surfaces.

\subsection{Tubes and cusps}
The thin parts in dimension 3 are still very well behaved and admit a nice description which we now review.

\begin{dfn}[Margulis Tubes and Cusp Neighborhoods]
Let $\ep_0>0$ be a Margulis constant for dimension 3. If $(N,\rho)$ is a hyperbolic 3-manifold, then, by the thick-thin decomposition (see \cite[Chapter 4]{Martelli}), every connected component of the {\em $\ep_0$-thin region}
\[
(N,\rho)_{\ep_0}:=\{x\in N\,|\,{\rm inj}_x(N)<\ep_0\}
\]
can be of three types.  
\begin{itemize}
    \item{A {\em $\ep_0$-Margulis tube} around a simple closed geodesic $\gamma\subset N$ of length $\ell_\rho(\gamma)<2\ep_0$, a component which we denote by $\mb{T}_{\ep_0}(\gamma)$ or $\mb{T}_{\ep_0}(\gamma,N)$ if we need to specify the ambient manifold. Topologically, it is a tubular neighborhood of $\gamma$ (a solid torus diffeomorphic to $\mb{D}^2\times\mb{S}^1$).}
    \item{A {\em $\ep_0$-cuspidal neighborhood} of a rank one cusp of $N$ which we denote by $\mb{C}_{\ep_0}(\gamma)$ or $\mb{C}_{\ep_0}(\gamma,N)$ if we need to specify the ambient manifold. Topologically, it is diffeomorphic to an infinite annulus $\mb{S}^1\times\mb{R}$ times $\mb{R}$ and $\gamma$ denotes the homotopy class of the curve winding once around the annulus $\mb{S}^1\times\mb{R}$.}
    \item{A {\em $\ep_0$-cuspidal neighborhood} of a rank two cusp of $N$ which we denote by $\mb{C}_{\ep_0}(T)$ or $\mb{C}_{\ep_0}(T,N)$ if we need to specify the ambient manifold. Topologically, it is diffeomorphic to a 2-torus $\mb{T}$ times $\mb{R}$ and $T$ denotes the homotopy class of the torus fiber $\mb{T}\times\{0\}$. The metric of $N$ restricts to a flat metric on the boundary of the cusp neighborhood $\partial\mb{C}_{\ep_0}(T)$.}
\end{itemize}  
Again, the complement of the $\ep_0$-thin region is the {\em $\ep_0$-thick part}.
\end{dfn}

\subsection{Hyperbolic 3-manifolds and Kleinian groups}
Next we discuss the relation between hyperbolic manifolds and Kleinian groups with the goal of describing the deformation theory of hyperbolic structures on a fixed manifold and introducing some important objects and definitions that will play a role in later sections. We begin with the following definition.

\begin{dfn}[Kleinian Group]
A Kleinian group is a discrete subgroup of $\Gamma<{\rm Isom}^+(\mb{H}^3)$. In this article we will always assume that $\Gamma$ is torsion-free and not virtually abelian.   
\end{dfn}

If we have a complete orientable hyperbolic 3-manifold $M$, its universal cover is always isometric to $\mb{H}^3$ and a choice of an isometry $\tilde{M}\to\mb{H}^3$ conjugates the deck group action of $\pi_1(M)$ to the one of a discrete torsion free subgroup $\Gamma<{\rm Isom}^+(\mb{H}^3)$. The isomorphism $\pi_1(M)\to\Gamma$ is the so-called {\em holonomy representation}. With the above notations $M$ is isometric to $\mb{H}^3/\Gamma$.

A Kleinian group acts on hyperbolic 3-space but also on the compactification $\mb{H}^3\cup\partial\mb{H}^3$. The action there is no more properly discontinuous and free but it splits into two parts. 

\begin{dfn}[Limit Set and Domain of Discontinuity]
Let $\Gamma<{\rm Isom}^+(\mb{H}^3)$ be a Kleinian group. The {\em limit set} of $\Gamma$ is the set $\Lambda\subset\partial\mb{H}^3$ of accumulation points in $\partial\mb{H}^3$ of an (any) orbit $\Gamma\cdot o$ (where $o\in\mb{H}^3$ is any chosen point). The {\em domain of discontinuity} of $\Gamma$ is the complement of the limit set $\Omega:=\partial\mb{H}^3-\Lambda$. It is a standard fact that the Kleinian group $\Gamma$ acts freely and properly discontinuously on $\mb{H}^3\cup\Omega$. The quotient $M_\Gamma=(\mb{H}^3\cup\Omega)/\Gamma$ is a 3-manifold with boundary $\partial M=\Omega/\Gamma$. 
\end{dfn}

The last Kleinian object that we introduce is the so-called {\em convex core}, which is a canonical convex submanifold of $\mb{H}^3/\Gamma$ associated with the limit set. 

\begin{dfn}[Convex Core]
Let $\Gamma<{\rm Isom}^+(\mb{H}^3)$ be a Kleinian group. The {\em convex core} of the associated 3-manifold $M_\Gamma$ is the quotient ${\rm CC}(M_\Gamma):={\rm CH}(\Lambda)/\Gamma$ where ${\rm CH}(\Lambda)\subset\mb{H}^3$ is the convex hull of the limit set $\Lambda$. It is the smallest closed $\Gamma$-invariant convex subset of $\mb{H}^3$. 
\end{dfn}

\subsection{Hyperbolic structures}
With the above notations we can now define a hyperbolic structure on a compact 3-manifold $M$ with designated parabolic locus $P\subset \partial M$.

\begin{dfn}[Hyperbolic Structure]
Let $(M,P)$ be a pared 3-manifold. A hyperbolic structure on $(M,P)$ is a hyperbolic metric $({\rm int}(M),\rho)$ with associated holonomy representation $\phi:\pi_1(M)\to{\rm Isom}^+(\mb{H}^3)$ such that the holonomy $\phi(\pi_1(P_j))$ is parabolic for every component $P_j\subset P$. Note that we have an orientation preserving isometry $\Phi:{\rm int}(M)\to\mb{H}^3/\phi(\pi_1(M))$.    
\end{dfn}

\begin{dfn}[Geometrically Finite]
A hyperbolic structure on $(M,P)$ with holonomy $\phi$ is geometrically finite if the isometry $\Phi:{\rm int}(M)\to\mb{H}^3/\phi(\pi_1(M))$ extends to a diffeomorphism $M-P\to(\mb{H}^3\cup\Omega)/\phi(\pi_1(M))$.   
\end{dfn}

Thurston's Hyperbolization Theorem states that the obvious topological obstructions to the existence of a hyperbolic metric with specified parabolic locus given in the definition of a pared 3-manifold $(M,P)$ are in fact the only obstructions.

\begin{thm}[{Thurson's Hyperbolization Theorem, see \cite[Theorem 1.43]{Kapovich:hyperbolization}}]
\label{thm:hyperbolization}
Every oriented irreducible atoroidal pared 3-manifold $(M,P)$ with non-empty boundary has a geometrically finite hyperbolic structure. 
\end{thm}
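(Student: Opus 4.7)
The plan is to follow Thurston's classical hyperbolization strategy for Haken manifolds as presented systematically in Kapovich's monograph, proceeding by induction along a hierarchy of essential surfaces cut with respect to the parabolic locus.

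First, since $\partial M\neq\emptyset$ and $M$ is irreducible, a standard argument produces a properly embedded essential surface $\Sigma\subset M$ (e.g.\ a component of $\overline{\partial M\setminus P}$ when it is incompressible, or an essential surface obtained via Haken's normal surface machinery otherwise), so that $(M,P)$ is a Haken pared manifold. Iterating, one obtains a \emph{Haken hierarchy}
\[
(M,P)=(M_0,P_0)\rightsquigarrow(M_1,P_1)\rightsquigarrow\cdots\rightsquigarrow(M_n,P_n),
\]
where $M_{i+1}$ is cut from $M_i$ along an essential surface $\Sigma_i$ compatible with the pared structure (annuli parallel to $\partial\Sigma_i\cap P_i$ are absorbed into $P_{i+1}$), and the terminal piece $(M_n,P_n)$ is a disjoint union of pared balls, pared handlebodies, or interval bundles, each of whose hyperbolic structures can be built by hand from Schottky or Fuchsian groups.

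The inductive step reverses one cut: given a geometrically finite structure on $(M_{i+1},P_{i+1})$, we want one on $(M_i,P_i)$. The key observation is that a geometrically finite structure on $(M_i,P_i)$ is equivalent to a geometrically finite structure on $(M_{i+1},P_{i+1})$ whose two conformal boundary components (the two copies of $\Sigma_i$) are identified by the gluing map $\tau$. Introducing the \emph{skinning map} $\sigma:\mathrm{Teich}(\partial_+ M_{i+1})\to\mathrm{Teich}(\partial_- M_{i+1})$, which records how a deformation of the conformal boundary on one side of $\Sigma_i$ is reflected on the other side through the hyperbolic manifold, the task reduces to finding a fixed point of $\tau\circ\sigma$ on Teichm\"uller space.

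The main obstacle, as in Thurston's original argument, is producing this fixed point. In the acylindrical case, McMullen's theorem asserts that $\sigma$ is strictly contracting for the Teichm\"uller metric, so the iteration of $\tau\circ\sigma$ converges. In the general pared setting one must separate the acylindrical sub-pieces from the interval-bundle sub-pieces; for the latter one invokes Thurston's double limit theorem to prevent the iterates from escaping to infinity in moduli space, and then upgrades the limiting algebraic structure to a geometrically finite geometric one via tameness and Anosov-type continuity arguments. The analysis ultimately rests on controlling sequences of quasi-Fuchsian groups via train tracks and subsurface projections, precisely the kind of input whose quantitative refinements underlie the effective statements pursued elsewhere in this paper.
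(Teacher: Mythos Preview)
The paper does not prove this theorem at all: it is quoted as a black box with a citation to Kapovich's monograph \cite[Theorem 1.42]{Kapovich:hyperbolization}, and is used only to guarantee the existence of a geometrically finite structure on the drilled manifold $(M,\partial V)$. So there is no ``paper's own proof'' to compare against.

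Your sketch is a reasonable high-level outline of the classical Thurston argument (Haken hierarchy, skinning map, fixed-point via McMullen contraction in the acylindrical case and the double limit theorem otherwise). Two remarks, though. First, the terminal stage of a Haken hierarchy consists of $3$-balls, not the mixed list you give; the interval-bundle pieces you mention arise instead from the characteristic submanifold/JSJ decomposition that one performs before running the gluing step, in order to separate the acylindrical window from the $I$-bundle window. Second, your last sentence is off: the classical hyperbolization proof predates the Masur--Minsky machinery and does not use subsurface projections or train-track combinatorics in any essential way; the bounded-image theorem and the double limit theorem are proved by quite different means (pleated surfaces, measured laminations, compactness of projective measured lamination space). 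The effective subsurface-projection technology in this paper is orthogonal to, not a refinement of, the ingredients in Thurston's hyperbolization.
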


\subsection{Hyperbolization}
We now discuss in details the 3-manifold that interests us.

Consider the interval bundle $S\times[-1,1]$. Let $\gamma_Y\subset S$ be the multicurve obtained by collapsing parallel components of $\partial Y$ to a single curve (where $Y\subset S$ is a proper essential non-annular connected subsurface). Let $V$ be a small tubular neighborhood of $\gamma_Y\times\{0\}$. Set
\[
M:=S\times[-1,1]-V.
\]

We have the following.

\begin{lem}
\label{lem:hyperbolization1}
$(M,\partial V)$ is a pared 3-manifold.    
\end{lem}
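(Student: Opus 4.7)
The plan is to verify the four conditions of Definition \ref{def:pared} for the pair $(M,\partial V)$, proceeding condition by condition and using throughout two key structural facts about $\gamma_Y$: every component is an essential simple closed curve in $S$, and distinct components are pairwise non-isotopic in $S$ (this is precisely the point of the collapsing procedure used to define $\gamma_Y$ from $\partial Y$).

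First I would check that $M$ is compact, orientable (inherited from $S\times[-1,1]$), and has non-empty boundary $\bigl(S\times\{-1\}\bigr)\sqcup\bigl(S\times\{1\}\bigr)\sqcup\partial V$. For irreducibility I would use that $S\times[-1,1]$ itself is irreducible: any embedded 2-sphere $\Sigma\subset M\subset S\times[-1,1]$ bounds a 3-ball $B$ in the product, and if $B$ contained a component of $V$ then the core of that component would be null-homotopic in $S$, contradicting essentiality of the corresponding component of $\gamma_Y$, so $B\subset M$. The parabolic locus $\partial V$ is a disjoint union of 2-tori, one per component of $\gamma_Y$, and each such torus $T_i$ is $\pi_1$-injective in $M$ because its core $\gamma_i$ is essential in $S$ while its meridian is nontrivial in $\pi_1(M)$ by linking with $\gamma_i$.

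Next, for the non-cyclic abelian subgroup condition, I would show that every $\mathbb{Z}^2\le\pi_1(M)$ is peripheral: such a subgroup is realized by an essential embedded torus $T\subset M$, which must compress in $S\times[-1,1]$ because $\pi_1(S)$ contains no $\mathbb{Z}^2$; a compressing disk necessarily meets $V$, and a standard surgery argument combined with irreducibility of $S\times[-1,1]$ forces $T$ to be boundary-parallel into some $\partial V_i$. For the annulus condition, given a $\pi_1$-injective map of pairs $\phi\colon(A,\partial A)\to(M,\partial V)$ with boundary circles landing on components $T_{i_1},T_{i_2}$ of $\partial V$, I would cap off $\phi$ by filling in the adjacent solid tori $V_{i_1},V_{i_2}$ to obtain an essential map into $S\times[-1,1]$; essential annuli in $S\times[-1,1]$ are homotopic to vertical annuli $\alpha\times[-1,1]$ for simple closed $\alpha\subset S$, so the two boundary circles of $\phi$ become freely homotopic in $S$ to powers of $\gamma_{i_1}$ and $\gamma_{i_2}$ respectively. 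The pairwise non-isotopy of the components of $\gamma_Y$ forces $i_1=i_2$, and then the abelian subgroup condition, already verified, lets me push $\phi$ entirely into $T_{i_1}$.

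The main obstacle is the annulus condition: this is where the non-isotopy of the components of $\gamma_Y$ truly enters, and it is precisely what rules out unexpected essential annuli connecting distinct tori of $\partial V$. Everything else is fairly standard 3-manifold bookkeeping, but the annulus step relies crucially on the structural choice made in passing from $\partial Y$ to $\gamma_Y$; without collapsing parallel components of $\partial Y$ one would introduce spurious essential annuli between parallel tori and the pair $(M,\partial V)$ would fail to be pared.
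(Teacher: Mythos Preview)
Your overall structure matches the paper's, and your irreducibility argument and the reduction of the annulus condition to the case $i_1=i_2$ are both fine. But the last sentence of your annulus argument is a genuine gap: knowing that both boundary circles of $\phi$ lie on the same torus $T_{i_1}$, together with condition~(1) on $\mathbb{Z}^2$ subgroups, does \emph{not} by itself let you homotope $\phi$ into $T_{i_1}$. The obstruction to such a homotopy is an element $\delta\in\pi_1(M)$ (go across $A$, come back through $T_{i_1}$) that centralizes the core class $\gamma$; you need the centralizer of $\gamma$ to be exactly $\pi_1(T_{i_1})$. Condition~(1) only tells you that if $\langle\gamma,\delta\rangle\cong\mathbb{Z}^2$ then it is conjugate into \emph{some} peripheral torus, and even granting it is $T_{i_1}$ you would still need malnormality of $\pi_1(T_{i_1})$ to conclude $\delta\in\pi_1(T_{i_1})$. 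None of this is automatic before hyperbolicity is established.

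The paper closes this gap topologically rather than algebraically: it first invokes the Annulus Theorem to make $\phi$ embedded, then cuts $M$ along $\phi(A)$ to produce two new boundary tori $T',T''$, shows they are incompressible, and uses the already-proved geometric atoroidality plus Waldhausen's classification of incompressible surfaces in $T^2\times[-1,1]$ to force both to be parallel to $\partial V_0$, yielding a contradiction. Relatedly, your atoroidality sketch (``a $\mathbb{Z}^2$ is realized by an embedded torus, which compresses in $S\times[-1,1]$, and a standard surgery argument\ldots'') hides real work: one needs the Torus Theorem (hence non-Seifert-fibered, which you do not check), and the passage from ``bounds a solid torus in $S\times[-1,1]$ containing exactly one component of $\gamma_Y$'' to ``boundary-parallel in $M$'' again requires Waldhausen. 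The paper carries out both of these steps in detail, and the atoroidality is then an essential input to the annulus argument, not a consequence of it.
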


\begin{proof}
We prove that $(M,\partial V)$ has the following properties.

{\bf Irreducibility}. As $S\times[-1,1]$ is irreducible (its universal covering is $\mb{R}^2\times[-1,1]$), every embedded 2-sphere $\mb{S}^2\subset M$ bounds a 3-ball $\mb{B}^3$ in $S\times[-1,1]$. Such a 3-ball cannot contain any component of $V$ as each of them is $\pi_1$-injective in $S\times[-1,1]$, thus $\mb{B}^3\subset M$.

{\bf Boundary incompressible}. Consider a component $V_0$ of $V$ with core curve $\gamma_0$ and suppose that $\partial V_0$ is compressible in $M$. Then, by Dehn's Lemma (see \cite{Deh,Pap}), there exists an essential simple closed curve $\alpha\subset\partial V_0$ that bounds a properly embedded disk $D$ in $M$. Note that $\alpha$ must be homotopically trivial also in $V$ as $V$ is $\pi_1$-injective in $S\times[-1,1]$ and $\alpha$ is null-homotopic in $S\times[-1,1]$. Again, by Dehn's Lemma, $\alpha$ must be the meridian of $\gamma_0$, that is $\alpha$ bounds a properly embedded disk $D'\subset V_0$ intersecting $\gamma_0$ once. So we obtained a 2-sphere $D\cup D'\subset S\times[-1,1]$ intersecting $\gamma_0$ exactly once. Such a sphere cannot bound a ball in $S\times[-1,1]$ and this is not possible as $S\times[-1,1]$ is irreducibile.   

{\bf Non-Seifert fibered}. Each boundary component of a Seifert fibered 3-manifold is either a 2-torus or a Klein bottle (the only surfaces that can be foliated by circles). As the boundary of $M$ contains two components that are orientable surfaces of genus at least 2, it cannot be Seifert fibered.

We next discuss {\bf atoroidality}. By the Torus Theorem (which has a long history see \cite{Feu1,Feu2,JS,Joh79,Sco}, see instead \cite[Theorem 1.40]{Kapovich:hyperbolization} for the formulation we use here), an orientable irreducible non-Seifert fibered compact 3-manifold with a $\mb{Z}^2$ subgroup which is not conjugate in the fundamental group of a boundary component contains a $\pi_1$-injective 2-torus or Klein bottle which is not parallel to the boundary. Note that since we are assuming that the manifold is orientable, the boundary of a tubular neighborhood of a $\pi_1$-injective Klein bottle which is not isotopic into the boundary is a $\pi_1$-injective torus which is not isotopic into the boundary. So, in order to prove that Property (1) of Definition \ref{def:pared} holds for $M$ it is enough to show that $M$ is {\em geometrically atoroidal}, that is, it does not contain $\pi_1$-injective embedded tori non isotopic to the boundary. 

{\bf Geometric atoroidality}. As $S\times[-1,1]$ is atoroidal ($\pi_1(S\times[-1,1])$ does not contain any $\mb{Z}^2$ subgroup), every embedded 2-torus $\mb{T}^2\subset M$ bounds a solid torus $\mb{S}^1\times\mb{D}^2\subset S\times[-1,1]$. If $\mb{S}^1\times\mb{D}^2$ does not contain any component of $\gamma_Y$ then it is contained in $M$ and there is nothing to add. If $\mb{S}^1\times\mb{D}^2$ contains at least one component then it contains exactly one component of $\gamma_Y\times\{0\}$ as every component is $\pi_1$-injective and primitive in $\pi_1(S\times[-1,1])=\pi_1(S)$ and different components of $\gamma_Y\times\{0\}$ are not homotopic. Let $\gamma\subset\gamma_Y\times\{0\}$ be a component contained in $\mb{S}^1\times\mb{D}^2$. Up to isotopy, we can assume that small tubular neighborhoods $U$ of $\gamma$ and $\mb{S}^1\times\mb{D}_r$ of $\mb{S}^1\times\{o\}$ are disjoint (where $o$ is the origin in $\mb{D}^2$ and $\mb{D}_r$ is a disk of radius $r$ around it). We now show that the boundary $\partial U$ is parallel to $\mb{T}^2$. In order to do so, we first argue that it is $\pi_1$-injective in $M$. Suppose this is not the case. Then, by Dehn's Lemma, there exists an essential curve $\alpha\subset\partial U$ that bounds a properly embedded disk $D$ in $M-U$. Let $\alpha=pm+ql$ in $\pi_1(\partial U)=m\mb{Z}\oplus l\mb{Z}$ where $m$ is the meridian of $U$ (the homotopy class of the curve of $\partial U$ bounding a disk in $U$) and $l$ is a longitude (it maps to $\gamma$ under the map $\pi_1(\partial U)\to\pi_1(S\times[-1,1])$). The image of $\alpha$ in $\pi_1(S\times[-1,1])$ is $\gamma^q$. Since $\alpha$ bounds a disk it is null-homotopic in $M$, hence $q=0$ and $\alpha=pm$. As $\alpha$ is also simple, we have $p=\pm1$ and $\alpha=\pm m$. Recall that $\pm m$ bounds a properly embedded disk $D_U\subset U$. Gluing the disks $D\subset M-U$ to $D_U\subset U$ along their common boundary $\alpha=D\cap D_U$ we obtain a 2-sphere $S^2\subset S\times[-1,1]$ that meets $\gamma$ in exactly one point, namely $\gamma\cap D_U$. However, the manifold $S\times[-1,1]$ does not contain any such sphere (for example this sphere is not homologically trivial and cannot bound a 3-ball violating the irreducibility). We conclude that $\partial U$ must be $\pi_1$-injective in $M$. Now recall that $\partial U\subset\mb{S}^1\times\mb{D}^2-\mb{S}^1\times\mb{D}_r$ which is diffeomorphic to $\mb{T}^2\times[-1,1]$. By a classical theorem of Waldhausen \cite[Lemma 5.3]{W68}, the only $\pi_1$-injective closed surfaces in $\mb{T}^2\times[-1,1]$ are the ones that are isotopic to level surfaces. This shows that $\mb{T}^2$ is isotopic to $\partial U$.

{\bf Essential annuli}. Lastly, let us check Property (2) of Definition \ref{def:pared}. Let $\phi:(A,\partial A)\to (M,P)$ be a $\pi_1$-injective map where $A=\mb{S}^1\times[0,1]$. Suppose by contradiction that it is not homotopic into $P$ relative to the boundary. Then, by the Annulus Theorem \cite{Feustel77} we can assume that $\phi$ is an embedding. Let $V_0,V_1$ be the components of $V$ containing $\alpha_0=\phi(\partial_0A),\alpha_1=\phi(\partial_1A)$ (possibly the same component). Denote by $\gamma_0,\gamma_1$ the core curves of $V_0,V_1$. As $\pi_1(V_j)=\mb{Z}\gamma_j$, we have that $\alpha_j$ is homotopic to a $k_j$-th power of $\gamma_j$ within $V_j$ for $j=0,1$. Since $\alpha_0,\alpha_1$ are homotopic in $M$, we deduce that $\gamma_0^{k_0},\gamma_1^{k_1}$ are homotopic in $S\times[-1,1]$, and, hence, also in $S$. This implies that $\gamma_0=\gamma_1$ and $V_0=V_1$. Since $\phi$ is an embedding, $\alpha_0,\alpha_1\subset\partial V_0$ are disjoint essential curves on the 2-torus $\partial V_0$. In particular, they divide $\partial V_0$ into two annuli $A',A''$. Cut $M$ along $A$ to obtain a 3-manifold $M'$ with two distinguished boundary components $T'=A\cup A',T''=A\cup A''$ which are 2-tori. None of them can be compressible in $M'$ otherwise $M'$ would be a solid torus. As $A$ is incompressible in $M$ and $M$ is irreducible, it is well-known that being incompressible in $M-A$ is equivalent to being incompressible in $M$ (see for example \cite[Proposition 9.4.9]{Martelli}), in particular both $T',T''\subset M$ are incompressible. By atoroidality of $M$, necessarily $T',T''$ are isotopic $\partial V_0$. By classical results of Waldhausen \cite[Lemma 5.3]{W68}, it follows that $T',T''$ are parallel to $\partial V_0$, but this contradicts the fact that $A$ cannot be homotoped into $\partial V_0$ relative to the boundary.  
\end{proof}

\begin{cor}
\label{cor:geom finite}
$(M,\partial V)$ admits a geometrically finite structure.
\end{cor}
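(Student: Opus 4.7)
The plan is to simply invoke Thurston's Hyperbolization Theorem (Theorem \ref{thm:hyperbolization}), all of whose hypotheses have already been verified. By Lemma \ref{lem:hyperbolization1} the pair $(M,\partial V)$ is a pared 3-manifold, and by construction its boundary is non-empty: it contains the two surface components $S\times\{-1\}$, $S\times\{1\}$, together with the collection of 2-tori forming $\partial V$. Since $M$ inherits an orientation from $S\times[-1,1]$, the oriented pared manifold $(M,\partial V)$ satisfies all hypotheses of Theorem \ref{thm:hyperbolization}, which directly yields a geometrically finite hyperbolic structure.

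There is essentially no obstacle remaining; the entire content was absorbed into Lemma \ref{lem:hyperbolization1}, where irreducibility, boundary incompressibility, geometric atoroidality, the non-Seifert-fibered condition, and the essential-annulus condition were each checked by hand. Once those are in place, Property (1) of Definition \ref{def:pared} follows from the Torus Theorem (recall $\pi_1(M)$ contains $\mathbb{Z}^2$'s, all arising from the tori $\partial V$), and Property (2) is exactly the essential-annulus step. Thus $(M,\partial V)$ is a legitimate pared manifold, and Theorem \ref{thm:hyperbolization} produces the desired geometrically finite hyperbolic metric.
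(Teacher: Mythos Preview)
Your proposal is correct and matches the paper's approach exactly: the corollary follows immediately from Lemma \ref{lem:hyperbolization1} together with Theorem \ref{thm:hyperbolization}, and the paper in fact states it without proof for precisely this reason. Your second paragraph recapping the content of Lemma \ref{lem:hyperbolization1} is accurate but redundant.
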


\begin{remark}
While we chose to follow a purely topological approach to prove Corollary \ref{cor:geom finite} (relying on Theorem \ref{thm:hyperbolization}), it is also possible to obtain the same result with various other methods based on the deformation theory of hyperbolic structures on $S\times[-1,1]$. We briefly discuss a couple. 

First, by work of Bonahon and Otal \cite{BoO}, it is possible to equip $S\times(-1,0]-\gamma_Y\times\{0\}$ with a complete hyperbolic structure with totally geodesic boundary and rank one cusps at $\gamma_Y\times\{0\}$. Doubling such structure along $(S-\gamma_Y)\times\{0\}$, we obtain a complete hyperbolic metric on $S\times(-1,1)-\gamma_Y\times\{0\}$ (with little effort, in the construction one can also ensure that the resulting structure is geometrically finite). 

Second, one can obtain a geometrically finite hyperbolic structure on $S\times[-1,1]-\gamma_Y\times\{0\}$ on the boundary of the deformation space of geometrically finite hyperbolic structure $S\times[-1,1]$. For example, Brock \cite{Bro} shows that starting with any point $X$ in the Teichmüller space $\T(S)$ of $S$ the sequence of quasi-Fuchsian manifolds $Q(X,D_{\gamma_Y}^nX)$ with end invariants given by $X,D_{\gamma_Y}^nX\in\T(S)$ where $D_{\gamma_Y}$ is the (right) Dehn twist around $\gamma_Y$ (see Section \ref{subsec:end inv} below for a brief description of the end invariants) converges geometrically to a geometrically finite hyperbolic structure on $S\times[-1,1]-\gamma_Y\times\{0\}$.
\end{remark}

\subsection{End invariants and parameterization}
\label{subsec:end inv}
Once we know that $(M,\partial V)$ has a geometrically finite structure, the combination of various fundamental results in the deformation theory of hyperbolic 3-manifolds (most notably the solution of the Ending Lamination Conjecture \cite{M10,BrockCanaryMinsky:ELC2} and realization results such as \cite{Oh}, \cite[Theorem 1.3]{NS12}) allows us to parameterize all hyperbolic structures on $(M,\partial V)$ up to isometries homotopic to the identity in terms of the so-called {\em end invariants} which we now describe in the restricted setting of our interest. For a more in-depth discussion we refer to \cite[Section 2]{M10}.

\begin{dfn}[End Invariants]
Let $(\mb{M},\mb{P})$ be either $(S\times[-1,1],\emptyset)$ or $(S\times[-1,1]-V,\partial V)$. Consider a hyperbolic structure on the pared 3-manifold $(\mb{M},\mb{P})$. 

Classical works of Scott \cite{Sco}, McCullough \cite{CMcC}, Kulkarni and Shalen \cite{KS89} on the (relative) compact core, combined with analysis of the structure of the ends carried out by Bonahon \cite{Bo86} and Canary \cite{Can93} (building on Thurston's works \cite{ThuNotes}) provide us the following picture.

There are finitely many rank one cusps in $\mb{M}$ and each of them corresponds to a distinguished essential non-peripheral simple closed curve $\gamma\subset\partial\mb{M}-\mb{P}$. The component of the $\ep_0$-thin part corresponding to such a cusp is denoted by $\mb{C}_{\ep_0}(\gamma)$, its fundamental group $\pi_1(\mb{C}_{\ep_0}(\gamma))$ is conjugate to the cyclic subgroup of $\pi_1(\mb{M})$ generated by $\gamma$. 

Simple closed curves $\gamma,\gamma'\subset \partial\mb{M}-\mb{P}$ corresponding to different cusp components $\mb{C}_{\ep_0}(\gamma),\mb{C}_{\ep_0}(\gamma')$ are disjoint. Thus, the rank one cusps define two multicurves $P^-$ and $P^+$, one on $S\times\{-1\}$ and one on $S\times\{1\}$ (the so-called {\em accidental parabolics)}. Denote by $A^-,A^+$ some tubular neighborhoods of such multicurves. 

For every complementary component $W\subset S-A^-,S-A^+$, the hyperbolic structure on $(\mb{M},\mb{P})$ determines either a unique finite area hyperbolic metric on it $(W,\sigma_W)$ or a unique minimal filling lamination $\lambda_W\in\mc{EL}(W)$. 

The collection of $A^-$ (resp. $A^+$) and the parameters for $W$ when the subsurface varies among the components of $S-A^-$ (resp. $S-A^+$) is the so-called {\em negative} (resp. {\em positive}) {\em end invariant} $\nu^-$ (resp. $\nu^+$). 
\end{dfn}

We now state a fundamental relation between end invariants and the geometry of $M$. The first part of the following theorem is due to Ahlfors \cite{Ahl} and the second is due to Thurston \cite{ThuNotes} and Bonahon \cite{Bo86}.  

\begin{thm}
\label{thm:end to length}
Consider a hyperbolic structure on the pared 3-manifold $(M,\partial V)$. Let $P^\pm\subset S$ be the accidental parabolics of the structure. Consider a component $W$ of $S-P^\pm$. Then the following holds.
\begin{itemize}
    \item{If the invariant associated to $W$ is a finite area hyperbolic metric $(W,\sigma_W)$ then we have $\ell_M(\alpha)\le 2\ell_{\sigma_W}(\alpha)$ for every essential curve $\alpha\subset W$.}
    \item{If the invariant associated to $W$ is a minimal filling lamination $\lambda_W\in\mc{EL}(W)$ then there exists a sequence of simple closed curves $\alpha_n\in\mc{C}(W)$ converging to $\lambda_W$ in the boundary of the curve graph and such that $\ell_M(\alpha_n)\le 2\cdot{\rm arccosh}(|\chi(W)|+1)$.}
\end{itemize}
\end{thm}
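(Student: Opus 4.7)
The plan is to use pleated surfaces in both cases. For each component $W$ of $S-A^\pm$, the end of $M$ facing $W$ is either geometrically finite (first case) or geometrically infinite with ending lamination $\lambda_W$ (second case).

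For the first case, the key tool is the nearest-point retraction from the conformal boundary to the boundary of the convex core. Identifying the conformal boundary component corresponding to $W$ with $(W,\sigma_W)$ via the Poincar\'e metric, we get a natural map $r\colon (W,\sigma_W)\to \partial \mathrm{CC}(M)$ which is $2$-Lipschitz (this is Ahlfors' Lemma as formulated in \cite{Otal}, Lemma 5.1.1). Since $\mathrm{CC}(M)$ is convex, the geodesic representative in $M$ of any closed curve is shorter than any freely homotopic curve on $\partial \mathrm{CC}(M)$. Applied to $\alpha$, this yields $\ell_M(\alpha)\le \ell_{\partial\mathrm{CC}}(r(\alpha))\le 2\,\ell_{\sigma_W}(\alpha)$.

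For the second case, I would use the realization theory of Thurston and Bonahon. Since the $W$-facing end is degenerate, there is a sequence of pleated surfaces $f_n\colon (W,\sigma_n)\to M$, each homotopic to the inclusion of $W$, whose images exit the end (this follows from Thurston's realization theorem applied to an auxiliary sequence of simple closed curves approaching $\lambda_W$ in $\partial\mathcal{C}(W)$, using Klarreich's theorem to ensure such a sequence exists). Each $(W,\sigma_n)$ is a complete finite-area hyperbolic surface, so by Lemma \ref{lem:short gen}(1) it contains a simple closed geodesic $\alpha_n$ with $\ell_{\sigma_n}(\alpha_n)\le 2\,\mathrm{arccosh}(|\chi(W)|+1)$. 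Pleated surfaces are $1$-Lipschitz, hence the geodesic representative of $\alpha_n$ in $M$ satisfies $\ell_M(\alpha_n)\le \ell_{\sigma_n}(\alpha_n)\le 2\,\mathrm{arccosh}(|\chi(W)|+1)$.

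It remains to argue $\alpha_n\to \lambda_W$ in $\partial\mathcal{C}(W)$, and this is the main obstacle. The idea is that the $\ep_0$-thick part of the pleated surfaces $f_n(\sigma_n)$ exits the end (up to passing to a subsequence, since its diameter in $M$ is controlled by Margulis-type bounds and the topology of $W$), and the geodesic representative of $\alpha_n$ sits in a bounded neighborhood of the thick part of $f_n(\sigma_n)$. By Bonahon's characterization of the ending lamination of a degenerate end, every Hausdorff accumulation point on a fixed reference hyperbolic structure of a sequence of simple closed geodesics whose representatives exit the end must contain $\lambda_W$. Since $\lambda_W$ is minimal and filling, applying Theorem \ref{thm:klarreich} converts Hausdorff convergence (to a lamination containing $\lambda_W$) into convergence in the Gromov boundary $\partial\mathcal{C}(W)$, giving $\alpha_n\to\lambda_W$. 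The subtle step is keeping track of the fact that the short curves $\alpha_n$ genuinely escape the end rather than remaining in a compact region, which requires using that pleated surfaces have bounded diameter thick parts and that the pleating loci themselves exit the end.
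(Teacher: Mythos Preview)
The paper does not actually prove this theorem: it merely attributes the first bullet to Ahlfors (via \cite[Lemma 5.1.1]{Otal}) and the second to Thurston \cite{ThuNotes} and Bonahon \cite{Bo86}, and moves on. Your sketch follows exactly the standard arguments behind those attributions---the $2$-Lipschitz nearest-point retraction for the geometrically finite case, and Bonahon's exiting pleated surfaces combined with a short-curve/area bound for the degenerate case---so there is nothing to compare beyond saying you have reconstructed the intended proofs.

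One small point worth tightening in the degenerate case: Lemma~\ref{lem:short gen}(1) produces a simple geodesic \emph{loop} based at a point, which a priori could be peripheral in $(W,\sigma_n)$ (homotopic into a cusp coming from an accidental parabolic or from $\partial Y$). You need $\alpha_n\in\mc{C}(W)$, i.e.\ non-peripheral. This is easily handled---either observe that a peripheral loop of bounded length forces the basepoint into a cusp neighborhood, which you can avoid by choosing the basepoint in the thick part, or invoke Lemma~\ref{lem:short gen}(2) to get two loops generating a free group and take the non-peripheral one---but it should be said. Your final paragraph on why $\alpha_n\to\lambda_W$ is the genuinely delicate step and you have correctly identified both the mechanism (geodesic representatives exit the end, Bonahon's characterization of the ending lamination, Klarreich's boundary identification) and the subtlety (bounded-diameter thick parts ensure the $\alpha_n$ actually escape).
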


Combining the solution of the Ending Lamination Conjecture \cite{BrockCanaryMinsky:ELC2} and realization of end invariants results \cite{Oh}, \cite[Theorem 1.3]{NS12} one can obtain a parameterization theorem which establishes a bijective correspondence between hyperbolic structures on $(\mb{M},\mb{P})$ and suitable collections of pairs of end invariants $\nu^-,\nu^+$. For us, it will be sufficient the following corollary.

\begin{thm}
\label{thm:hyperbolization2}
If $S\times[-1,1]$ admits a hyperbolic structure with end invariants $\nu^-,\nu^+$, then $(S\times[-1,1]-V,\partial V)$ admits a unique complete hyperbolic structure realizing the same end invariants $\nu^-,\nu^+$.     
\end{thm}

\section{Effective Efficiency}
\label{sec:effeff}

In this section we prove the effective efficiency result which is one of the main ingredients of the proof of Theorem \ref{thm:main1}. The statement is the following. 

\begin{thm}
\label{thm:effeff}
Let $\ep_0<{\rm arcsinh}(1/4)$ be a Margulis constant for dimensions 2 and 3. Let $S$ be a connected closed orientable surface of genus at least 2. Consider a proper essential connected non-annular subsurface $Y\subset S$ and denote by $\gamma_Y\subset S$ the multicurve obtained by collapsing parallel components of $\partial Y$ to a single essential simple closed curve. Let $Q$ be a hyperbolic manifold diffeomorphic to $S\times(-1,1)$ with rank one cusps at the multicurve $\gamma_Y$. Suppose that $\alpha\subset S$ is an essential simple closed curve that intersects $\gamma_Y$ essentially and whose geodesic representative in $Q$ has length $L\ge 0$ (with the convention that $\alpha$ is parabolic when $L=0$). Then the subsurface projection of $\alpha$ to $Y$ can be represented by a simple closed curve on $Y$ whose geodesic representative in $Q$ has length at most 
\[
4\ep_0+\frac{16\pi}{\sinh(\ep_0^{10}/2^{43}\pi^{8}|\chi(S)|^{16})}+\frac{256T(\pi/6)\pi^2|\chi(S)|^2}{{\rm vol}(\ep_0^{10}/2^{43}\pi^{8}|\chi(S)|^{16})^2}+2L
\]
where ${\rm vol}(\bullet)$ denotes the volume of balls in $T^1\mb{H}^2$.
\end{thm}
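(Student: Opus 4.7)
The plan is to realize $Y$ by a pleated surface in $Q$, to use that realization to produce a geodesic arc $\tilde\tau$ on $(Y,\sigma)$ representing an arc of $\alpha\cap Y$, and to combine the recurrence and Anosov closing lemmas of Section \ref{sec:2} with explicit cusp-geometry estimates to turn $\tilde\tau$ into a short simple closed curve on $Y$ in the homotopy class of a subsurface projection representative of $\alpha$.

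\textbf{Pleated realization and the geodesic arc.} Since $\gamma_Y$ is parabolic in $Q$ and $Y\subset S$ is $\pi_1$-injective, a standard Thurston--Canary construction produces a 1-Lipschitz $\pi_1$-injective pleated map $f:(Y,\sigma)\to Q$ with $(Y,\sigma)$ a complete finite-area hyperbolic surface having cusps at $\partial Y$ matched to the rank-one cusps of $Q$ at $\gamma_Y$. Pick an arc $\tau\subset\alpha\cap Y$ connecting two (possibly equal) components of $\partial Y$, and let $\tilde\tau$ be the unique geodesic representative of $\tau$ rel cusps in $(Y,\sigma)$, a proper arc with endpoints escaping to cusps. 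Arranging the pleating lamination to include $\tilde\tau$ makes $f|_{\tilde\tau}$ a local isometry, so $f(\tilde\tau)$ is a geodesic in $Q$ going between the two cusps of $\gamma_Y$ determined by $\tau$, homotopic rel cusps to the corresponding sub-arc of the closed geodesic $\alpha^*$ of length $L$. By minimality of $f(\tilde\tau)$ among competitors in its class, the essential length $T_0$ of $\tilde\tau$ (in the $\ep_0$-thick part of $(Y,\sigma)$) is bounded above by $L+2\ep_0$; the parabolic case $L=0$ is handled analogously by approximating $\alpha$ by short closed geodesics as in Theorem \ref{thm:end to length}.

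\textbf{Surgery, recurrence, and the length bound.} The subsurface projection $\pi_Y(\alpha)$ is represented by the boundary of a regular neighborhood of $\tilde\tau\cup\partial_\tau^1 Y\cup\partial_\tau^2 Y$ in $(Y,\sigma)$. Choosing the depth of the cuspidal portion of this neighborhood so that the injectivity radius of $(Y,\sigma)$ there equals $\epsilon:=\ep_0^{10}/2^{43}\pi^8|\chi(S)|^{16}$, the cusp-winding contribution is bounded by $16\pi/\sinh(\epsilon)$ (allowing for the multiple windings needed to correctly encode a surgery representative) and the along-$\tilde\tau$ contribution is at most $2T_0\le 2L+4\ep_0$, so pushing via the 1-Lipschitz $f$ yields the stated bound when $\tilde\tau$ behaves simply. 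When instead $\tilde\tau$ spirals many times through thin regions before closing up, Lemma \ref{lem:recurrence} applied at scale $\epsilon$ along the $\ep_0$-thick portion of $\tilde\tau$ produces two $\epsilon$-close velocity vectors after a segment of length $\approx 4\pi|\chi(Y)|/{\rm vol}(\epsilon/2)$, and Lemma \ref{lem:quasi-geodesic} splices in a shortcut producing a closed geodesic $\tilde\delta\subset(Y,\sigma)$ of length at most $\frac{256T(\pi/6)\pi^2|\chi(S)|^2}{{\rm vol}(\epsilon/2)^2}$, which accounts for the third term of the stated bound.

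\textbf{The main obstacle}, and the source of the extreme smallness of $\epsilon$, is verifying that the geodesic $\tilde\delta$ produced by Anosov closing (or the naive surgery curve in the easy regime) indeed represents an element of $\pi_Y(\alpha)$ and has not drifted into an unrelated class in $\mc{C}(Y)$. Ensuring this requires choosing the recurrence times $t_i<t_j$ so that the shortcut remains inside a topologically trivial (free-subgroup) region of $(Y,\sigma)$, which Lemma \ref{lem:short gen}(2) makes quantitative in terms of $\ep_0$ and $|\chi(S)|$; tracking these constants through the Anosov closing threshold $T(\pi/6)$ and the 1-Lipschitz comparison $\ell_Q\le 2\ell_\sigma$ of Theorem \ref{thm:end to length} produces the intricate exponents $\ep_0^{10}$ and $|\chi(S)|^{16}$ appearing in the definition of $\epsilon$.
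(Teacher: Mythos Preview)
Your proposal has a genuine gap at its core. The assertion that ``the essential length $T_0$ of $\tilde\tau$ (in the $\ep_0$-thick part of $(Y,\sigma)$) is bounded above by $L+2\ep_0$'' is exactly the difficulty the theorem is designed to overcome, and it is not true in general. The geodesic $f(\tilde\tau)$ runs between two rank-one cusps of $Q$, while $\alpha^*$ is a closed geodesic that need not come anywhere near those cusps; there is no competitor in the proper homotopy class of $\tau$ built from a sub-arc of $\alpha^*$ whose thick-part length is controlled by $L$. If this bound held, the entire machinery of the paper's proof would be unnecessary. You also acknowledge but do not resolve the second obstacle: the curve $\tilde\delta$ produced by Anosov closing along $\tilde\tau$ is some closed geodesic on $(Y,\sigma)$, but nothing you have said ties it to $\pi_Y(\alpha)$.

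The paper's approach is structurally different. It pleats all of $S-\gamma_Y$ (not just $Y$), realizing each arc $\alpha_j$ of $\alpha-\gamma_Y$ as a bi-infinite geodesic $\ell_j$, and then builds an \emph{ideal pleated annulus} $h:(A,\sigma_A)\to Q$ of area $n\pi$ (where $n=i(\alpha,\gamma_Y)$) interpolating between the ideal concatenation $\ell_1\star\cdots\star\ell_n$ and $\alpha^*$. The link to $L$ comes only through this annulus: a length--area argument on $(A,\sigma_A)$ shows that many of the non-cuspidal segments $\tau_j\subset\ell_j$ lying in $Y$ must be short. What makes that argument work is a purely topological \emph{no-shortcuts} lemma (exploiting that $\alpha$ is in minimal position with respect to $\gamma_Y$): an arc $\xi_A\subset A$ joining two distinct leaves of $\partial A$ cannot be homotoped rel endpoints into $S-\gamma_Y$. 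This replaces the Uniform Injectivity Theorem and is what forces pairs of segments not to $\ep_2$-fellow-travel for long; the recurrence and Anosov-closing lemmas are used here, but to bound the fellow-traveling time of \emph{two different} arcs in $A$ (producing a forbidden shortcut), not to manufacture the final curve. The output curve is simply a short $\tau_Y$ closed up with the $2\ep_0$-loops at the cusp boundaries, so it lies in $\pi_Y(\alpha)$ by construction.
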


\begin{remark}
\label{rmk:length coeff}
Here $T(\bullet)$ is the constant of Lemma \ref{lem:quasi-geodesic}. We remark that, using $T(\pi/6)=2\cdot 10^5$, $\sinh(x)\ge x$ and ${\rm vol}(x)\ge{\rm vol}_{\mb{R}^3}(x)/2=(2\pi/3)x^3$ for small enough $x$ (only depending on $T^1\mb{H}^2$ by the asymptotic expansion of ${\rm vol}(x)$), we can estimate the above formula (in terms of the Margulis constant $\ep_0$) with 
\[
2L+\frac{2^{384}}{\ep_0^{60}}|\chi(S)|^{98}.
\]    
\end{remark}

Before going on with the proof we need to recall some facts about {\em pleated surfaces}, which are tools introduced by Thurston to study the geometry of ends of hyperbolic 3-manifolds, and review a couple of their structural properties.

\subsection{Pleated surfaces}
We begin with the following definition.

\begin{dfn}[Pleated Surface]
A {\em pleated surface} consists of the following data: A (possibly disconnected) surface of finite type $Z$ together with a complete finite area hyperbolic metric $(Z,\sigma)$ and a proper map $f:Z\to M$ in a hyperbolic 3-manifold $M$ such that:
\begin{itemize}
  \item{The map $f$ is path isometric, meaning that $f$ sends a rectifiable path to a rectifiable path of the same length.}
  \item{There is a $\sigma$-geodesic lamination $\lambda$ such that $f$ maps each leaf $\ell\subset\lambda$ to a complete geodesic in $M$ (possibly closed if the leaf $\ell$ is closed) and $f$ is a totally geodesic immersion on $Z-\lambda$.}
\end{itemize}  

We will also say that $f$ {\em maps geodesically} the lamination $\lambda$.
\end{dfn}

Pleated surfaces are very useful as they are abundant and can go almost everywhere in a hyperbolic 3-manifold diffeomorphic to $Z\times\mb{R}$. We have the following standard realization result.

\begin{pro}[{see \cite[Theorem I.5.3.6]{CEG:notes_on_notes}}]
\label{pro:realization}
Let $g:Z\to M$ be a $\pi_1$-injective map of a finite type surface $Z$ without boundary to a hyperbolic manifold $M$ mapping properly each end of $Z$ into the cuspidal part of $M$. For every ideal triangulation $\lambda\subset Z$ there exists a pleated surface $f:(Z,\sigma)\to M$ properly homotopic to $g$ and mapping geodesically $\lambda$.   
\end{pro}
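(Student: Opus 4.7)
The strategy is to construct $f$ equivariantly on the universal cover from the combinatorics of $\lambda$ and the parabolic data supplied by $g$, then descend. Set $\rho := g_* : \pi_1(Z) \to \mathrm{Isom}^+(\mb{H}^3)$, which is discrete and faithful by $\pi_1$-injectivity of $g$. Lift $\lambda$ to a $\pi_1(Z)$-invariant ideal triangulation $\tilde\lambda$ of the universal cover $\tilde Z$, and let $\tilde V \subset \partial\tilde Z$ be its set of ideal vertices; each $v \in \tilde V$ is stabilized by a maximal peripheral subgroup $P_v<\pi_1(Z)$ corresponding to an end of $Z$. The first step is to assign each $v$ a point $x_v \in \partial\mb{H}^3$: since $g$ sends this end properly into a cuspidal neighborhood of $M$, the image $\rho(P_v)$ consists of parabolic isometries sharing a unique fixed point $x_v$. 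The assignment $v \mapsto x_v$ is $\rho$-equivariant by construction.

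The crucial consistency check is that for every ideal triangle $\tilde T$ of $\tilde\lambda$ with vertices $v_1,v_2,v_3$, the three points $x_{v_1},x_{v_2},x_{v_3}$ are pairwise distinct. This uses $\pi_1$-injectivity: two distinct maximal peripheral subgroups $P_{v_i}\neq P_{v_j}$ of the surface group $\pi_1(Z)$ generate a non-abelian free subgroup, and its image under $\rho$ is discrete and non-abelian, hence cannot consist entirely of parabolic elements with a common fixed point. Given this, I would define $\tilde f$ by mapping each ideal edge homeomorphically to the unique hyperbolic geodesic with the two prescribed endpoints at infinity, and each ideal triangle homeomorphically onto the unique ideal hyperbolic triangle in $\mb{H}^3$ with the three prescribed ideal vertices, matching on shared edges. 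The resulting $\tilde f$ is $\rho$-equivariant, sends each leaf of $\tilde\lambda$ to a complete hyperbolic geodesic, and descends to $f:Z\to M$ mapping $\lambda$ geodesically.

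Next I would verify that the pullback metric $\sigma$ is a complete finite-area hyperbolic metric. On the interior of each complementary region of $\lambda$ the metric is the pullback of an ideal hyperbolic triangle, and adjacent regions are glued along complete geodesics by isometries, so $\sigma$ is locally hyperbolic; note there are no interior cone points to check since an ideal triangulation has no finite vertices. For completeness at each cusp of $Z$, observe that the cycle of ideal triangles surrounding the cusp all share, as a common ideal vertex, a single lift $v\in\tilde V$, so by equivariance they all map to the same parabolic fixed point $x_v$; this identifies a neighborhood of the cusp in $\sigma$ with a standard hyperbolic horocusp. Gauss--Bonnet then gives $\mathrm{Area}(Z,\sigma)=2\pi|\chi(Z)|<\infty$.

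Finally, I would construct the proper homotopy. Since $f_*=g_*=\rho$ and $Z$ is aspherical, the equivariant lifts $\tilde f,\tilde g:\tilde Z\to\mb{H}^3$ are homotopic by the straight-line (constant-speed geodesic) homotopy in $\mb{H}^3$, which is automatically $\rho$-equivariant and descends. To upgrade to a proper homotopy, I would use that near each end of $Z$ both $\tilde f$ and $\tilde g$ map into a fixed horoball based at $x_v$ (for $\tilde f$ this follows from the cusp description above, for $\tilde g$ by the hypothesis), and straight lines between points in a horoball stay in a slightly larger horoball; choosing cusp neighborhoods small enough in $Z$ then yields a proper homotopy. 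The main obstacle is the distinct-fixed-point check of the second paragraph together with the cusp completeness of the third, both of which pivot on the $\pi_1$-injectivity of $g$ and on the proper-into-cusps hypothesis; once these are in hand the rest is a packaging exercise.
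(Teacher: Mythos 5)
Your proposal reproduces the standard straightening construction from Canary--Epstein--Green (the reference the paper cites in lieu of a proof), so the overall approach is correct: develop each ideal simplex of the lifted triangulation to the straight ideal triangle in $\mb{H}^3$ spanned by the parabolic fixed points of the peripheral subgroups, check equivariance and the pullback metric, and use horoball convexity for the proper homotopy. The distinct-fixed-point check via faithfulness and discreteness of $\rho$ together with the absence of $\mb{Z}^2$ subgroups of $\pi_1(Z)$ is sound, and is indeed where $\pi_1$-injectivity is really used.

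The one place where your write-up is genuinely too quick is the claim that the cusp of $\sigma$ is complete because the surrounding triangles share the ideal vertex $x_v$. Sharing an ideal vertex is not enough: gluing ideal hyperbolic triangles around a common ideal vertex can just as easily produce an \emph{incomplete} hyperbolic end, if the developing holonomy of the peripheral loop around that vertex is loxodromic rather than parabolic --- this is the usual shearing/completeness obstruction for ideal triangulations. What actually rescues you is the parabolicity of $\rho(p)$ for $p$ a generator of $P_v$, which you established at the outset but do not bring to bear at this step. The standard argument runs: put $x_v=\infty$ in the upper half-space model, so that $\rho(p)$ is a horizontal Euclidean translation; each image triangle has two vertical sides meeting $\infty$, and the horosphere $\{z=h\}$ cuts the image of the fan in a piecewise-linear arc whose pullback is the intrinsic $\sigma$-horocycle about the cusp. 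Because $\rho(p)$ preserves each horosphere $\{z=h\}$, this horocycle closes up, with length $L/h$ for a fixed $L>0$ (finite because there are finitely many triangles modulo $P_v$, positive because the triangles are non-degenerate by your distinct-fixed-point check). A family of embedded closed horocycles of length $Le^{-t}$ at distance $t$ is precisely a complete standard cusp neighborhood. With that observation inserted, and noting that this is exactly where a loxodromic $\rho(p)$ would break the argument, the proof is complete and matches the cited reference.
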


A very important geometric property of pleated surfaces $f:(Z,\sigma)\to M$ is that they relate nicely the thick-thin decomposition of $(Z,\sigma)$ to that of $M$ as we now describe. The following Lemmas \ref{lem:straight in cusp} and \ref{lem:thick to thick} are all observations due to Thurston, we include proofs to make the constants involved in his arguments effective.

\begin{lem}[{see \cite[Proposition 8.8.4]{ThuNotes}}]
\label{lem:straight in cusp}
Let $\ep_0<{\rm arcsinh}(1/4)$ be a Margulis constant for surfaces. We have the following. Let $(Z,\sigma)$ be a finite-volume hyperbolic surface and $\lambda$ a lamination on $Z$. Then any component of the intersection of $\lambda$ with the $\ep_0$-cuspidal part of $Z$ is an infinite ray orthogonal to the boundary of the $\ep_0$-cuspidal part.
\end{lem}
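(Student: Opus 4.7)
The plan is to lift everything to the universal cover of the $\ep_0$-cuspidal part and classify the possible lifts of components of $\lambda$. First I would pass to the upper half-plane model of $\mb{H}^2$ so that the cusp in question lifts to a horoball $H=\{y>Y_0\}$ with deck group generated by the parabolic $z\mapsto z+L$. The Margulis condition that the injectivity radius at $\partial H$ equals $\ep_0$, combined with the standard identity $\sinh(d/2)=|z_1-z_2|/(2\sqrt{y_1y_2})$ for the hyperbolic distance, yields the relation $L=2Y_0\sinh(\ep_0)$.

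Given a component $\tau$ of $\lambda\cap{\rm\bf cusp}(\gamma,\ep_0)$, I would lift it to an arc on a complete geodesic $\tilde\ell\subset\mb{H}^2$ meeting $H$. In this model $\tilde\ell$ is either a vertical line with endpoint $\infty$ or a Euclidean half-circle orthogonal to the real axis. In the first case $\tilde\ell\cap H$ is a vertical ray, orthogonal to every horocycle $\{y={\rm const}\}$ and in particular to $\partial H$, which is exactly the desired conclusion after projecting back down. The remaining task is to rule out the half-circle case.

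The hard part is precisely this last step, and it is where the hypothesis $\ep_0<{\rm arcsinh}(1/4)$ enters. Suppose for contradiction that $\tilde\ell$ is a half-circle of Euclidean radius $r$ meeting $H$; then $r>Y_0$ because $r$ is the maximum height of $\tilde\ell$. Now I would exploit simplicity of the lamination: the parabolic translate $\tilde\ell+L$ is another lift of $\lambda$, and two distinct lifts of the preimage of $\lambda$ must be disjoint (otherwise the projected leaves would either self-intersect or cross a different leaf, which is forbidden for a lamination). Two half-circles centered on the real axis with equal radius $r$ whose centers lie at Euclidean distance $L$ are disjoint in $\mb{H}^2$ precisely when $2r\le L$, forcing $r\le Y_0\sinh(\ep_0)$. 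Combined with $r>Y_0$, this gives $\sinh(\ep_0)>1$, i.e.\ $\ep_0>{\rm arcsinh}(1)$, contradicting $\ep_0<{\rm arcsinh}(1/4)$. Hence only the vertical case occurs, and every component of $\lambda\cap{\rm\bf cusp}(\gamma,\ep_0)$ is an infinite geodesic ray meeting $\partial{\rm\bf cusp}(\gamma,\ep_0)$ orthogonally, as claimed.
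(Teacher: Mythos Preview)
Your proof is correct and follows essentially the same route as the paper: lift to the upper half-plane, observe that lifts of leaves are either vertical lines or Euclidean half-circles, bound the radius of any half-circle lift via disjointness from its parabolic translate, and compare this bound with the height of the horoball determined by the Margulis condition. The paper normalizes the parabolic to $z\mapsto z+1$ and checks containment of the cuspidal region in $\{\mathrm{Im}(z)>1/2\}$, while you keep the general translation length $L$ and height $Y_0$ and derive the contradiction $\sinh(\ep_0)>1$ directly; the arguments are otherwise identical.
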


\begin{proof}
Fix a cusp of $(Z,\sigma)$. Identify the universal cover $\tilde{Z}$ of $(Z,\sigma)$ with $\mb{H}^2$ and lift $\lambda$ to a lamination $\tilde{\lambda}$ of $\mb{H}^2$ invariant under $\Gamma<{\rm Isom}^+(\mb{H}^2)$, the image of the holonomy representation $\pi_1(Z)\to{\rm Isom}^+(\mb{H}^2)$. We work in the upper half plane model $\mb{H}^2=\{z\in\mb{C}\,|\,{\rm Im}(z)>0\}$. Each of the leaves of $\tilde{\lambda}$ is either a vertical ray of a half-circle orthogonal to $\mb{R}$. Up to conjugation, we can assume that the holonomy of the simple closed curve $\gamma$ surrounding a chosen cusp is the parabolic isometry $\phi\in\Gamma$ given by $\phi(z)=z+1$. The observation is that the Euclidean radii of the half-circle components $\ell$ of $\tilde{\lambda}$ is at most $1/2$ otherwise $\ell\cap\phi(\ell)$ would not be empty (the two leaves $\ell,\phi(\ell)$ of $\tilde{\lambda}$ cross). Now, the standard $\ep_0$-collar of the cusp is $\{z\in\mb{C}\,|\,d_\mb{H}^2(z,\phi(z))<2\ep_0\}/\langle\phi\rangle$. By basic hyperbolic geometry, 
\[
d_{\mb{H}^2}(z,\phi(z))=2\cdot{\rm arcsinh}(|z-\phi(z)|/2\sqrt{{\rm Im}(z){\rm Im}(\phi(z))})=2\cdot{\rm arcsinh}({\rm Im}(z)/2)
\]
hence $\{z\in\mb{C}\,|\,d_\mb{H}^2(z,\phi(z))<2\ep_0\}=\{z\in\mb{C}\,|\,{\rm Im}(z)>2\cdot\sinh(\ep_0)\}$. Assuming that $\ep_0<{\rm arcsinh}(1/4)$ the above is contained in the horoball $\mc{O}=\{z\in\mb{C}\,|\,{\rm Im}(z)>1/2\}$. A leaf of $\tilde{\lambda}$ that intersects $\mc{O}$ is a vertical line (which is orthogonal to the boundary horocycle and goes straight towards the center of the horoball). 
\end{proof}

\begin{lem}[{see \cite[Lemma 3.1]{M00}}]
\label{lem:thick to thick}
Let $\ep_0<{\rm arcsinh}(1/4)$ be a Margulis constant for dimensions 2 and 3. Consider a $\pi_1$-injective pleated surface $f:(Z,\sigma)\to Q$ in a hyperbolic 3-manifold $Q$. Choose
\[
\ep_1\le\frac{\ep_0^{10}}{2^{37}\pi^{8}|\chi(Z)|^{16}}.
\]
For every $x\in Z$ we have the following. If $f(x)\in Q_{\ep_1}$ then $x\in(Z,\sigma)_{\ep_0}$.
\end{lem}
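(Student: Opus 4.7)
The plan is to argue by contrapositive: assuming $x$ has ${\rm inj}_x(Z)\ge\ep_0$, I will produce two short loops at $x$ generating a free rank-2 subgroup of $\pi_1(Z,x)$, transport them via $f$ to loops at $f(x)$ of controlled length, and derive a contradiction from the assumption $f(x)\in Q_{\ep_1}$ by using that every $\ep_0$-thin component of $Q$ is $\pi_1$-injective with abelian fundamental group.

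First I would apply Lemma \ref{lem:short gen}(2) at $x$ to obtain simple geodesic loops $\gamma,\gamma'$ based at $x$ of length at most
\[
L := 2\log\left(\frac{256\pi^2|\chi(Z)|^4}{\ep_0^2}\right)
\]
that together generate a free subgroup of rank $2$ in $\pi_1(Z,x)$. A pleated surface is $1$-Lipschitz, since it restricts to a local isometry on each ideal triangle of $Z-\lambda$ and on each leaf of $\lambda$; combined with the $\pi_1$-injectivity of $f$, this shows that $f(\gamma)$ and $f(\gamma')$ are loops at $f(x)$ of length at most $L$ whose homotopy classes generate a free rank-$2$ subgroup of $\pi_1(Q,f(x))$. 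Both are contained in the closed ball $\overline{B}_Q(f(x),L/2)$, since any loop of length $L$ based at a point stays within distance $L/2$ of its basepoint.

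Now suppose for contradiction that $f(x)\in Q_{\ep_1}$, and let $C$ denote the component of the $\ep_0$-thin part of $Q$ containing $f(x)$. By the three-dimensional thick-thin decomposition, $C$ is either a Margulis tube or a rank-$1$ or rank-$2$ cuspidal neighborhood, so $C$ is $\pi_1$-injective in $Q$ and $\pi_1(C)$ is abelian (cyclic or $\mb{Z}^2$). If I can guarantee $\overline{B}_Q(f(x),L/2)\subset C$, then $f(\gamma)$ and $f(\gamma')$ represent elements of $\pi_1(C,f(x))$ whose images in $\pi_1(Q,f(x))$ lie in an abelian subgroup, contradicting that they generate a free rank-$2$ subgroup and completing the argument.

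The remaining task, and the technical heart of the proof, is to pin down a value of $\ep_1$ small enough that $\overline{B}_Q(f(x),L/2)\subset C$. This is a direct computation in the universal cover of $C$ in each of the three cases. In a Margulis tube with core geodesic of length $\ell<2\ep_0$, the injectivity radius at a point at distance $d$ from the core is ${\rm arcsinh}(\sinh(\ell/2)\cosh(d))$, so the distance from a point of injectivity radius $\ep_1$ to $\partial C$ is ${\rm arccosh}(\sinh(\ep_0)/\sinh(\ell/2))-{\rm arccosh}(\sinh(\ep_1)/\sinh(\ell/2))$, which behaves like $\log(\sinh(\ep_0)/\sinh(\ep_1))$ as $\ep_1\to 0$. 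Analogous horospherical computations, very similar to those already carried out in the proof of Lemma \ref{lem:short gen}(2) for the parabolic case, handle rank-$1$ and rank-$2$ cusps. Requiring this distance to exceed $L/2$ with $L=2\log(256\pi^2|\chi(Z)|^4/\ep_0^2)$ forces an upper bound on $\ep_1$ polynomial in $\ep_0$ and in $|\chi(Z)|^{-1}$. The main obstacle is the bookkeeping needed to extract a single explicit bound that works uniformly across the tube and cusp cases; the stated value $\ep_0^{10}/(2^{37}\pi^8|\chi(Z)|^{16})$ is what one obtains by taking the worst of these estimates together with safe approximations of $\sinh$ and ${\rm arccosh}$ and absorbing the numerical coefficients coming from $L$.
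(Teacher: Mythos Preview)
Your argument follows the same route as the paper's: contrapositive, Lemma~\ref{lem:short gen}(2) for the rank-2 free subgroup, 1-Lipschitz transport via $f$, and the contradiction with the abelian fundamental group of the $\ep_0$-thin component containing $f(x)$.

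The one point that needs more care is the Margulis tube case. Your injectivity radius formula ${\rm arcsinh}(\sinh(\ell/2)\cosh(d))$ is the 2-dimensional (equivalently, untwisted) one; a 3-dimensional tube carries a rotation angle $\theta$, and the half-displacement of the generator at distance $d$ from the core satisfies $\sinh^2({\rm inj})=\sinh^2(\ell/2)\cosh^2(d)+\sin^2(\theta/2)\sinh^2(d)$ (and one must in principle minimize over powers). Your formula is therefore only a \emph{lower} bound on the injectivity radius, which yields \emph{upper} bounds on both radii $r_0,r_1$ of $\partial\mb{T}_{\ep_0},\partial\mb{T}_{\ep_1}$ and hence no direct lower bound on $r_0-r_1$. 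The paper does not carry out this computation either; instead it invokes the effective Brooks--Matelsky separation estimate of Futer--Purcell--Schleimer \cite{FPS19}, namely
\[
d_Q(\partial\mb{T}_{\ep_1},\partial\mb{T}_{\ep_0})\ge{\rm arccosh}\bigl(\ep_0/\sqrt{7.256\,\ep_1}\bigr)-0.042,
\]
which handles the twist uniformly, and then checks via $\log(x)\le{\rm arccosh}(x)$ that the stated choice of $\ep_1$ makes this exceed $L$. The cusp cases are indeed elementary as you indicate, with separation $\log(\sinh(\ep_0)/\sinh(\ep_1))$.
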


\begin{proof}
By Lemma \ref{lem:short gen}, if $x$ is in the $\ep_0$-thick part, then there are two loops $\gamma,\gamma'$ based at $x$ of length at most $L_0=2\log(256\pi^2|\chi(Z)|^4/\ep_0^2)$ generating a free subgroup of $\pi_1(Z,x)$ of rank 2. The image of such loops under $f$ is contained in the $L_0$-neighborhood of $f(x)\in\mb{T}_{\ep_1}$ (as $f$ is 1-Lipschitz). By work of Futer, Purcell, and Schleimer \cite{FPS19} (making effective a theorem of Brooks and Matelsky \cite{BM82}), the distance between the boundaries $\partial\mb{T}_{\ep_1}$ and $\partial\mb{T}_{\ep_0}$ of two standard Margulis tubes is at least ${\rm arccosh}(\ep_0/\sqrt{7.256\ep_1})-0.042$. A similar separation result holds in rank one and rank two cuspidal neighborhoods as well. The computation there is completely elementary: The distance between $\partial\mb{C}_{\ep_0}(\gamma)$ (resp. $\partial\mb{C}_{\ep_0}(T)$) and $\partial\mb{C}_{\ep_1}(\gamma)$ (resp. $\partial\mb{C}_{\ep_1}(T)$) is $\log(\sinh(\ep_0)/\sinh(\ep_1))$ (note that this quantity is larger than ${\rm arccosh}(\ep_0/\sqrt{7.256\ep_1})-0.042$). If $\ep_1\le\ep_0^{10}/2^{37}\pi^{8}|\chi(Z)|^{16}$ then, using $\exp(-0.042)\in(1/2,1)$, we have 
\[
{\rm arccosh}(\ep_0/\sqrt{7.256\ep_1})-0.042\ge{\rm arccosh}(2^{17}\pi^4|\chi(Z)|^8/\ep_0^4)+\log(1/2).
\]
Note that $\log(x)\le{\rm arccosh}(x)$ so
\[
{\rm arccosh}(2^{17}\pi^4|\chi(Z)|^8/\ep_0^4)+\log(1/2)\ge\log(2^{16}\pi^4|\chi(Z)|^8/\ep_0^4)=L_0.
\]

Therefore, if $\ep_1\le\ep_0^{10}/2^{37}\pi^{8}|\chi(Z)|^{16}$ then we have $d_Q(\partial\mb{T}_{\ep_1},\partial\mb{T}_{\ep_0})\ge L_0$ (resp. $d_Q(\partial\mb{C}_{\ep_1},\partial\mb{C}_{\ep_0})\ge L_0$ if $f(x)$ is in a cuspidal neighborhood). Thus the image of the loops $\gamma,\gamma'$ is contained in $\mb{T}_{\ep_0}$ (resp. $\mb{C}_{\ep_0}$), but this is not possible as $Z$ is $\pi_1$-injective and $\langle\gamma,\gamma'\rangle$ is a free group of rank 2 while $\pi_1(\mb{T}_{\ep_0})$ (resp. $\pi_1(\mb{C}_{\ep_0})$) is abelian. We conclude that necessarily ${\rm inj}_x(Z)<\ep_0$.
\end{proof}

\subsection{Proof of Theorem \ref{thm:effeff}}

We can now launch the proof of Theorem \ref{thm:effeff}. 

We subdivide the proof into essential steps taking care of one problem at a time. 

\subsection{Topological preliminaries}
\label{subsec:identifications}
Let us begin with some topological preliminaries.

\subsubsection{Identification of $Q$ with $S\times(-1,1)$}
We make the following choices (allowed by Bonahon's Tameness Theorem \cite{Bo86}). If $\alpha$ is not parabolic, we choose a diffeomorphism of $Q$ (inducing the identity at the level of fundamental groups) with the interior of 
\[
S\times[-1,1]-\gamma_Y\times\{-1\}
\]
so that the cuspidal neighborhood $\mb{C}_{\ep_0}(\gamma)$ for $\gamma\subset\gamma_Y$ is the intersection with $S\times(-1,1)$ of a collar neighborhood $A_\gamma\times\{-1\}$ where $A_\gamma\subset S$ is a tubular neighborhood of $\gamma$.

If $\alpha$ is parabolic, we choose a diffeomorphism of $Q$ (inducing the identity at the level of fundamental groups) with the interior of
\[
S\times[-1,1]-(\gamma_Y\times\{-1\}\cup\alpha\times\{1\})
\]
so that the cuspidal neighborhood $\mb{C}_{\ep_0}(\gamma)$ (resp. $\mb{C}_{\ep_0}(\alpha)$) for $\gamma\subset\gamma_Y$ (resp. $\alpha$) is the intersection with $S\times(-1,1)$ of a collar neighborhood $A_\gamma\times\{-1\}$ (resp. $A_\alpha\times\{1\}$) where $A_\gamma\subset S$ (resp. $A_\alpha\subset S$) is a tubular neighborhood of $\gamma$ (resp. $\alpha$).

We denote by $\iota:S-\gamma_Y\to\bar{Q}$ the inclusion of $(S-\gamma_Y)\times\{-1\}$.

\subsubsection{The curve $\alpha$ as an ideal concatenation}
We put $\alpha$ in minimal position with respect to $\gamma_Y$ and consider the segments of $\alpha-\gamma_Y=\alpha_1\cup\cdots\cup\alpha_n$ where $n=i(\alpha,\gamma_Y)$ (see Figure \ref{fig:minimalposition}). The numbering of the segments is such that $\alpha_{j+1}$ follows $\alpha_j$ along $\alpha$ (indices modulo $n$). Collapsing parallel components of $\alpha-\gamma_Y$ to single ones, we obtain a finite leaved lamination $\lambda$ of $S-\gamma_Y$ (see Figure \ref{fig:lamination}) which we can complete to an ideal triangulation $\lambda'$ of $S-\gamma_Y$ by adding finitely many leaves. 

\begin{figure}[h]
\begin{overpic}[scale=2]{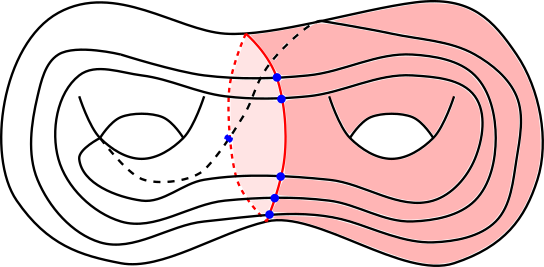}
\put (48,48) {\color{red}$\gamma_Y$}   
\put (90,48) {\color{red}$Y$}   
\end{overpic}
\caption{The curve $\alpha$ as an ideal concatenation.}
\label{fig:minimalposition}
\end{figure}

\begin{figure}[h]
\begin{overpic}[scale=2]{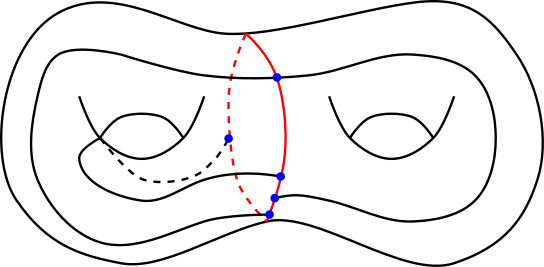}
\end{overpic}
\caption{The finite leaved lamination $\lambda\subset S-\gamma_Y$ obtained from $\alpha-\gamma_Y$ by collapsing parallel components to single ones.}
\label{fig:lamination}
\end{figure}

By Proposition \ref{pro:realization}, there is a pleated surface $f:(S-\gamma_Y,\sigma)\to Q$ properly homotopic to the inclusion $\iota:S-\gamma_Y\to\bar{Q}$ mapping geodesically $\lambda'$. 

The moderate-length surgery promised by the theorem will correspond to a component of $\lambda\cap (S-\gamma_Y,\sigma)_0=\lambda\cap(S-\bigcup_{\gamma\in\gamma_Y}{{\rm\bf cusp}(\gamma,\ep_0)})$ (suitably concatenated with geodesic loops representing the boundary components of the thin part). In order to get some control on the length of such components we use the geometry of an ideal pleated annulus, joining the ideal concatenation $f(\ell_1)\star\ldots\star f(\ell_n)$ to the geodesic representative of $\alpha$ in $Q$. In the end the necessary bound will be deduced from a length-area estimate on such ideal annulus. 

\subsection{Ideal pleated annuli}
\label{subsec:annuli}
Recall that in the previous section we constructed a pleated surface $f:(S-\gamma_Y,\sigma)\to Q$ properly homotopic to the inclusion $\iota:S-\gamma_Y\to\bar{Q}$ and mapping geodesically (an ideal triangulation containing) the lamination $\lambda$ obtained from $\alpha-\gamma_Y$ by collapsing parallel components to single ones (each $\alpha_j$ corresponds to a leaf $\ell_j$ of $\lambda$).

We now describe how to construct a geometrically meaningful homotopy between the ideal concatenation
\[
f(\ell_1)\star\ldots\star f(\ell_n)
\]
and the geodesic representative of $\alpha$ in $Q$, which we denote by $\alpha^*$, or to a cusp when $\alpha$ is parabolic. Various versions of this construction have already been used in the literature (most notably in the proofs of \cite[Theorem 3.3]{Thu2}, \cite[Proposition 5.4]{Ca}, \cite[Theorem 3.5]{M00}). Informally speaking, we will produce a piecewise totally geodesic immersed hyperbolic crown (see Figure \ref{fig:idealannulus})
\[
h:A\to Q
\]
where $A$ is homeomorphic to $\alpha\times[-1,0]-\{x_1,\cdots,x_n\}$ (where $x_j=\alpha_{j-1}\cap\alpha_j$) and is equipped with a hyperbolic metric with totally geodesic boundary $(A,\sigma_A)$. The map $h$ will be a path isometry and will satisfy $h(\alpha_j\times\{-1\})=f(\ell_j)$ and $f(\alpha\times\{0\})=\alpha^*$. Most of the work of this section is to analyze how the geometries of $(A,\sigma_A)$ and of $(S-\gamma_Y,\sigma)$ interact.

We briefly sketch the construction of $h:A\to Q$ (there are two cases depending on whether $\alpha$ is parabolic or not).

\subsubsection{Case $\alpha$ not parabolic}
First consider a proper homotopy $h_0:\alpha\times[-1,0]\to S\times[-1,1]$ between $\alpha\subset S\times\{-1\}$ and the closed geodesic $\alpha^*$.  

Let $\alpha_j$ be the $j$-th segment of $\alpha-\gamma_Y$ (starting at $x_j$ and ending at $x_{j+1}$). As a first step, we properly homotope $\iota(\alpha_j)$ to $f(\ell_j)$. By the homotopy extension property, we extend this homotopy to the whole annulus (relative to $\alpha\times\{0\}$). Call the resulting map $h_1:\alpha\times[0,1]-\{x_1,\cdots,x_n\}\times\{-1\}\to Q$.

Then, we spin each of the arcs $h_1:\{x_j\}\times(-1,0]\to Q$ around $\alpha^*$. In other words, we homotope $h_1:\bigcup_{j\le n}{\{x_j\}\times(-1,0)}\to Q$ to a parameterization of the bi-infinite geodesics $g_j$ spiraling clockwise around $\alpha^*$. By the homotopy extension property, we extend this homotopy to the whole annulus relative to $\alpha\times\{-1\}$. Call the resulting map $h_2:\alpha\times[-1,0)-\{x_1,\cdots,x_n\}\times\{-1\}\to Q$. 

Next, we homotope $h_2:\bigcup_{j\le n}{\alpha_j\times(-1,0)}\to Q$ (relative to the boundary) to a parameterization of the immersed totally geodesic ideal triangle bounded by the bi-infinite geodesics $g_j,\ell_j,g_{j+1}$ obtaining a map $h_3:\alpha\times[-1,0)-\{x_1,\cdots,x_n\}\times\{-1\}\to Q$. Lastly, we extend $h_3$ over the boundary $\alpha\times\{0\}$, the extension $h$ is a parameterization of the geodesic $\alpha^*$.  

The result is an ideal pleated annulus $h:A\to Q$ where $A=\alpha\times[-1,0]-\{x_1,\cdots,x_n\}\times\{-1\}$ and the restriction of $h$ to the $-1,0$-levels coincide with $f$ (formally, with $f\circ\phi_j$ on $\alpha_j$, where $\phi_j:\alpha_j\to\ell_j$ is an orientation preserving homeomorphism) and a parameterization of the geodesic $\alpha^*$ respectively. We denote by $\sigma_A$ the induced path metric on $A$. Endowed with such metric $(A,\sigma_A)$ is a complete hyperbolic surface with totally geodesic boundary and finite area ${\rm Area}(A,\sigma_A)=n\pi$. The boundary component $\alpha\times\{0\}$ has length equal to $\ell(\alpha^*)=L$. 

\begin{figure}[h]
\begin{overpic}[scale=2]{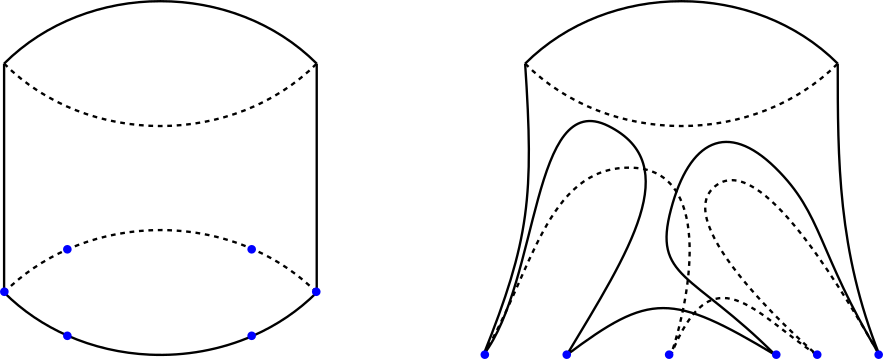}
\put(5,0) {\color{blue}$x_j$}
\put(10,20) {$\alpha\times[-1,0]$}
\put(72,32) {$(A,\sigma_A)$}
\end{overpic}
\caption{Ideal pleated annulus. Non-parabolic case.}
\label{fig:idealannulus}
\end{figure}

\subsubsection{Case $\alpha$ parabolic}
Consider the annulus $A=\alpha\times[-1,1)-\{x_1,\cdots,x_n\}\times\{-1\}\subset Q$. Start with the inclusion $h_0:A\to\bar{Q}$. As before, let $\alpha_j=(x_j,x_{j+1})$ be the $j$-th segment of $\alpha-\gamma_Y$. The inclusion $\iota:\alpha_j\to\bar{Q}$ is properly homotopic to $f:\ell_j\to Q$ (formally, to $f\circ\phi_j$ where $\phi_j:\alpha_j\to\ell_j$ is an orientation preserving homeomorphism). By the homotopy extension property, we extend this homotopy to the whole annulus. Call the resulting map $h_1:\alpha\times[-1,1)-\{x_1,\cdots,x_n\}\times\{-1\}\to Q$.

The proper arcs $h_1:\bigcup_{j\le n}{\{x_j\}\times(-1,1)}\to Q$ are properly homotopic to parameterizations of the bi-infinite geodesic $g_j$ joining the cusp corresponding to $x_j$ to the cusp of $\alpha$. By the homotopy extension property, we extend this homotopy to the whole annulus relative to $\alpha\times\{-1\}$. Call the resulting map $h_2:\alpha\times[-1,1)-\{x_1,\cdots,x_n\}\times\{-1\}\to Q$. Next, we homotope $h_2:\bigcup_{j\le n}{\alpha_j\times(-1,1)}\to Q$ (relative to the boundary) to a parameterization of the ideal triangle bounded by the bi-infinite geodesics $g_j,\ell_j,g_{j+1}$ obtaining a map $h_3:\alpha\times[-1,0)-\{x_1,\cdots,x_n\}\times\{-1\}\to Q$.

The result is an ideal pleated annulus $h:A\to Q$ where $A=\alpha\times[-1,1)-\{x_1,\cdots,x_n\}\times\{-1\}$. The restriction of $h$ to the $-1$-level coincides with $f$ (formally, with $f\circ\phi_j$ on $\alpha_j$, where $\phi_j:\alpha_j\to\ell_j$ is an orientation preserving homeomorphism). Again, we denote by $\sigma_A$ the induced path metric on $A$. Endowed with such metric $(A,\sigma_A)$ is a complete hyperbolic surface with totally geodesic boundary and a cusp corresponding to the end $\alpha\times\{1\}$. As before, the area is ${\rm Area}(A,\sigma_A)=n\pi$.

\subsection{Non-cuspidal segments}
We now select on the ideal concatenation $\ell_1\star\ldots\star \ell_n$ a finite segment $\tau_j\subset\ell_j$ for each component. Those segments lying in $Y$ are the candidates moderate-length surgeries as described at the beginning. In order to select $\tau_j$ we use the thick-thin decomposition of $(S-\gamma_Y,\sigma)$ (the hyperbolic metric induced by the pleated surface $f$).  

Recall that, by our choice of Margulis constant $\ep_0$, a lamination on a finite area hyperbolic surface can intersect the standard $\ep_0$-collar around a cusp only in a ray orthogonal to the boundary of the cusp and going straight towards the end. If $\ell$ is a leaf of $\lambda$, we denote by $\tau=\ell\cap(S-\gamma_Y,\sigma)_0$ where $(S-\gamma_Y,\sigma)_0=(S-\gamma_Y)-\bigcup_{\gamma\subset\gamma_Y}{{\rm\bf cusp}(\gamma,\ep_0)}$. Observe that $\tau$ is a connected subsegment and that $\ell$ is equal to $\tau$ plus two rays entirely contained in the $\ep_0$-cusp neighborhoods of the ends of $S-\gamma_Y$. 

\subsection{No shortcuts}
We make the following purely topological no-shortcuts observation about the segments that we selected. This plays a crucial role in the analysis of the geometry of $A$ analog to the one played by Thurston's Uniform Injectivity (see \cite[Theorem 5.7]{Thu86} or Theorem \cite[Theorem 3.2]{M00}) in the proof of Efficiency of Pleated Surfaces (see \cite[Theorem 3.3]{Thu2} or \cite[Theorem 3.5]{M00}). In fact it can be seen as a topological analog of uniform injectivity. 

\begin{lem}
\label{lem:no homotopy rel endpoints}
Let $f:(S-\gamma_Y,\sigma)\to Q$ and $h:(A,\sigma_A)\to Q$ be as above (see \ref{subsec:annuli}). If $\ell_Y,\ell\subset\partial A$ are connected in $A$ by a properly embedded arc $\xi_A\subset A$ cutting from $A$ a disk $D$ with some points on the boundary removed (corresponding to the ideal points of $A$), then it is not possible to homotope $h(\xi_A)$ relative to the endpoints to an arc $f(\xi_Y)$ where $\xi_Y\subset S-\gamma_Y$ joins the same endpoints of $\xi_A$ on $S-\gamma_Y$.
\end{lem}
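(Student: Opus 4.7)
The plan is to push the hypothetical homotopy from $Q$ down to the base surface $S$ via the natural projection coming from the product structure, and then exploit the minimal position of $\alpha$ with $\gamma_Y$ in $S$ to derive a contradiction. Via the topological identification of $Q$ with an open subset of $S\times(-1,1)$ fixed at the start of the proof, I would introduce the first-factor projection $\pi\colon Q\to S$. Both $f$ and $h$ are constructed as (proper) homotopies of natural product inclusions: $f$ is properly homotopic to $(S-\gamma_Y)\times\{-1/2\}\hookrightarrow Q$, and $h$ is homotopic to the natural inclusion $A\hookrightarrow Q$ (through the homotopies used in the ideal pleated annulus construction). Composing with $\pi$, we obtain $\pi\circ f\simeq\mathrm{id}_{S-\gamma_Y}$ (viewed as $S-\gamma_Y\hookrightarrow S$) and $\pi\circ h$ homotopic to the first-factor projection $A\to\alpha\subset S$.

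Next, I would identify the first-factor image of $\xi_A$. Compactifying $A$ to $\bar A$ by restoring the ideal points, $\bar D$ becomes an honest topological disk with boundary $\xi_A\cup\bar\eta_A$, where $\bar\eta_A\subset\alpha\times\{-1/2\}$ is a sub-arc from $p$ to $q$ passing through the ideal points on $\partial\bar D$. Thus $\xi_A\simeq\bar\eta_A$ rel endpoints in $\bar A$, and projecting this homotopy to the first factor yields that the first-factor image of $\xi_A$ is homotopic rel endpoints in $\alpha$ to the sub-arc $\bar\eta\subset\alpha$ from $p$ to $q$ corresponding to $\bar\eta_A$. By hypothesis $\partial\bar D$ contains at least one ideal point, so $\bar\eta$ crosses $\gamma_Y$ transversely at least once. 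Combining this with the hypothetical homotopy $h(\xi_A)\simeq f(\xi_Y)$ rel endpoints in $Q$, applying $\pi$, and tracking basepoints through the auxiliary homotopies, I would deduce that $\bar\eta\simeq\xi_Y$ rel endpoints in $S$.

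Finally, I would derive the contradiction from minimal position. Lift everything to $\tilde S=\mb{H}^2$, fix a lift $\tilde p$ of $p$, and let $R_0$ be the component of $\tilde S-\widetilde{\gamma_Y}$ containing $\tilde p$. Since $\xi_Y$ is disjoint from $\gamma_Y$, its lift $\tilde\xi_Y$ based at $\tilde p$ stays in $R_0$, so it ends in $R_0$. By the standard characterization of minimal position, any lift of $\alpha$ meets each lift of each component of $\gamma_Y$ in at most one point, so $\tilde{\bar\eta}$ (a sub-arc of a lift of $\alpha$) crosses pairwise distinct lifts of $\gamma_Y$; since the adjacency graph of complementary regions of $\tilde S-\widetilde{\gamma_Y}$ is a tree, the endpoint of $\tilde{\bar\eta}$ lies in a region $R_k\neq R_0$. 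This contradicts $\bar\eta\simeq\xi_Y$ rel endpoints in $S$, which would force the endpoints of the two lifts to coincide. The main obstacle is the basepoint bookkeeping in the second step: the maps $\pi\circ h$ and the first-factor projection on $A$ agree only up to homotopy, so the identifications $\pi h(\xi_A)\leftrightarrow\bar\eta$ and $\pi f(\xi_Y)\leftrightarrow\xi_Y$ require composing the auxiliary homotopies carefully; once the comparison in $S$ is set up, the minimal-position contradiction is entirely standard.
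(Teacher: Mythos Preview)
Your proposal is correct and follows essentially the same route as the paper: reduce to showing that the sub-arc $\bar\eta$ of $\alpha$ (the paper calls it $\bar\zeta$) is homotopic rel endpoints in $S$ to $\xi_Y$, then contradict minimal position of $\alpha$ with $\gamma_Y$. The paper sidesteps your basepoint-bookkeeping obstacle by forming the \emph{loop} $\bar\zeta\star\xi_Y$, pushing $\bar\zeta$ slightly into the disk $D$ so that together with $\xi_A$ it bounds a disk in $A$, and then invoking $\pi_1$-injectivity of $S$ in $Q$ rather than an explicit retraction $\pi$; this packages all the endpoint tracks into a single nullhomotopy and is a bit cleaner than chasing the auxiliary homotopies separately. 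Your universal-cover argument for the final contradiction is just a more explicit rendering of the paper's one-line appeal to minimal position.
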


Before going to the proof, let us explain the heuristic idea. We proceed by contradiction and assume that it is possible to homotope $h(\xi_A)$ to $f(\xi_Y)$ relative to the endpoints. Under this assumption, we will show that the disk $D\subset A$ can be used to homotope $\alpha\subset S$ to a closed loop intersecting $\gamma_Y$ strictly less than $n$ times (essentially, we can remove via a homotopy the intersections $\alpha\cap\gamma_Y$ corresponding to the ideal vertices of $A$ contained in $D$). This is impossible since $i(\gamma_Y,\alpha)=n$. 

To make the above idea more precise, we will introduce several auxiliary arcs in $A$ and $S-\gamma_Y$ and tracks of the homotopy between $\iota$ and $f$. This makes the proof a little bit technical. To help the reader, we include Figure \ref{fig:homotopy} that provides a schematic picture. We also summarize here the topological properties of $f,h$ that we want to use.
\begin{itemize}
    \item{$\bar{Q}=S\times[-1,1]-\gamma_Y\times\{-1\}$ (see \ref{subsec:identifications}).}
    \item{$f$ is a pleated surface properly homotopic to the inclusion $\iota:S-\gamma_Y\to\bar{Q}$ of the boundary $(S-\gamma_Y)\times\{-1\}\subset\partial\bar{Q}$ and mapping geodesically an ideal triangulation formed by a maximal extension of the lamination $\lambda$ obtained from $\alpha-\gamma_Y=\alpha_1\cup\cdots\cup\alpha_n$ by collapsing parallel components to single ones (see Figures \ref{fig:minimalposition} and \ref{fig:lamination}).}
    \item{$A=\alpha\times[-1,0]-\{x_1,\cdots,x_n\}$ where $x_j=\alpha_{j-1}\cap\alpha_j$ (see Figure \ref{fig:idealannulus}).}
    \item{$h:A\to Q$ is a proper map. The restriction of $h$ to $\alpha\times\{0\}$ is a parameterization of the geodesic representative $\alpha^*$ of $\alpha$ in $Q$. The restriction of $h$ to $\alpha_j\times\{-1\}$ coincides with $f\circ\phi_j$ (where $\phi_j:\alpha_j\to\ell_j$ is an orientation preserving homeomorphism between $\alpha_j$ and the corresponding leaf $\ell_j$ of $\lambda$).}
\end{itemize}

\begin{figure}[h]
\begin{overpic}[scale=2]{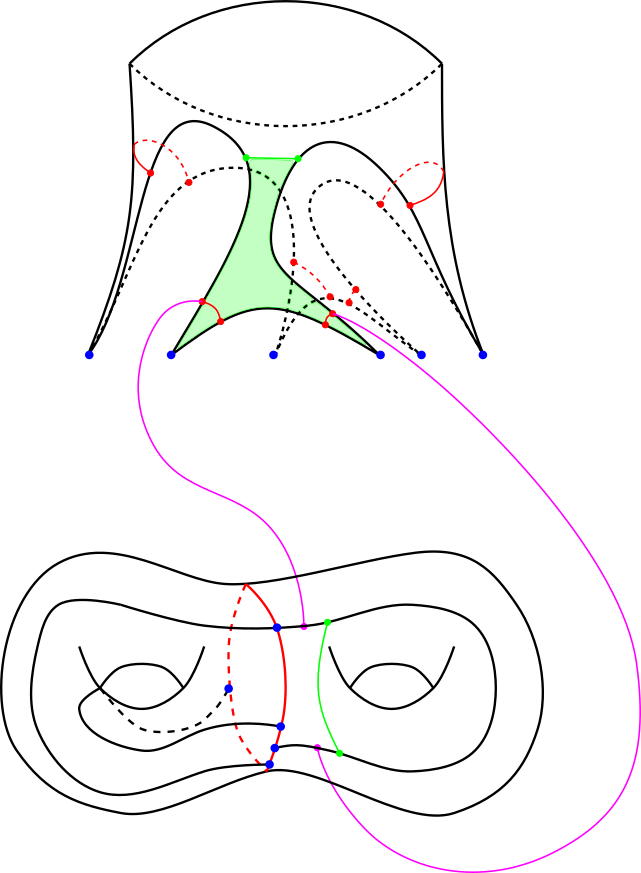}
\put (37,19) {\color{green}$\xi_Y$}   
\put (29,83) {\color{green}$\xi_A$}
\put (17,57) {\color{blue}$x_1$}
\put (19,67) {\color{red}$a_n$}
\put (26,60) {\color{red}$b_n$}
\put (25.5,66) {\color{red}$\eta_n$}
\put (12,48) {\color{purple}$\theta_n^-$}
\end{overpic}
\caption{Schematic picture Lemma \ref{lem:no homotopy rel endpoints}. Above is represented the ideal pleated annulus $h:A\to Q$ and below the pleated surface $f:S-\gamma_Y\to Q$. In red the arcs $\eta_j$ on $A$ with endpoints $a_j,b_j$. Each $\beta_j\subset\alpha_j$ is the subsegment joining $b_{j-1}$ and $a_j$. In purple the tracks of two $a_j,b_j$'s (contained in the cuspidal part of $Q$) under the restriction of the proper homotopy between $f:S-\gamma_Y\to Q$ and $\iota:S-\gamma_Y\to\bar{Q}$. In green the arcs $\xi_A\subset A$ and $\xi_Y\subset S-\gamma_Y$. The former bounds the shaded green disk $D$.}
\label{fig:homotopy}
\end{figure}

\begin{proof}
We argue by contradiction. Suppose that it is possible to homotope $h(\xi_A)$ to $f(\xi_Y)$ relative to the endpoints. The proof has three steps.

The first step of the proof is to replace the ideal concatenation $\ell_1\star\ldots\star\ell_n$ with a suitable finite concatenation by inserting small arcs joining $\ell_j$ to $\ell_{j+1}$ near the ideal point where they meet. Formally, we proceed as follows.

Up to cyclically renumbering the ideal points $x_1,\cdots,x_n$ of $A$, we can assume that the ones contained in the disk $D$ are exactly $x_1,\cdots,x_s$. By properness of $h:A\to Q$ for every $x_j$ there is a neighborhood $V_j$ of $x_j$ in $\alpha\times[-1,0]$ such that $h(V_j)$ is entirely contained in $\mb{C}_{\ep_0}(\gamma_j)$. 

For each $x_j$, let $\eta_j\subset A$ be a properly embedded arc contained in $V_j$ surrounding $x_j$ going from $\alpha_j$ to $\alpha_{j+1}$ (see Figure \ref{fig:homotopy}). Denote by $a_j\in\alpha_j$ and $b_j\in\alpha_{j+1}$ the endpoints of $\eta_j$. Let $\beta_j$ be the subarc of $\alpha_j$ with endpoints given by the terminal endpoint $b_{j-1}$ of $\eta_{j-1}$ and the initial endpoint $a_j$ of $\eta_j$. As the concatenation $\beta_1\star\eta_1\star\ldots\star\beta_n\star\eta_n$ is a simple closed curve of $A$ parallel to $\alpha\times\{0\}$ (see Figure \ref{fig:homotopy}), its image
\[
h(\beta_1)\star h(\eta_1)\star\ldots\star h(\beta_n)\star h(\eta_n)
\]
is homotopic to $\alpha^*=h(\alpha\times\{0\})$. Note also that, thanks to the presence of the disk $D$, the arc $\xi_A$ is homotopic in $A$ relative to the endpoints to the concatenation
\[
\xi_A\simeq\delta_{n}\star\eta_n\star\ldots\star\eta_{s-1}\star\delta_s
\]
where $\delta_{n},\delta_s$ are terminal and initial subsegments of $\beta_n,\beta_s$ (recall that the vertices of $A$ inside $D$ are exactly $x_1,\cdots,x_s$).

We obtained a representation of $\alpha$ as a concatenation $h(\beta_1\star\eta_1\star\ldots\star\beta_n\star\eta_n)$. Let us move to the next step of the proof. Following the homotopy between $f$ and $\iota$ and using the fact that $h(\eta_j)$ is contained in the cuspidal part, we now push this concatenation to a concatenation $\beta'_1\star\eta'_1\star\ldots\star\beta'_n\star\eta'_n$ representing $\alpha$ on $S\times\{-1\}$ where each $\beta_j'$ is a subsegment of the leaf $\ell_j$ of $\lambda$ and each $\eta_j'$ is contained in a component of a tubular neighborhood of $\gamma_Y$. 

We proceed as follows. Recall that the restriction of $h$ to $\alpha_j$ coincides with $f\circ\phi_j$ where $\phi_j:\alpha_j\to\ell_j$ is an orientation preserving homeomorphism. Also recall that $f(\ell_j)$ is properly homotopic $\iota(\ell_j)$ via the restriction to $\ell_j$ of the proper homotopy between $f:S-\gamma_Y\to Q$ and $\iota:S-\gamma_Y\to\bar{Q}$. In particular, by properness of this homotopy and adjusting the segments $\eta_j$ (putting their endpoints $a_j,b_j$ closer to $x_j$ if needed), we make sure that the tracks of the points in $\phi_j(b_{j-1}),\phi_j(a_j)\in\ell_j$ under the proper homotopy between $f(\ell_j)$ and $\iota(\ell_j)$ stay inside $\mb{C}_{\ep_0}(\gamma_j)$. We denote by 
\[
\theta_{j-1}^+,\theta_j^-\subset\mb{C}_{\ep_0}(\gamma_j)
\]
the tracks $\phi_j(b_{j-1}),\phi_j(a_j)$ under the above homotopy. The composition 
\[
\bar{\theta}_{j-1}^+\star f\phi_j(\beta_j)\star\theta_j^-=\bar{\theta}_{j-1}^+\star h(\beta_j)\star\theta_j^-
\]
(where $\bar{\theta}$ denotes the inverse of the path $\theta$) is homotopic relative to the endpoints to the inclusion $\iota(\phi_j(\beta_j))\to\bar{Q}$. Now note that, by our choices, the composition 
\[
\bar{\theta}_j^-\star h(\eta_j)\star\theta_j^+,
\]
is entirely contained in $\mb{C}_{\ep_0}(\gamma_j)$ which is the intersection of a collar neighborhood of $A_{\gamma_j}\times\{-1\}$ with $S\times(-1,1)$ (see \ref{subsec:identifications}). In particular, it is homotopic relative to the endpoints to an arc $\eta'_j\subset A_{\gamma_j}\times\{-1\}$. Thus, we have that
\begin{align*}
&h(\beta_1)\star h(\eta_1)\star\ldots\star h(\beta_n)\star h(\eta_n)\\
&=f\phi_1(\beta_1)\star h(\eta_1)\star\ldots\star f\phi_n(\beta_n)\star h(\eta_n)\\
&=(\bar{\theta}_n^+\star f\phi_1(\beta_1)\star\theta_1^-)\star(\bar{\theta}_1^- h(\eta_1)\star\theta_1^+)\star\ldots\star(\bar{\theta}_{n-1}^+\star f\phi_n(\beta_n)\star\theta_n^-)\star(\bar{\theta}_n^-\star h(\eta_n)\star\theta_n^+)
\end{align*}
is homotopic to
\[
\iota(\phi_1(\beta_1))\star\iota(\eta'_1)\star\ldots\star\iota(\phi_n(\beta_n))\star\iota(\eta'_n)=\iota(\phi_1(\beta_1)\star\eta'_1\star\ldots\star\phi_n(\beta_n)\star\eta'_n).
\]
As $h(\beta_1)\star h(\eta_1)\star\ldots\star h(\beta_n)\star h(\eta_n)$ is homotopic to $\alpha^*$ and the inclusion of $S$ in $S\times[-1,1]$ is $\pi_1$-injective, we conclude that
\[
\phi_1(\beta_1)\star\eta'_1\star\ldots\star\phi_n(\beta_n)\star\eta'_n
\]
is homotopic to $\alpha$. Observe that each $\phi_j(\beta_j)$ is disjoint from $\gamma_Y$ (it is a subsegment of $\ell_j$) and each $\eta'_j$ is contained in an annular neighborhood $A_{\gamma_j}$ of a component $\gamma_j\subset\gamma_Y$ (with endpoints on opposite sides of $\gamma_j$). In particular, up to homotopies relative to the endpoints, each $\eta'_j$ intersects $\gamma_j$ in exactly one point. Thus, the intersections of the concatenation with $\gamma_Y$ occur exactly at $\eta'_j\cap\gamma_j$.  

We reached the third and last step of the proof. Using the fact that $h(\xi_A)$ is homotopic to $f(\xi_Y)$ we will show that in the concatenation $\phi_1(\beta_1)\star\eta'_1\star\ldots\star\phi_n(\beta_n)\star\eta'_n$ representing $\alpha$ on $S\times\{-1\}$ we can replace the concatenation of certain segments with $\xi_Y$ via a homotopy getting rid of $n-s$ intersections with $\gamma_Y$. This will provide the desired contradiction as we found a representative of $\alpha$ on $S$ that intersects $\gamma_Y$ less than $n=i(\alpha,\gamma_Y)$ times.

We proceed as follows. Denote by $\theta_{\rm in},\theta_{\rm out}$ the tracks of the initial and terminal endpoints of $\phi_n(\delta_n)$ and $\phi_{s+1}(\delta_{s+1})$ under the proper homotopy between $f:S-\gamma_Y\to Q$ and $\iota:S-\gamma_Y\to\bar{Q}$. We have that
\[
\bar{\theta}_{\rm in}\star f\phi_n(\delta_n)\star\theta_n^-\quad\text{\rm and }\quad\bar{\theta}_s^+\star f\phi_{s+1}(\delta_{s+1})\star\theta_{\rm out}
\]
are homotopic relative to the endpoints to $\iota(\phi_n(\delta_n))$ and $\iota(\phi_{s+1}(\delta_{s+1}))$. Recall that, by our initial assumption, it is possible to homotope $h(\xi_A)$ to $f(\xi_Y)$ relative to the endpoints. Thus
\[
h(\xi_A)\star f(\bar{\xi}_Y)=(\bar{\theta}_{\rm in}\star h(\xi_A)\star{\theta}_{\rm out})\star(\theta_{\rm out}\star f(\bar{\xi}_Y)\star\theta_{\rm in})
\]
is null-homotopic (again $\bar{\theta},\bar{\xi}$ denote the inverses of $\theta,\xi$). The second path $\bar{\theta}_{\rm out}\star f(\bar{\xi}_Y)\star\theta_{\rm in}$ is homotopic relative to the endpoints to $\iota(\xi_Y)$ (as before, the homotopy is provided by the restriction of the one between $f$ and $\iota$). The first path $\bar{\theta}_{\rm in}\star h(\xi_A)\star\theta_{\rm out}$ is homotopic relative to the endpoints to
\begin{align*}
&\bar{\theta}_{\rm in}\star h(\delta_n)\star\theta_n^-\\
 &\star(\bar{\theta}_n^-\star h(\eta_n)\star\theta_n^+)\star(\bar{\theta}_n^+\star h(\beta_1)\star\theta_1^-)\\
 &\star\ldots\\
 &\star(\bar{\theta}_{s-1}^-\star h(\eta_{s-1})\star\theta_{s-1}^+)\star(\bar{\theta}_{s-1}^-\star h(\delta_s)\star\theta_{\rm out}).
\end{align*}
In turn, as $h=f\circ\phi_j$ on $\beta_j\subset \alpha_j$, the latter coincides with
\begin{align*}
&\bar{\theta}_{\rm in}\star f\phi_n(\delta_n)\star\theta_n^-\\
 &\star(\bar{\theta}_n^-\star h(\eta_n)\star\theta_n^+)\star(\bar{\theta}_n^+\star f\phi_1(\beta_1)\star\theta_1^-)\\
 &\star\ldots\\
 &\star(\bar{\theta}_{s-1}^-\star h(\eta_{s-1})\star\theta_{s-1}^+)\star(\bar{\theta}_{s-1}^-\star f\phi_s(\delta_s)\star\theta_{\rm out})
\end{align*}
which is homotopic relative to the endpoints to
\begin{align*}
&\simeq\iota(\phi_n(\delta_n))\star\iota(\eta'_n)\star\iota(\phi_1(\beta_1))\star\ldots\star\iota(\eta'_s)\star\iota(\phi_s(\delta_s))\\
&=\iota(\phi_n(\delta_n)\star\eta'_n\star\phi_1(\beta_1)\star\ldots\star\eta'_{s-1}\star\phi_s(\delta_s)).
\end{align*}

As $\bar{\theta}_{\rm out}\star f(\bar{\xi}_Y)\star\theta_{\rm in}$ is homotopic relative to the endpoints to $\iota(\bar{\xi}_Y)$, we get that 
\[
\iota(\phi_n(\delta_n)\star\eta'_n\star\phi_1(\beta_1)\star\ldots\star\eta'_{s-1}\star\phi_s(\delta_s)\star\bar{\xi}_Y)
\]
is null-homotopic in $\bar{Q}$. Since the inclusion of $S$ in $S\times[-1,1]$ is $\pi_1$-injective, we conclude that the concatenation is also null-homotopic in $S$ and hence the two paths 
\[
\phi_n(\delta_n)\star\eta'_n\star\phi_1(\beta_1)\star\ldots\star\eta'_{s-1}\star\phi_s(\delta_s)\simeq\xi_Y
\]
are homotopic relative to the endpoints.

We are ready to finish the proof. By the above discussion, we can homotope
\[
(\phi_n(\delta_n)\star\eta_n'\star\phi_1(\beta_1)\star\ldots\star\eta'_{s-1}\star\phi_s(\delta_s))\star\phi_n(\bar{\delta}_s)\star\phi_s(\beta_s)\star\ldots\star\phi_n(\beta_n)\star\phi_n(\bar{\delta}_n)
\]
(representing $\alpha$ on $S$) to $(\xi_Y)\star\phi_s(\bar{\delta}_s)\star\phi_s(\beta_s)\star\ldots\star\phi_n(\beta_n)\star\phi_n(\bar{\delta}_n)$. However, as $\xi_Y\subset S-\gamma_Y$ the concatenation $\xi_Y\star\phi_{s+1}(\beta_{s+1})\star\eta'_{s+1}\star\ldots\star\phi_n(\beta_n)\star\eta_n'$ can intersect $\gamma_Y$ only at $\eta'_j\cap\gamma_j$, so in at most $n-s$ points. Since the concatenation is homotopic to $\alpha$, this contradicts the fact that $i(\alpha,\gamma)=n$.
\end{proof}

\subsection{Detours in the thin part}
We now analyze the behavior with respect to the thick-thin decompositions of $(S-\gamma_Y,\sigma)$ and $Q$ of the leaves of $\partial A$ that are fellow-traveling in $A$. 

More precisely, consider a leaf $\ell_Y\subset\partial A$ contained in $Y$ and a leaf $\ell\subset\partial A$ distinct from $\ell_Y$. Denote by $\tau_Y,\tau$ their non-cuspidal parts. We show that in the hyperbolic metric $(A,\sigma_A)$ these leaves cannot stay $\ep_2$-close for a time longer than $T$ for suitable choices of $\ep_2$ and $T$ (to be determined in the argument below). 

Suppose that $\tau_Y,\tau$ stay $\ep_2$-close in $A$ along subsegments $\overline{\tau}_Y,\overline{\tau}$ of length at least $T$ centered around $y_Y,y$. We first show that if one of the two segments intersects a Margulis tube in $Q$, then both segments intersect the same standard collar in $Y$.

\begin{lem}
\label{lem:ep3thin}
Let $\ep_1=\ep_0^{10}/2^{37}\pi^{8}|\chi(S)|^{16}$ be as in Lemma \ref{lem:ep3thick}. Choose $\ep_3=\ep_1/4$. If $\ep_2<\ep_1/4$ then we have the following. Suppose that there is a segment $\xi_A\subset A$ joining $x_Y\in\overline{\tau}_Y$ to $x\in\overline{\tau}$ of length $\le\ep_2$. If either $f(x_Y)$ or $f(x)$ are in the $\ep_3$-thin part $Q_{\ep_3}$ then $\ell_Y,\ell$ are both contained in $Y$ and $x,x_Y\in{\rm\bf collar}(\beta,\ep_0)$ for a closed geodesic $\beta\subset Y$.
\end{lem}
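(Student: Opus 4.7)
The plan is to transfer the thinness hypothesis at $f(x_Y)$ from $Q$ to the source surface $(S-\gamma_Y,\sigma)$, then argue that $x_Y$ and $x$ must lie in the same standard collar on the source and deduce that it is contained in $Y$. First, since $h\colon (A,\sigma_A)\to Q$ is 1-Lipschitz and agrees with $f$ on the pleating lines under the natural identification, $d_Q(f(x_Y),f(x))\le \ell(\xi_A)\le \ep_2$. Because the injectivity radius on $Q$ is 1-Lipschitz, ${\rm inj}_{f(x)}(Q)\le {\rm inj}_{f(x_Y)}(Q)+\ep_2<\ep_3+\ep_2<\ep_1/2$, so both $f(x_Y)$ and $f(x)$ lie in $Q_{\ep_1}$. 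Applying Lemma~\ref{lem:thick to thick} to the $\pi_1$-injective pleated surface $f$ puts $x_Y,x\in (S-\gamma_Y,\sigma)_{\ep_0}$. By Lemma~\ref{lem:straight in cusp}, any leaf of $\lambda$ enters an $\ep_0$-cusp only as an orthogonal infinite ray, so the non-cuspidal portions $\tau_Y,\tau$ avoid every $\ep_0$-cusp; hence $x_Y$ and $x$ must lie in standard collars of closed geodesics, $x_Y\in{\rm\bf collar}(\beta_Y,\ep_0)$ and $x\in{\rm\bf collar}(\beta,\ep_0)$ on the surface $(S-\gamma_Y,\sigma)$.

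To identify $\beta_Y$ with $\beta$, I would note that the length-$\le\ep_2$ geodesic joining $f(x_Y)$ to $f(x)$ stays at injectivity radius $<\ep_3+\ep_2<\ep_1$, so $f(x_Y)$ and $f(x)$ belong to a common component $C$ of the $\ep_1$-thin part of $Q$. The loop $f(\beta_Y)$ based at $f(x_Y)$ has length $\le 2\ep_0$, being the 1-Lipschitz image of the shortest loop at $x_Y$ inside the $\ep_0$-collar of $\beta_Y$, and therefore represents a power of the abelian generator of $C$ by the Margulis lemma. If $C$ were a cusp, that generator would be parabolic, forcing $\beta_Y$ to be isotopic in $S$ to a component of $\gamma_Y$ or to $\alpha$; but $\beta_Y$ is disjoint from $\gamma_Y$ and essential non-peripheral on $(S-\gamma_Y,\sigma)$, while $\alpha$ intersects $\gamma_Y$ essentially by hypothesis, so both possibilities are excluded. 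Hence $C=\mb{T}_{\ep_1}(\gamma^*)$ for a closed geodesic $\gamma^*\subset Q$, and primitivity of the simple closed curves $\beta_Y,\beta$ together with $\pi_1$-injectivity of $f$ forces $f(\beta_Y)$ and $f(\beta)$ both to equal $\gamma^{*\pm 1}$.

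Consequently $\beta_Y$ and $\beta$ are freely homotopic in $\pi_1(Q)=\pi_1(S)$. The standard subgroup structure of surface groups says that two essential non-peripheral simple closed curves lying in disjoint essential subsurfaces of $S$ cannot be conjugate in $\pi_1(S)$, so $\beta_Y$ and $\beta$ must lie in the same component of $S-\gamma_Y$, where they are freely homotopic and therefore equal as geodesic isotopy classes of $(S-\gamma_Y,\sigma)$. Since $\ell_Y\subset Y$ contains $x_Y\in{\rm\bf collar}(\beta_Y,\ep_0)$ and standard $\ep_0$-collars of $(S-\gamma_Y,\sigma)$ lie inside a single component of $S-\gamma_Y$, we conclude $\beta=\beta_Y\subset Y$; then $x\in\ell$ sits in the same collar inside $Y$, so by connectedness of the leaf $\ell$ within $S-\gamma_Y$ we also obtain $\ell\subset Y$. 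The main obstacle is the Margulis separation step: the choices $\ep_3=\ep_1/4$ and $\ep_2<\ep_1/4$ must be tight enough to pin both image points into a single Margulis component $C$ and to match it with a single closed geodesic $\gamma^*$; once this is done, everything else is a careful bookkeeping of definitions.
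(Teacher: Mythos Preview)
Your argument follows essentially the same route as the paper's: transfer thinness from $Q$ to the surface via Lemma~\ref{lem:thick to thick}, observe that both points land in standard collars (not cusps) because $\tau_Y,\tau$ are by definition the non-cuspidal segments, and then use that $f(x_Y),f(x)$ lie in a common Margulis component of $Q$ to conclude that the collar cores $\beta_Y,\beta$ are freely homotopic in $S$ and hence coincide as a closed geodesic inside $Y$. The symmetry reduction (you tacitly assume $f(x_Y)\in Q_{\ep_3}$) and the Lipschitz bound on the injectivity radius are exactly what the paper uses.

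There is one small gap. In your second paragraph you exclude the possibility that the common component $C$ is a cusp by asserting that the only parabolics in $Q$ are $\gamma_Y$ and possibly $\alpha$. In the generality of Theorem~\ref{thm:effeff} nothing rules out further accidental parabolics in $Q$, so this claim is not justified. Fortunately the exclusion is unnecessary: whether $C$ is a Margulis tube or a rank-one cusp, $\pi_1(C)$ is infinite cyclic with a primitive generator represented by a simple closed curve on $S$, and the short loops $f(\beta_Y),f(\beta)$ of length $\le 2\ep_0$ represent powers of it; primitivity then forces $\beta_Y$ and $\beta$ to be freely homotopic in $\pi_1(Q)=\pi_1(S)$ exactly as you conclude. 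Note also that the lemma only asks $\beta$ to be a closed geodesic in the \emph{surface} $(Y,\sigma)$, not in $Q$, so the cusp case requires no separate treatment. Simply drop the sentence ruling out cusps and proceed directly to ``$\beta_Y$ and $\beta$ are freely homotopic in $\pi_1(S)$''.
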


\begin{proof}
As the argument is symmetric we will only discuss the case of $\overline{\tau}_Y$. Note that a component of the $\ep_3$-thin part $Q_{\ep_3}$ is either a Margulis tube $\mb{T}_{\ep_3}(\beta)$ around a closed geodesic $\beta$ or a rank one cuspidal neighborhood $\mb{C}_{\ep_3}(\beta)$ for some accidental parabolic $\beta\subset S$ .

Suppose that $f(x_Y)\in Q_{\ep_3}$ (and lies in a component $\mb{T}_{\ep_3}(\beta)$ or $\mb{C}_{\ep_3}(\beta)$). By Lemma \ref{lem:thick to thick}, necessarily we have $x_Y\in(Y,\sigma)_{\ep_0}$ in a component of the form ${\rm\bf collar}(\beta',\ep_0)$ where $\beta'$ is homotopic to $\beta$ (recall that $\tau_Y$ does not intersect the cusps).

By assumption, $x\in\overline{\tau}$ is a point at distance at most $\ep_2$ from $x_Y$ in $A$. Thus $f(x),f(x_Y)$ are at distance at most $\ep_2$ (as $h:(A,\sigma_A)\to Q$ is 1-Lipschitz). By our choices of Margulis constants, as $2(\ep_2+\ep_3)<\ep_1$, the $\ep_2$-neighborhood of a point in $Q_{\ep_3}$ is still contained in $Q_{\ep_1}$ in same component as $f(x_Y)$. Again, by Lemma \ref{lem:thick to thick}, only the $\ep_0$-thin part of $(S-\gamma_Y,\sigma)$ can enter the $\ep_1$-thin part of $Q$. Thus $x\in(S-\gamma_Y,\sigma)_{\ep_0}$ as well in a component of the form ${\rm\bf collar}(\beta'',\ep_0)$ where $\beta''$ is homotopic to $\beta$. In particular, $\beta',\beta''$ are both homotopic to $\beta$ up to passing to the inverse. As $Q$ is homotopy equivalent to $S$, this can only happen if $\beta',\beta''$ are homotopic in $S$ and hence $\ell_Y,\ell$ are both contained in the component $Y$. By the above discussion, we have $x_Y\in{\rm\bf collar}(\beta,\ep_3)$ and $x\in{\rm\bf collar}(\beta,\ep_0)$.    
\end{proof}

We now argue that, if $\ep_2$ is smaller than an explicit threshold, the above lemma allows us to produce a shortcut in the ideal concatenation $\ell_1\star\ldots\star\ell_n$ (violating thus Lemma \ref{lem:no homotopy rel endpoints}). We deduce that $\overline{\tau}_Y,\overline{\tau}$ cannot meet the $\ep_3$-thin part $Q_{\ep_3}$ (and hence they also avoid $\ep_3$-standard collars in $Y$).

\begin{lem}
\label{lem:short bridge1} 
Let $\ep_1=\ep_0^{10}/2^{37}\pi^{8}|\chi(S)|^{16}$, $\ep_3=\ep_1/4$, and $\ep_2<\ep_3$ be as above. Suppose that there is a segment $\xi_A\subset A$ joining $x_Y\in\overline{\tau}_Y$ to $x\in\overline{\tau}$ of length $\le\ep_2$. Then both $f(x_Y),f(x)$ are contained in the complement of the $\ep_3$-thin part $Q_{\ep_3}$. 
\end{lem}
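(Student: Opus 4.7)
The plan is a proof by contradiction using the no-shortcut principle of Lemma~\ref{lem:no homotopy rel endpoints}. Suppose for contradiction that $f(x_Y) \in Q_{\ep_3}$; the case $f(x) \in Q_{\ep_3}$ is symmetric. By Lemma~\ref{lem:ep3thin}, both leaves $\ell_Y, \ell$ are contained in $Y$, and there is a closed geodesic $\beta \subset Y$ such that both $x_Y$ and $x$ lie in a single standard collar ${\rm\bf collar}(\beta, \ep_0) \subset (S - \gamma_Y, \sigma)$.

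I would then construct a shortcut arc $\xi_Y \subset {\rm\bf collar}(\beta, \ep_0) \subset Y$ joining $x_Y$ to $x$. Since the collar is topologically an annulus with $\pi_1 = \mb{Z}$ generated by $\beta$, the homotopy class of $\xi_Y$ rel endpoints is determined by its winding number around $\beta$. The pleated map $f$ sends $\beta$ to a loop of length less than $2\ep_0$ in $Q$ freely homotopic to the geodesic representative $\beta_Q$, and by Lemma~\ref{lem:ep3thin} both endpoints $f(x_Y) = h(x_Y)$ and $f(x) = h(x)$ lie in the Margulis tube $\mb{T}_{\ep_1}(\beta_Q) \subset Q$. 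Since $\pi_1(\mb{T}_{\ep_1}(\beta_Q))$ is infinite cyclic generated by $\beta_Q$, any two paths in the tube with common endpoints differ by a power of the core; choosing the winding number of $\xi_Y$ appropriately then forces $f(\xi_Y) \simeq h(\xi_A)$ rel endpoints in $Q$.

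To invoke Lemma~\ref{lem:no homotopy rel endpoints} I must verify that $\xi_A$ cuts off from $A$ a disk with some boundary ideal points removed. Since $\xi_A$ has length at most $\ep_2 < \ep_0$ and joins two distinct geodesic boundary components of the hyperbolic annulus $A$, it is typically too short to wrap non-trivially around the core geodesic $\alpha^*$ (or the top cusp in the parabolic case), so one side of $A - \xi_A$ is a topological disk whose boundary consists of $\xi_A$, sub-arcs of $\ell_Y$ and $\ell$, and the finitely many cusps of $A$ between them. Applying Lemma~\ref{lem:no homotopy rel endpoints} then produces the desired contradiction.

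The main obstacle I anticipate is precisely the disk condition above: if the core $\alpha^*$ of $A$ is itself very short (for instance when $L = \ell(\alpha^*)$ is small), the short bridge $\xi_A$ could a priori wind around $\alpha^*$ and fail to separate $A$ into a disk and a non-disk piece. Handling this potential issue may require either a separate argument based on the pleated annulus construction (the spirals around $\alpha^*$ control the geometry of short arcs) or restricting attention to a specific $\xi_A$ such as a perpendicular minimizing geodesic arc between $\ell_Y$ and $\ell$.
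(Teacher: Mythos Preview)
Your approach matches the paper's: argue by contradiction, invoke Lemma~\ref{lem:ep3thin} to place both $x_Y$ and $x$ in ${\rm\bf collar}(\beta,\ep_0)$, build an arc $\xi_Y$ in $Y$ whose image together with $h(\xi_A)$ lies in a single thin component of $Q$ (the paper notes this could be either a tube $\mb{T}_{\ep_0}(\beta)$ or a cusp neighborhood $\mb{C}_{\ep_0}(\beta)$; you should allow both), correct $\xi_Y$ by a power of $\beta$ so that $f(\xi_Y)\simeq h(\xi_A)$ rel endpoints, and contradict Lemma~\ref{lem:no homotopy rel endpoints}.

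The obstacle you flag at the end is not real. Both $\ell_Y$ and $\ell$ lie on the \emph{same} boundary component of the topological annulus $A$, namely $\alpha\times\{-1/2\}$ with the ideal points removed; the other boundary component is $\alpha\times\{1/2\}$ (or the top cusp in the parabolic case). A properly embedded arc in an annulus with both endpoints on one boundary circle always separates, and an Euler characteristic count forces one complementary piece to be a disk. The arc $\xi_A$ may be taken to be a minimizing geodesic in $(A,\sigma_A)$ between $x_Y$ and $x$, hence embedded; no hypothesis on $\ell(\alpha^*)$ is needed. Finally, because $x_Y$ and $x$ lie on distinct leaves $\ell_j$ of the bottom boundary, the sub-arc of $\alpha\times\{-1/2\}$ bounding that disk necessarily contains at least one ideal vertex $x_k$, which is exactly what Lemma~\ref{lem:no homotopy rel endpoints} needs. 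The paper's proof uses this without comment; once you observe it, your argument is complete.
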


Note before we start the proof that, since $f:(S-\gamma_Y,\sigma)\to Q$ is 1-Lipschitz and $\pi_1$-injective, the above implies that $x_Y,x$ are not contained in the $\ep_3$-thin part of $(S-\gamma_Y,\sigma)$.

\begin{proof}
We proceed by contradiction and assume that one of the points $f(x_Y),f(x)$ is contained in $Q_{\ep_3}$ in a component of the form $\mb{T}_{\ep_3}(\beta)$ (or $\mb{C}_{\ep_3}(\beta)$). Again, the argument is symmetric, so we will discuss only the case of $x_Y\in\overline{\tau}_Y$.

By Lemma \ref{lem:ep3thin}, if $f(x_Y)\in Q_{\ep_3}$, then $\overline{\tau}$ is contained in $Y$ and both $x,x_Y$ are contained in ${\rm\bf collar}(\beta,\ep_0)$ for some closed geodesic $\beta\subset Y$.

Let $\xi_Y\subset{\rm\bf collar}(\beta,\ep_0)$ be an arc joining $x_Y$ to $x$ on ${\rm\bf collar}(\beta,\ep_0)$. The image of the concatenation $\xi_A\star\xi_Y$ is contained in $\mb{T}_{\ep_0}(\beta)$ (or $\mb{C}_{\ep_0}(\beta)$) because $\xi_Y$ is in ${\rm\bf collar}(\beta,\ep_0)$ and $\xi_A$ has length $\le\ep_2$ and has an endpoint in the $\ep_3$-thin part. Hence $\xi_A\star\xi_Y$ is homotopic to $\beta^q$ for some $q\in\mb{Z}$. In particular, if $\overline{\beta}$ is a loop based at $x_Y$ homotopic to $\beta$, we have that $\overline{\beta}^{-q}\star\xi_Y$ (which is an arc in $Y$) is homotopic relative to the endpoints to $\xi_A$. This violates Lemma \ref{lem:no homotopy rel endpoints}.    
\end{proof}

We conclude the following.

\begin{cor}
\label{cor:shortbridge2}
$f(\overline{\tau}_Y),f(\overline{\tau})$ (resp. $\overline{\tau}_Y,\overline{\tau}$) are both contained in the $\ep_3$-thick part of $Q$ (resp. $(S-\gamma_Y,\sigma)$).    
\end{cor}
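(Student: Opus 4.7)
The corollary is meant to be a direct consequence of Lemma \ref{lem:short bridge1} once one unpacks the fellow-traveling hypothesis, so the plan is essentially an accounting exercise.

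First I would translate the setup: by assumption $\overline{\tau}_Y$ and $\overline{\tau}$ stay $\ep_2$-close in $(A,\sigma_A)$, which means that for every $x_Y\in\overline{\tau}_Y$ there exists $x\in\overline{\tau}$ with $d_A(x_Y,x)\le \ep_2$, and symmetrically every $x\in\overline{\tau}$ admits such a companion in $\overline{\tau}_Y$. Pick any geodesic segment $\xi_A\subset A$ realizing the distance between such a pair. Then the hypotheses of Lemma \ref{lem:short bridge1} are satisfied, and the conclusion is that both $f(x_Y)$ and $f(x)$ lie in the complement of $Q_{\ep_3}$. Letting $x_Y$ range over $\overline{\tau}_Y$ (respectively $x$ over $\overline{\tau}$) gives that $f(\overline{\tau}_Y)$ and $f(\overline{\tau})$ both avoid $Q_{\ep_3}$, which is the first half of the statement.

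For the parenthetical claim, I would use the remark stated right after Lemma \ref{lem:short bridge1}. Concretely, suppose $x\in (S-\gamma_Y,\sigma)_{\ep_3}$, so there is an essential loop $\beta$ based at $x$ with $\ell_\sigma(\beta)<2\ep_3$. Since $f$ is $1$-Lipschitz, $f(\beta)$ is a loop based at $f(x)$ of length strictly less than $2\ep_3$; by $\pi_1$-injectivity of $f$ (which holds because $f$ is the pleated surface homotopic to the inclusion $(S-\gamma_Y)\times\{-1/2\}\hookrightarrow Q$), this loop represents a nontrivial element of $\pi_1(Q,f(x))$. Hence ${\rm inj}_{f(x)}(Q)<\ep_3$, i.e.\ $f(x)\in Q_{\ep_3}$. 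Contrapositively, the first half forces $\overline{\tau}_Y$ and $\overline{\tau}$ themselves to lie in the $\ep_3$-thick part of $(S-\gamma_Y,\sigma)$.

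There is no real obstacle here; the work was done in Lemma \ref{lem:short bridge1}, and the only subtlety worth being careful about is that the $\ep_2$-fellow-traveling assumption produces a pair $(x_Y,x)$ witnessing the hypothesis of that lemma for \emph{every} point of either arc, which is what converts a pointwise statement into the set-wise containment asserted in the corollary.
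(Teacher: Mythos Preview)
Your proposal is correct and follows essentially the same approach as the paper: unpack the $\ep_2$-fellow-traveling to produce, for each point, a short bridge arc $\xi_A$ feeding into Lemma~\ref{lem:short bridge1}, and then use the $1$-Lipschitz, $\pi_1$-injective property of $f$ (the remark recorded just after Lemma~\ref{lem:short bridge1}) to transfer the thick-part conclusion from $Q$ back to $(S-\gamma_Y,\sigma)$. The paper's own proof is terser but identical in content.
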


\begin{proof}
Recall that by our choices $\overline{\tau}_Y$ stays $\ep_2$-close to $\overline{\tau}$ in $A$. Thus for every $x_Y\in\overline{\tau}_Y$ there is an arc $\xi_A\subset A$ of length $\ep_2$ joining $x_y$ to $x\in\overline{\tau}$. By Lemma \ref{lem:short bridge1}, we get that $f(x_Y)$ (resp. $x_Y$) is not contained in $Q_{\ep_3}$ (resp. $(Y,\sigma)_{\ep_3}$).    
\end{proof}

\subsection{Recurrence in the thick part}
Once we established that the non-cuspidal segments $\tau_j\subset\ell_j$ can only $\ep_2$-fellow-travel (on $A$) in the $\ep_3$-thick part (of $Q$ and $(S-\gamma_Y,\sigma)$) we want to control the amount of time they could stay close together in $A$. We do so by exploiting the recurrence and Anosov closing Lemmas \ref{lem:recurrence} and \ref{lem:quasi-geodesic} in combination with the no-shortcuts Lemma \ref{lem:no homotopy rel endpoints}. 

\begin{lem}
\label{lem:ep3thick}  
Let $\ep_1=\ep_0^{10}/2^{37}\pi^{8}|\chi(S)|^{16}$, $\ep_3=\ep_1/4$, and $\ep_2=\ep_3/8$. Suppose that $\overline{\tau}_Y,\overline{\tau}$ stay $\ep_2$-close on $A$ and have length $T$. We have 
\[
T\le T(\pi/6)\frac{16\pi^2|\chi(Y)||\chi(S-Y)|}{{\rm vol}(\ep_3/16)^2}
\]
where $T(\pi/6)$ is the constant of Lemma \ref{lem:quasi-geodesic} and ${\rm vol}(\bullet)$ is the volume of a ball of given radius in $T^1\mb{H}^2$.
\end{lem}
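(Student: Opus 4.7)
The strategy is proof by contradiction: if $T$ exceeds the claimed bound, I would combine recurrence (Lemma \ref{lem:recurrence}) with Anosov closing (Lemma \ref{lem:quasi-geodesic}) to produce a shortcut between $\ell_Y$ and $\ell$ forbidden by Lemma \ref{lem:no homotopy rel endpoints}.

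First, I would sample along $\overline{\tau}_Y$ at intervals of $T(\pi/6)$, giving $m=\lfloor T/T(\pi/6)\rfloor$ times $t_1<\cdots<t_m$. By the fellow-traveling hypothesis, each $t_k$ admits a corresponding time $s_k$ on $\overline{\tau}$ at $A$-distance at most $\ep_2$. By Corollary \ref{cor:shortbridge2}, both $\overline{\tau}_Y$ and $\overline{\tau}$ lie in the $\ep_3$-thick parts of $Y$ and of the (possibly equal) subsurface $Y'\subset S-\gamma_Y$ containing $\ell$; since $\ep_2=\ep_3/8<\ep_3/2$, Lemma \ref{lem:recurrence} applies with parameter $\ep_2$, and $\ep_2/2=\ep_3/16$. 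A first application to the velocities $\overline{\tau}_Y'(t_k)\in T^1Y$ extracts a subset of cardinality at least $\lfloor\frac{{\rm vol}(\ep_3/16)}{2\pi|\chi(Y)|}m\rfloor$ with pairwise $\ep_2$-close velocities; a second application to the corresponding $s_k$'s (whose consecutive separations on $\overline{\tau}$ remain at least $T(\pi/6)-2\ep_2\geq 1$) refines this to a subset $I$ with
\[
|I|\ \geq\ \left\lfloor\frac{{\rm vol}(\ep_3/16)^2}{(2\pi)^2|\chi(Y)||\chi(Y')|}\,m\right\rfloor.
\]

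Next, assuming $|I|\geq 2$, I would pick $k_1<k_2\in I$. Then $t_{k_2}-t_{k_1}\geq T(\pi/6)$ and the endpoint velocities of $\overline{\tau}_Y|_{[t_{k_1},t_{k_2}]}$ are $\ep_2$-close, so a short geodesic closing arc in $Y$ satisfies the Anosov closing hypotheses and produces a closed geodesic $\beta_Y\subset Y$; the analogous construction on $\overline{\tau}$ yields $\beta\subset Y'$. On $A$, fellow-traveling furnishes short arcs $\xi_A^1,\xi_A^2$ from $\overline{\tau}_Y(t_{k_i})$ to $\overline{\tau}(s_{k_i})$, and the loop $\overline{\tau}_Y|_{[t_{k_1},t_{k_2}]}\star\xi_A^2\star\overline{\tau}|_{[s_{k_1},s_{k_2}]}^{-1}\star(\xi_A^1)^{-1}$ bounds a disk on $A$, so its $h$-image is null-homotopic in $Q$. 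Combining this null-homotopy with the Anosov-closing description forces $\beta_Y$ and $\beta$ to be freely homotopic in $\pi_1(Q)=\pi_1(S)$. If $Y\neq Y'$, an essential closed curve of $Y$ cannot be freely homotoped in $S$ into a disjoint subsurface $Y'$ without being peripheral to $Y$, contradicting that $\beta_Y$ lies in the $\ep_3$-thick part of $Y$. If $Y=Y'$, then $\pi_1$-injectivity of $Y\hookrightarrow S$ identifies $\beta_Y=\beta$ in $\pi_1(Y)$, and the null-homotopy on $A$ converts $h(\xi_A^1)$ into $f(\xi_Y)$ for some arc $\xi_Y\subset Y\subset S-\gamma_Y$ with the same endpoints, which is exactly the shortcut ruled out by Lemma \ref{lem:no homotopy rel endpoints}.

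In either case we reach a contradiction, so $|I|\leq 1$. Bounding $|\chi(Y')|\leq|\chi(S-Y)|$ when $Y'\neq Y$ (and arguing symmetrically when $Y'=Y$) and absorbing the floor-function slack into a factor of $2$, I obtain $m\leq\frac{8\pi^2|\chi(Y)||\chi(S-Y)|}{{\rm vol}(\ep_3/16)^2}$, and multiplying by $T(\pi/6)$ gives the claimed bound on $T$. The main obstacle will be the final shortcut identification, especially in the case $Y=Y'$: one must carefully extract the arc $\xi_Y\subset Y$ from the null-homotopy on $A$ together with the identity $\beta_Y=\beta$ in $\pi_1(Y)$, bypassing any appeal to Thurston's Uniform Injectivity Theorem, which is precisely what this topological approach is designed to avoid.
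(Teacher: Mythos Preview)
Your strategy is essentially the paper's: contradiction via double recurrence, Anosov closing on both fellow-traveling segments, and a shortcut violating Lemma~\ref{lem:no homotopy rel endpoints}. The sampling, the two applications of Lemma~\ref{lem:recurrence}, and your case split $Y'=Y$ versus $Y'\neq Y$ all match (the paper leaves the $Y'\neq Y$ case implicit, but it is the same non-peripherality observation you give).

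The one place your sketch is genuinely incomplete is exactly the step you flagged. Knowing that $\beta_Y$ and $\beta$ are \emph{freely} homotopic in $Y$ (when $Y'=Y$) together with the rectangle null-homotopy on $A$ does not by itself give a homotopy of $h(\xi_A^1)$ rel endpoints to an arc $f(\xi_Y)$; the free homotopy in $Y$ and the one coming through $A$ can differ by a twist. The paper closes this gap with a torus argument: glue the rectangle $R\subset A$ (together with the short rectangle $R'$ coming from the null-homotopy of the length-$\le\ep_3/2$ loop $\xi_A^1\star\mu^{-1}\star(\xi_A^2)^{-1}\star\mu_Y$) to an immersed annulus $h_Y:\mb{S}^1\times[-1,0]\to Y$ realizing the free homotopy $\eta_Y\star\mu_Y\simeq\eta\star\mu$ in $Y$. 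This produces an immersed torus $U\to Q$. Since $\pi_1(Q)=\pi_1(S)$ has no $\mb{Z}^2$, the image of $\pi_1(U)$ is an infinite cyclic group $\langle\beta\rangle$, and because $\eta_Y\star\mu_Y\in\pi_1(Y)$ is a power of $\beta$, in fact $\beta\in\pi_1(Y)$. Now the concatenation $\xi_A^1\star\xi_Y$ (with $\xi_Y$ any transversal of the $Y$-annulus) is a loop on $U$, hence represents some $\beta^q$; rewriting gives $h(\xi_A^1)\simeq f(\beta_{x_Y}^{q}\star\xi_Y^{-1})$ rel endpoints, an arc entirely in $Y$, contradicting Lemma~\ref{lem:no homotopy rel endpoints}. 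This torus trick is the missing ingredient in your sketch; once you have it, your argument is complete and matches the paper's.

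A minor bookkeeping point: when $Y'=Y$ your double recurrence yields $|\chi(Y)|^2$ rather than $|\chi(Y)||\chi(S-Y)|$, so ``arguing symmetrically'' does not literally recover the stated bound. The paper has the same imprecision; it does not matter downstream since both are absorbed into $|\chi(S)|^2$ in the length--area step.
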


\begin{proof}
Again, we argue by contradiction. Suppose that $T$ is larger than the threshold above. Consider $V_Y:=\{v_Y^1,\cdots,v_Y^m\}$ the velocities of $\overline{\tau}_Y$ sampled at times of the form $2kT(\pi/6)$ for $k\le m$ and $m=\lfloor T/2T(\pi/6)\rfloor$ where $T(\pi/6)$ is the constant of Lemma \ref{lem:quasi-geodesic}. By Lemma \ref{lem:recurrence}, we find $w_Y^1,\cdots,w_Y^n\subset V_Y$ with 
\[
n=\left\lfloor\frac{{\rm vol}(\ep_3/16)}{2\pi|\chi(Y)|}\lfloor T/2T(\pi/6)\rfloor\right\rfloor
\]
such that $d_{T^1Y}(w_Y^p,w_Y^q)\le\ep_3/8$ for every $p,q\le n$. 

Consider the corresponding set of velocities $v^j$ of $\overline{\tau}$ such that $v^j$ and $w_Y^j$ are $\ep_2$-close. Again, consider $V:=\{v^1,\cdots,v^n\}$. Again, by Lemma \ref{lem:recurrence}, since by our initial assumption we have
\[
n'=\left\lfloor\frac{{\rm vol}(\ep_3/16)}{2\pi|\chi(S-Y)|}\left\lfloor\frac{{\rm vol}(\ep_3/16)}{2\pi|\chi(Y)|}\lfloor T/2T(\pi/6)\rfloor\right\rfloor\right\rfloor\ge 2,
\]
there are at least two velocities $v^p,v^q$ such that $d_{T^1Y}(v^p,v^q)<\ep_3/8$.

Let $\eta_Y\subset\overline{\tau}_Y,\eta\subset\overline{\tau}$ be the sub-intervals between $w_Y^p,w_Y^q$ and between $v^p,v^q$ (along $\overline{\tau}_Y,\overline{\tau}$) respectively. By the above discussion, the endpoints of $\eta_Y$ and of $\eta$ are very close in $Y$ and even in the unit tangent bundle $T^1Y$ (if we consider the initial and terminal velocities). Let $\mu_Y,\mu$ be short geodesics of length $\le\ep_3/8$ joining the endpoints of $\eta_Y,\eta$ respectively and such that the initial and terminal velocities of $\mu_Y,\mu$ have angular distance at most $\ep_3$ from the corresponding velocities of $\eta_Y,\eta$.

By Lemma \ref{lem:quasi-geodesic}, the concatenations $\eta_Y\star\mu_Y$ and $\eta\star\mu$ are not null-homotopic.  

The segments $\eta_Y,\eta$ cobound a rectangle $R$ in $A$ with arcs $\xi_A,\xi_A'\subset A$ of length $\le\ep_2$ joining their initial and terminal points. Consider the concatenation $\xi_A\star\mu\star\xi_A'\star\mu_Y$. It is a loop of length $\le 2(\ep_3/8+\ep_2)=\ep_3/2$. In particular, by Corollary \ref{cor:shortbridge2}, it is null-homotopic in $Q$ (both $\overline{\tau}_Y,\overline{\tau}$ are contained in the $\ep_3$-thick part of $Q$) and we can find an immersed rectangle $R'$ bounded by $\xi_A,\mu,\xi_A',\mu_Y$. Gluing $R$ to $R'$ we find an immersed annulus $h':\mb{S}^1\times[0,1]\to R\cup R'\subset Q$ between $\eta_Y\star\mu_Y$ and $\eta\star\mu$. With a little abuse of notation we denote by $\xi_A\subset\mb{S}^1\times[0,1]$ the arc corresponding to $\xi_A$ under the chosen parameterization $h'$. Thus the two concatenations $\eta_Y\star\mu_Y$ and $\eta\star\mu$ are homotopic in $Q$. As $Q$ is homotopy equivalent to $S$, we conclude that $\eta_Y\star\mu_Y$ and $\eta\star\mu$ are homotopic also in $Y$. Let $h_Y:\mb{S}^1\times[-1,0]\to Y$ be an immersed annulus between the two.

We now produce a shortcut using the two annuli.

We glue the two annuli in a 2-torus $U:=\mb{S}^1\times[-1,1]/(-1\sim 1)$ and we glue $fh_Y$ and $h'$ to get an immersed 2-torus $fh_Y\cup h':U\to Q$. The image ${\rm Im}(\pi_1(U)\to\pi_1(Q))$ is abelian and contains the infinite cyclic subgroup generated by the (homotopic) loops $\eta_Y\star\mu_Y$ and $\eta\star\mu$, so, it is an infinite cyclic subgroup $\langle\beta\rangle$ containing $\eta_Y\star\mu_Y$. Let $\xi_Y\subset \mb{S}^1\times[-1,0]$ be an arc starting at $x_Y\in\eta_Y$ and ending at $x\in\eta$. Consider now the loop $\delta=\xi_A\star\xi_Y$. As the image of $\pi_1(U)$ coincides with an infinite cyclic subgroup $\langle\beta\rangle$ containing $\eta_Y\star\mu_Y$, we have that $\delta=\beta^q$ for some $q$. Note that, as we also have $\beta^k=\eta_Y\star\mu_Y$ for some $k$, necessarily $\beta\in\pi_1(Y)$ and hence we can find a loop $\beta_{x_Y}$ in $Y$ based at $x_Y$ representing it. As the loop $\delta\star\beta_{x_Y}^{-q}\in\pi_1(Y)$ is null-homotopic we have that $\xi_A$ is homotopic relative to the endpoints to $\xi_Y\star\beta_{x_Y}^{-q}$. This contradicts again Lemma \ref{lem:no homotopy rel endpoints}.    
\end{proof}

\subsection{Length-area argument}
We now have all the ingredients to find one non-cuspidal segment $\tau_j\subset\ell_j$ contained in $Y$ and of moderate length.

Define a {\em $\ep_2$-rectangle} between $\tau_i\subset\ell_i,\tau_j\subset\ell_j$ to be the region of $(A,\sigma_A)$ between the two segments where the distance in $A$ between the two is smaller than $\ep_2$. The sides of a rectangle contained in $\tau_i,\tau_j$ are the {\em horizontal sides} and the other two are the {\em vertical sides}.

Let $\mc{R}$ be the collection of $\ep_2$-rectangles in $A$ whose boundary consists of two horizontal segments contained one in the non-cuspidal part $\tau_Y$ of a leaf $\ell_Y$ of $\partial A\cap Y$ and the other in a different leaf of $\partial A$. Note that each $R\in\mc{R}$ separates $A$ into two components, one of which is a disk containing at least one of the ideal vertices of $A$ and the other is an annulus. So the first $\ep_2$-rectangle splits non-trivially the set of ideal vertices of $A$ into two non-empty sets. The second $\ep_2$-rectangle further splits non trivially one of these two sets into two smaller sets, and so on and so forth. It follows that the number of $\ep_2$-rectangles, denoted by $|\mc{R}|$, is at most the number of ideal vertices $n$. 

For every leaf $\ell_Y\subset\partial A\cap Y$ denote by $n(\ell_Y)$ the number of $\ep_2$-rectangles $R\in\mc{R}$ with one horizontal side on $\ell_Y$ and the other horizontal side on a leaf of $\partial A-Y$ and by $m(\ell_Y)$ the number of $\ep_2$-rectangles with a horizontal side on $\ell_Y$ the other horizontal side on a leaf of $\partial A\cap Y$. We have
\[
n\ge|\mc{R}|=\sum_{\ell_Y\subset\lambda\cap Y}{\left(n(\ell_Y)+\frac{1}{2}m(\ell_Y)\right)}.
\]

Since the number of leaves in $\partial A$ which lie in $Y$ is at least $n/2$ we have that at least half of them satisfy $n(\ell_Y)+\frac{1}{2}m(\ell_Y)\le 4$. Denote by $\mc{L}$ such subset. It has $|\mc{L}|\ge n/4$. For every leaf $\ell_Y\in\mc{L}$ denote by $\kappa(\ell_Y)$ the union of the subsegments of the $\ep_2$-rectangles in $\mc{R}$ with a horizontal side in $\ell_Y$. By Lemma \ref{lem:ep3thick}, each of them has length at most $T=(16\pi^2T(\pi/6)|\chi(S)|^2)/({\rm vol}(\ep_3/16)^2)$, so the total length is at most
\[
\ell(\kappa(\ell_Y))\le (n(\ell_Y)+m(\ell_Y))T\le \frac{128T(\pi/6)\pi^2|\chi(S)|^2}{{\rm vol}(\ep_3/16)^2}.
\]

When $\alpha$ is not parabolic we select an additional collection of subsegments. For each leaf $\ell_Y\in\mc{L}$ we consider also the connected subsegment $\kappa_\alpha(\ell_Y)$ consisting of those points at distance at most $\ep_2/2$ from $\alpha^*\subset\partial A$. Each $\kappa_\alpha(\ell_Y)$ cobounds a $\ep_2/2$-rectangle with a subsegment $\kappa_\alpha'(\ell_Y)\subset\alpha^*$ whose vertical sides have length $\le\ep_2/2$. Note that if $\ell_Y,\ell_Y'$ are distinct leaves then $\kappa_\alpha'(\ell_Y),\kappa_\alpha'(\ell_Y')$ are disjoint. Thus, we have
\[
\sum_{\ell_Y\in\mc{L}}{{\rm Length}(\kappa_\alpha(\ell_Y))}=\sum_{\ell_Y\in\mc{L}}{{\rm Length}(\kappa_\alpha'(\ell_Y))}\le\ell(\alpha^*)=L.
\]

The complement $\tau_Y-(\kappa(\ell_Y)\cup\kappa_\alpha(\ell_Y))$ has distance at least $\ep_2$ from $\partial A-\ell_Y$. Hence, it has an embedded normal neighborhood $N(\ell_Y)$ of radius $\ep_2/2$. Note that if $\ell_Y,\ell_Y'$ are distinct leaves in $\partial A$ then their corresponding subsets $N(\ell_Y),N(\ell_Y')$ are disjoint otherwise we would have a point in $\tau_Y-(\kappa(\ell_Y)\cup\kappa_\alpha(\ell_Y))$ and a point in $\tau_Y'-(\kappa(\ell_Y')\cup\kappa_\alpha(\ell_Y'))$ at distance at most $\ep_2$. Therefore
\begin{align*}
n\pi={\rm Area}(A) &\ge{\rm Area}\left(\bigcup_{\ell_Y\in\mc{L}}{N(\ell_Y)}\right)\\
&=\sum_{\ell_Y\in\mc{L}}{{\rm Area}(N(\ell_Y))}\\
&=\sum_{\ell_Y\in\mc{L}}{{\rm Length}(\tau_Y-(\kappa(\ell_Y)\cup\kappa'(\ell_Y)))\sinh(\ep_2/2)}
\end{align*}
Since the number of summands is at least $n/4$ we conclude that at least half of them have length bounded by 
\[
{\rm Length}(\tau_Y-(\kappa(\ell_Y)\cup\kappa'(\ell_Y)))\le\frac{8\pi}{\sinh(\ep_2/2)}.
\]
Hence, for those segments we have
\begin{align*}
\ell(\tau_Y) &={\rm Length}(\tau_Y-(\kappa(\ell_Y)\cup\kappa'(\ell_Y)))+{\rm Length}(\kappa(\ell_Y))+{\rm Length}(\kappa'(\ell_Y))\\
 &\le\frac{8\pi}{\sinh(\ep_2/2)}+\frac{128T(\pi/6)\pi^2|\chi(S)|^2}{{\rm vol}(\ep_3/16)^2}+L. 
\end{align*}

By definition each endpoint of $\tau_Y$ lies on the boundary of the $\ep_0$-cuspidal part of $S-\gamma_Y$ and hence has a loop around it representing the core of the cusp of length $\le 2\ep_0$. By definition of subsurface projection, there exists a representative of $\pi_Y(\alpha)$ which is a concatenation of one or two copies of $\tau_Y$ with the two loops at its endpoints. In particular such concatenation has length at most
\[
4\ep_0+\frac{16\pi}{\sinh(\ep_3/16)}+\frac{256T(\pi/6)\pi^2|\chi(S)|^2}{{\rm vol}(\ep_3/16)^2}+2L
\]
as desired. This finishes the proof of Theorem \ref{thm:effeff}.

\section{Effective convex core width}
\label{sec:effcusp}

In this section we discuss the second ingredient of the proof of Theorem \ref{thm:main1}. Let $Q$ be a hyperbolic manifold diffeomorphic to ${\rm int}(Y)\times\mb{R}$ such that each component $\theta\subset\partial Y$ is parabolic. Recall that $Q$ has a convex core ${\rm CC}(Q)\subset Q$ which, in case it has finite volume, is a codimension 0 submanifold isotopic to ${\rm int}(Y)\times[-1,1]-(P^+\times\{1\}\cup P^-\times\{-1\})$ (where $P^\pm\subset{\rm int}(Y)\times\{\pm1\}$ are multicurves representing the accidental parabolics). The main result of the section is the following effective width estimate. 

\begin{thm}
\label{thm:effcusp}
Let $\ep\le\ep_0$ be a Margulis constant for dimensions 2 and 3. Let $Y$ be a compact orientable surface with $\chi(Y)<0$. Consider a hyperbolic manifold $Q$ diffeomorphic to ${\rm int}(Y)\times\mb{R}$ such that each component $\theta\subset\partial Y$ is parabolic and with a finite volume convex core ${\rm CC}(Q)$. Let $\kappa\subset{\rm CC}(Q)-\bigcup_{\theta\subset\partial Y}\partial\mb{C}_{\ep}(\theta)$ be an arc joining two boundary components of ${\rm CC}(Q)$ on opposite sides of ${\rm CC}(Q)$. Suppose that there are non-peripheral simple closed curves $\alpha,\beta\subset Y$ whose geodesic representatives in $Q$ have length at most $L\ge2\log(256\pi^2|\chi(Y)|^4/\ep^2)$. Then
\[
d_{\mc{C}(Y)}(\alpha,\beta)\le (2+2\log_2(Le^{L/2}))\frac{\sinh(2\ep+2L)-2(\ep+L)}{\sinh{2\ep}-2\ep}\ell(\kappa).
\]
\end{thm}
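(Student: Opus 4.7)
The plan is to build a chain of simple closed curves $\alpha = \gamma_0, \gamma_1, \dots, \gamma_N = \beta$ in $Y$ with (i) consecutive curves having controlled curve-graph distance via an intersection estimate, and (ii) the number of steps $N$ bounded by $\ell(\kappa)$ times the volume ratio appearing in the claim. Summing $d_{\mc{C}(Y)}(\alpha,\beta) \le \sum_i d_{\mc{C}(Y)}(\gamma_i,\gamma_{i+1})$ then yields the theorem.

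For (i), I would arrange each consecutive pair $\gamma_i,\gamma_{i+1}$ to be simultaneously realized as closed geodesics of length at most $L$ on some complete finite-area hyperbolic surface $(Y,\sigma)$. Lemma \ref{lem:length intersection} then gives $i(\gamma_i,\gamma_{i+1}) \le L e^{L/2}$, and Lemma \ref{lem:intersec dist} yields
\[
d_{\mc{C}(Y)}(\gamma_i,\gamma_{i+1}) \le 2 + 2\log_2(L e^{L/2}),
\]
exactly matching the first factor of the claim.

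For (ii), note that since $\alpha^*$ and $\beta^*$ lie in the convex core, any pleated surfaces realizing them must cross $\kappa$: the arc joins the two sides of $\mathrm{CC}(Q)$ while a pleated surface homotopic to a level separates $\mathrm{CC}(Q)$. We anchor the chain at intersection points $p_0,p_N \in \kappa$ (so that the endpoint curves $\gamma_0,\gamma_N$ may be taken to be $\alpha,\beta$) and place along the subarc a maximal $2\epsilon$-separated sequence $p_0,\dots,p_N$, so that the embedded balls $B(p_i,\epsilon)\subset Q$ are pairwise disjoint. At each $p_i$ we employ a pleated surface $f_i:(Y,\sigma_i)\to Q$ with $p_i \in f_i(Y)$ and, placing $x_i = f_i^{-1}(p_i)$ in the $\epsilon$-thick part of $(Y,\sigma_i)$, apply Lemma \ref{lem:short gen}(2) to obtain a simple closed curve $\gamma_i$ of length at most $2\log(256\pi^2|\chi(Y)|^4/\epsilon^2)\le L$ on $(Y,\sigma_i)$, hence of length $\le L$ in $Q$. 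The hypothesis $L \ge 2\log(256\pi^2|\chi(Y)|^4/\epsilon^2)$ is exactly what makes this curve fit within the allowed length budget. The count $N$ is then controlled by a packing argument in $\mathbb{H}^3$: each disjoint ball $B(p_i,\epsilon)$ has volume $\pi(\sinh(2\epsilon)-2\epsilon)$, while the union fits inside a tubular region of $\kappa$ whose volume is at most $\ell(\kappa)\cdot\pi(\sinh(2\epsilon+2L)-2(\epsilon+L))$ once one accounts for the fact that each associated geodesic $\gamma_i^*$ of length $\le L$ reaches distance at most $L$ from $p_i$; dividing yields precisely the second factor.

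The main obstacle I anticipate is verifying that consecutive $\gamma_i,\gamma_{i+1}$ can genuinely be realized on a single hyperbolic surface with both lengths $\le L$: the pleated surfaces $f_i,f_{i+1}$ pass through nearby points of $\kappa$ but are a priori distinct, so one must either interpolate between them continuously or else realize both short loops simultaneously on one common pleated surface through a nearby point. Closely related, the precise packing bookkeeping giving exactly the claimed ratio requires attention to the regions where $\kappa$ enters a Margulis tube, where embedded $\epsilon$-balls must be replaced by the appropriate tube geometry.
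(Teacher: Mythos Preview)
Your overall architecture is right and matches the paper's: build a chain of $L$-short curves along $\kappa$, bound consecutive curve-graph distance by $2+2\log_2(Le^{L/2})$ via Lemmas~\ref{lem:intersec dist} and~\ref{lem:length intersection}, and bound the chain length by $\ell(\kappa)$ times the volume ratio. But the two difficulties you flag are genuine, and your resolutions for them do not work as stated.

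\textbf{The interpolation problem.} You are correct that placing discrete pleated surfaces $f_i$ at $2\ep$-separated points $p_i$ gives no mechanism for $\gamma_i$ and $\gamma_{i+1}$ to live on a common hyperbolic structure, so the intersection bound cannot be applied. The paper resolves this by using a \emph{continuous} $1$-Lipschitz sweepout $\{f_t:({\rm int}(Y),\sigma_t)\to Q\}_{t\in[0,1]}$ (from the Canary--Thurston Filling Theorem) with $f_0(\alpha)=\alpha^*$ and $f_1(\beta)=\beta^*$. One then sets $I(\gamma)=\{t : \ell_{\sigma_t}(\gamma_t)<L\text{ and }f_t(\gamma_t)\text{ meets }\kappa\}$; these are open and cover $[0,1]$ by your Lemma~\ref{lem:short gen} argument. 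Whenever $I(\gamma)\cap I(\gamma')\neq\emptyset$, both curves have length $<L$ on the \emph{same} metric $\sigma_t$, and the intersection bound applies. Connectedness of $[0,1]$ then gives a connected graph $G_\kappa$ on these curves containing both $\alpha$ and $\beta$. This is the missing idea.

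\textbf{The counting argument.} Your packing of $\ep$-balls centered on $\kappa$ bounds the number of \emph{points} $p_i$, not the number of homotopy classes of short curves, and the bound it gives is $\ell(\kappa)/(2\ep)$ rather than the stated ratio (note also that $\pi(\sinh(2\ep+2L)-2(\ep+L))$ is the volume of a \emph{ball} of radius $\ep+L$, not of a tube around $\kappa$). The correct count, following \cite[Lemma 3.6]{APT}, is: cut $\kappa$ into unit-length pieces $\kappa_j$; the number of homotopy classes of loops of length $\le L$ based anywhere on $\kappa_j$ is at most ${\rm vol}(B_{\mb{H}^3}(\ep+L))/{\rm vol}(B_{\mb{H}^3}(\ep))$, since lifting to $\mb{H}^3$ each such class deposits a distinct orbit point in the $(\ep+L)$-ball about a lift of the basepoint, and the $\ep$-balls about these orbit points are disjoint. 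Summing over the $\lceil\ell(\kappa)\rceil$ pieces bounds the number of vertices of $G_\kappa$, hence the diameter of any path in it, by exactly the second factor in the claim.
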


Theorem \ref{thm:effcusp} is a mild variation of \cite[Lemma 4.4 and Theorem 4.1]{APT} by Aougab, Patel, and Taylor, which, in turn, makes effective a theorem of Brock and Bromberg \cite[Theorem 7.16]{BB11}.

\begin{proof}[Proof of Theorem \ref{thm:effcusp}]
We proceed exactly as in \cite[Lemma 4.4]{APT}. We only need an adjustment to our setup. The first step is to use the Canary-Thurston's Filling Theorem \cite[Filling Theorem]{C96} to find a so-called {\em 1-Lipschitz sweepout} $\{f_t:({\rm int}(Y),\sigma_t)\to Q\}_{t\in[0,1]}$ connecting $\alpha^*$ to $\beta^*$ that is a proper homotopy $f:{\rm int}(Y)\times[0,1]\to Q$ together with a continuous family of complete finite area hyperbolic metrics $({\rm int}(Y),\sigma_t)$ such that $f_t:({\rm int}(Y),\sigma_t)\to Q$ is 1-Lipschitz and that the restrictions $f_0,f_1$ to $\alpha,\beta$ are length preserving parameterizations of $\alpha^*,\beta^*$ respectively. Consider the open (by continuity of $\sigma_t$) sets $I(\gamma)=\{t\in[0,1]\,|\,\ell_{\sigma_t}(\gamma_t)<L\}$ where $\gamma_t\subset Y$ is the shortest non-peripheral loop over all representatives of $\gamma$ on $Y$ (with respect to the metric $\sigma_t$) whose image under $f_t$ intersects $\kappa$. As $\kappa$ joins two boundary components of ${\rm CC}(Q)$ on opposite sides, every $f_t$ intersects $\kappa$. If $x\in Y$ is a point such that $f_t(x)\in\kappa$ then $x\not\in{\rm\bf cusp}(\theta,\ep)$ for every $\theta\subset\partial Y$ as $f_t(x)\not\in\mb{C}_\ep(\theta)$ by assumption on $\kappa$. Thus there are two possibilities. Either $x\in{\rm\bf collar}(\gamma,\ep)$ for some simple closed geodesic $\gamma$, in which case $t\in I(\gamma)$, or $x$ is contained in the $\ep$-thick part of $Y$, in which case, by Lemma \ref{lem:short gen}, there is a non-peripheral geodesic loop $\gamma$ based at $x$ of length $\le2\log(256\pi^2|\chi(Y)|^4/\ep^2)$ so $t\in I(\gamma)$. We deduce that the sets $I(\gamma)$ cover $[0,1]$. Additionally, by Lemmas \ref{lem:intersec dist} and \ref{lem:length intersection}, if $I(\gamma)\cap I(\gamma')\neq\emptyset$, then 
\[
d_{\mc{C}}(\gamma,\gamma')\le 2\log_2(i(\gamma,\gamma'))+2\le 2\log_2(Le^{L/2})+2.    
\]

Construct now a graph $G_\kappa$ as follows. The vertices are the curves $\gamma\in\mc{C}$ such that $I(\gamma)\neq\emptyset$ and we have an edge between $\gamma,\gamma'$ if $I(\gamma)\cap I(\gamma')\neq\emptyset$. Note that the collection $I(\gamma)$ is an open cover of $[0,1]$. Since the restrictions of $f_0,f_1$ to $\alpha,\beta$ are length preserving parameterizations of $\alpha^*,\beta^*$ and $\ell_Q(\alpha^*),\ell_Q(\beta^*)\le L$ by assumption, we have $I(\alpha),I(\beta)\neq\emptyset$ and hence $\alpha,\beta\in G_\kappa$. Let $\delta>0$ be a Lebesgue number of the covering. Let us show that the graph $G_\kappa$ is connected. Consider $\gamma_1,\gamma_2$ such that $I(\gamma_1),I(\gamma_2)\neq\emptyset$. Pick $t_j\in I(\gamma_j)$. We can assume without loss of generality that $t_1<t_2$. We now partition $[t_1,t_2]$ as $t_1=s_0<s_1<\cdots<s_{n-1}<s_n=t_2$ so that $s_j-s_{j-1}<\delta$. As $\delta$ is a Lebesgue number of the covering each interval $[s_{j-1},s_j]$ is contained in some $I(\overline{\gamma}_j)$ and $I(\overline{\gamma}_j)\cap I(\overline{\gamma}_{j+1})\neq\emptyset$ as they both contain $s_j$ so $\overline{\gamma}_j,\overline{\gamma}_{j+1}$ are connected by an edge in $G_\kappa$. This shows that the graph is connected. 

Split $\kappa$ into $n-1$ length 1 segments $\kappa_1,\cdots,\kappa_{n-1}$ and one last segment $\kappa_n$ of length at most 2. Let $\mc{S}_j$ be the set of curves of $Y$ which have a representative in $Q$ which is a loop of length at most $L$ based at a point in $\kappa_j$. By \cite[Lemma 3.6]{APT} we have 
\[
|\mc{S}_j|\le\frac{{\rm vol}(B_{\mb{H}^3}(L+\ep))}{{\rm vol}(B_{\mb{H}^3}(\ep))}=\frac{\sinh(2\ep+2L)-2(\ep+L)}{\sinh{2\ep}-2\ep}.    
\]

We can now bound the number of vertices of $G_\kappa$. By definition, each vertex $\gamma$ satisfies $I(\gamma)\neq\emptyset$ and, hence, $\gamma\in\mc{S}_j$ for some $\mc{S}_j$. Thus, the number of vertices is at most
\[
\sum_{j\le n}{|\mc{S}_j|}\le\ell(\kappa)\frac{\sinh(2\ep+2L)-2(\ep+L)}{\sinh{2\ep}-2\ep}.    
\]

As $G_\kappa$ is connected, there is a path in it joining $\alpha,\beta$ of length at most the number of vertices of $G_\kappa$. Putting together the previous estimates we get
\[
d_{\mc{C}(Y)}(\alpha,\beta)\le (2\log_2(Le^{L/2})+2)\frac{\sinh(2\ep+2L)-2(\ep+L)}{\sinh{2\ep}-2\ep}\ell(\kappa).
\]
This finishes the proof of the theorem.
\end{proof}

\section{Proof of Theorem \ref{thm:main1}}
\label{sec:proof}

We have all the ingredients to prove Theorem \ref{thm:main1}.

\subsection{Strategy}
Let us describe in more detail the strategy outlined in the introduction. Let $Y\subset S$ be a proper connected essential non-annular subsurface. Consider the drilled manifold 
\[
M:=S\times[-1,1]-V
\]
where $V$ is a tubular neighborhood of $\gamma_Y\times\{0\}$ and $\gamma_Y$ is the multicurve obtained from $\partial Y$ by collapsing parallel components to single ones. Note that $S\times[-1,1]$ is obtained from $M$ as a Dehn filling, that is, attaching back every solid torus component $V_\gamma\subset V$ (the tubular neighborhood of $\gamma\times\{0\}\subset\gamma_Y\times\{0\}$). Such a gluing is completely determined by the isotopy class on $\partial V_\gamma$ of the so-called {\em meridian} of $V_\gamma$, the unique simple closed curve $\mu_\gamma\subset\partial V_\gamma$ bounding a properly embedded disk in $V_\gamma$. 

So far, we only made topological observations, we now turn to geometry. By Theorem \ref{thm:hyperbolization2}, the end invariants $\nu^-,\nu^+$ of $Q$ determine a unique hyperbolic structure on $M$ with rank two cusps at $\partial V$ realizing those end invariants. We denote it by $M(\nu^-,\nu^+)$. We will argue that $Q$ is obtained as a long Hyperbolic Dehn Filling from $M(\nu^-,\nu^+)$ (see Theorem \ref{thm:filling}). The length estimates will follow from \cite{FPS22b} provided that we can effectively compute the lengths of the flat geodesic representatives in $\partial\mb{C}_{\ep_0}(\partial V_\gamma,M(\nu^-,\nu^+))$ of the homotopy class of the meridians $\mu_\gamma$.

In order to do so we use the technology that we developed in the previous sections (Theorem \ref{thm:effeff} and Theorem \ref{thm:effcusp}) combined with some covering arguments (as in \cite{FSVa,FSVb}). More precisely, we apply Theorem \ref{thm:effeff} (to the $\pi_1(S\times\{\pm1\})$-coverings of $M(\nu^-,\nu^+)$) to produce two moderate-length curves $\delta^-,\delta^+\subset Y$ which are close to $\pi_Y(\nu^-),\pi_Y(\nu^+)$. Then we feed these curves to Theorem \ref{thm:effcusp} applied to the $\pi_1(Y\times\{0\})$-cover of $M(\nu^-,\nu^+)$ and deduce that in such manifold the width of the non-cuspidal part of the convex core is at least $d_{\mc{C}(Y)}(\delta^-,\delta^+)$. A covering argument (Proposition \ref{pro:covering2}) allows us to deduce that the meridians $\mu_\gamma$ have all flat geodesic representatives of length at least $d_{\mc{C}(Y)}(\delta^-,\delta^+)$. As stated before, this bound together with Theorem \ref{thm:filling} and some standard techniques gives us the desired result.

\subsection{From ends to moderate-length curves}
\label{subsec:from ends to laminations}
As a first step we produce from the end invariants $\nu^-,\nu^+$ moderate-length curves in $M(\nu^-,\nu^+)$.

Recall that each $\nu^\pm$ is a combination of minimal filling laminations $\lambda\subset\mc{EL}(W)$ or finite area hyperbolic structures $(W,\sigma)$ on connected essential subsurfaces $W\subset S$ and a finite set $P\subset S$ of pairwise disjoint annuli (representing parabolic elements in $M$) so that the union of all such subsurfaces is the whole $S$. 

We can turn $\nu^\pm$ into a lamination on $S$ by replacing each hyperbolic structure $(W,\sigma)\subset\nu^\pm$ with a Bers pants decomposition ${\rm\bf short}(W,\sigma)$ (as given by Lemma \ref{lem:short gen}) and each annulus in $P$ with its core curve. With a little abuse of notation, we will still denote the resulting laminations by $\nu^\pm$. With this notation, we have the following.

\begin{dfn}[End Invariants and Subsurfaces]
\label{dfn:intersection}
We say that the end invariants $\nu^\pm$ intersect essentially a subsurface $Y\subset S$ if the corresponding laminations, obtained by replacing annuli with core curves and finite area hyperbolic structures with (some) Bers' pants decompositions, intersect $Y$.  
\end{dfn} 

\begin{remark}
\label{rmk:additive error}
We note that Bers' pants decompositions are not always unique. However, two different Bers' decompositions of a finite area hyperbolic surface $(W,\sigma_W)$, having length at most $2\pi|\chi(W)|$, can intersect at most $2\pi|\chi(W)|e^{\pi|\chi(W)|}$ (by Lemma \ref{lem:length intersection}). Thus, the subsurface projection of a Bers' pants decomposition of $(W,\sigma_W)$ is only well defined up to an error of $2+\log_2(2\pi|\chi(W)|e^{\pi|\chi(W)|})$ (by Lemma \ref{lem:intersec dist}).    
\end{remark}

\begin{lem}
\label{lem:end short}
There are $\alpha^-,\alpha^+\in\mc{C}(S)$ such that $\alpha^-\times\{-1\},\alpha^+\times\{1\}\subset M(\nu^-,\nu^+)$ are either parabolic or homotopic to closed geodesics of length at most $4\pi|\chi(S)|$ and such that their their subsurface projections to $Y$ satisfy 
\[
d_Y(\alpha^-,\nu^-),d_Y(\alpha^+,\nu^+)\le 4.
\]
\end{lem}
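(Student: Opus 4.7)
The plan is to apply Theorem~\ref{thm:end to length} to turn each end invariant into a concrete short curve on $S$; I describe the construction for $\nu^+$, the case of $\nu^-$ being completely symmetric. Decompose $\nu^+$ as the accidental parabolic multicurve $A^+$ together with, on each connected component $W$ of $S-A^+$, either a finite area hyperbolic structure $\sigma_W$ or a minimal filling lamination $\lambda_W\in\mc{EL}(W)$. Since $\nu^+$ intersects $Y$ essentially by hypothesis, at least one of these pieces does, and I will split into cases according to its type.

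If a component of $A^+$ essentially meets $Y$, I take $\alpha^+$ to be that simple closed curve: it is parabolic in $M$ by construction of the accidental parabolics. Otherwise, I pick a piece $W$ essentially meeting $Y$. In the case $W$ carries a lamination $\lambda_W$, Theorem~\ref{thm:end to length} furnishes a sequence $\alpha_n\in\mc{C}(W)$ converging to $\lambda_W$ in $\partial\mc{C}(W)$ with $\ell_M(\alpha_n)\le 2\cdot\mathrm{arccosh}(|\chi(W)|+1)\le 4\pi|\chi(S)|$; by Lemma~\ref{lem:projection of limit}, for $n$ large one has $\pi_Y(\lambda_W)\subset\pi_Y(\alpha_n)$, so $\alpha_n$ intersects $Y$ essentially and I set $\alpha^+:=\alpha_n$. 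In the case $W$ carries a hyperbolic structure $\sigma_W$, I apply Lemma~\ref{lem:short gen}(3) to produce a Bers pants decomposition $\Gamma$ of $(W,\sigma_W)$ with every pants curve of length at most $2\pi|\chi(W)|$ on $\sigma_W$, hence of length at most $4\pi|\chi(S)|$ in $M$ by the first bullet of Theorem~\ref{thm:end to length}, and I select as $\alpha^+$ a pants curve of $\Gamma$ that essentially intersects $Y$.

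For the projection bound, the key observation is that, in every case, the chosen $\alpha^+$ lies inside a single piece of the $\nu^+$-decomposition of $S$ and is therefore disjoint, up to isotopy, from every other piece. Consequently $\alpha^+$ together with the ``expanded'' invariant (Bers pants decompositions replacing each hyperbolic piece, core curves replacing the parabolic annuli, laminations left as they are) forms a disjoint family of curves and minimal laminations on $S$, and the subsurface projection of such a disjoint family to $\mc{C}(Y)$ has diameter at most $4$ by the standard arc-intersection bound recalled in the definition of $\pi_Y$. This yields the required inequality $d_Y(\alpha^+,\nu^+)\le 4$.

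The main obstacle is the Bers pants subcase: one has to verify that at least one pants curve $\gamma\in\Gamma$ actually intersects $Y$ essentially. The point is that $\Gamma\cup\partial W$ fills $W$, so if no pants curve essentially met $Y$, then $W\cap Y$ would be isotopic into the disjoint union of the pairs of pants cut out by $\Gamma$; combined with the already-discharged assumption that no component of $A^+$ essentially meets $Y$, this would contradict the essentiality of $W\cap Y$. A little extra care may be needed in the degenerate situation where a component of $\partial Y$ is isotopic to a pants curve, but in that case that boundary curve itself already witnesses the intersection after a harmless adjustment of $\Gamma$.
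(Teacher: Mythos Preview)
Your proof is correct and essentially identical to the paper's; the only difference is that the paper, immediately before the lemma, replaces each hyperbolic piece $(W,\sigma_W)$ by a fixed Bers pants decomposition ${\rm\bf short}(W,\sigma_W)$ and each parabolic annulus by its core, and then \emph{defines} ``$\nu^\pm$ intersects $Y$'' via the resulting lamination, so that your Bers-case ``obstacle'' is absorbed into the hypothesis. Your direct argument for that case also works once you invoke the standing hypothesis that no component of $\partial Y$ is parabolic (this forces $A^+$ disjoint from $Y$, hence $Y\subsetneq W$, and then any pants decomposition of $W$ has a non-peripheral curve inside $Y$ since $Y$ is not a three-holed sphere); your edge-case remedy is slightly off, though, since a pants curve isotopic to $\partial Y$ is peripheral and does not project, and in the lamination case your ``disjoint family'' sentence should rely on $\pi_Y(\lambda_W)\subset\pi_Y(\alpha_n)$ together with disjointness of $\alpha_n$ from the \emph{other} pieces, rather than on disjointness of $\alpha_n$ from $\lambda_W$.
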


\begin{proof}
By assumption, both end invariants $\nu^\pm$ intersect essentially $Y$ and none of the components of $\partial Y$ is parabolic, that is, none of the annuli in $\nu^\pm$ has core curve isotopic to $\partial Y$. Let $\lambda^\pm$ be a component of $\nu^\pm$ intersecting essentially $Y$. Let $W^\pm\subset S$ be the component of definition of $\lambda^\pm$.

If $\lambda^\pm\in\mc{EL}(W^\pm)$ then, by Theorem \ref{thm:end to length}, there exists a sequence of simple closed curves $\alpha^\pm_n\in\mc{C}(W^\pm)$ converging to $\lambda^\pm$ in the boundary of $\mc{C}(W^\pm)$ such that each $\alpha^\pm_n\times\{\pm1\}$ is homotopic to a closed geodesic of length at most $2\cdot{\rm arccosh}(|\chi(W^\pm)|+1)<4\pi|\chi(S)|$. By Lemma \ref{lem:projection of limit}, for every $n$ large enough we have $\pi_Y(\nu^\pm)\subset\pi_Y(\alpha_n)$. 

If $\lambda^\pm\subset{\rm\bf short}(W^\pm,\sigma_{W^\pm})$ then, by Theorem \ref{thm:end to length}, $\lambda^\pm\times\{\pm1\}$ is homotopic to a closed geodesic whose length is bounded by $2\ell_\sigma(\lambda^\pm)\le 4\pi|\chi(W^\pm)|$ (by Lemma \ref{lem:short gen}). In this case $\pi_Y(\lambda^\pm)\subset\pi_Y(\nu^\pm)$.

If $\lambda^\pm$ is a core curve of an annulus in $P$, then $\lambda\times\{\pm1\}$ is parabolic and, hence, it has length 0 according to the convention adopted in Theorem \ref{thm:effcusp}. In this case $\pi_Y(\lambda^\pm)\subset\pi_Y(\nu^\pm)$.

In order to conclude it is enough to recall that $\pi_Y(\bullet)$ is a subset of diameter at most 4 of $\mc{C}(W^\pm)$.
\end{proof}

\subsection{The effective efficiency step}
Next we use Theorem \ref{thm:effeff} and pass from the moderate-length curves produced by the previous section to moderate-length curves on $Y$ representing the subsurface projections of $\nu^-,\nu^+$.

\begin{lem}
\label{lem:covering1}
There are representatives $\beta^-,\beta^+\subset Y$ of the subsurface projections $\pi_Y(\alpha^-),\pi_Y(\alpha^+)$ whose geodesic representatives in $M(\nu^-,\nu^+)$ have length at most 
\[
c_1|\chi(S)|^{98}
\]
for $c_1=2^{385}/\ep_0^{60}$.
\end{lem}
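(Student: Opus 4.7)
The plan is to apply Theorem \ref{thm:effeff}, which however requires an ambient hyperbolic manifold diffeomorphic to $S\times(-1,1)$ with rank-one cusps at $\gamma_Y$, whereas $M(\nu^-,\nu^+)$ has rank-two cusps at $\partial V$. I would bridge this gap by the covering trick of \cite{FSVa,FSVb}: work in an appropriate cover of $M(\nu^-,\nu^+)$ where each rank-two cusp unfolds into a rank-one cusp, apply the effective efficiency theorem there, and push the resulting curves back down.

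First I would take the two covers $p^\pm:\tilde{M}^\pm\to M(\nu^-,\nu^+)$ associated to the $\pi_1$-injective subgroups $\pi_1(S\times\{\pm 1/2\})<\pi_1(M)$. Each such subgroup is a copy of $\pi_1(S)$, so tameness gives that $\tilde{M}^\pm$ is diffeomorphic to $S\times\mb{R}$. Each rank-two cusp of $M$ at a torus component $\partial V_\gamma$ (with $\gamma\subset\gamma_Y$) has fundamental group $\langle\gamma,\mu_\gamma\rangle$, where the longitude $\gamma$ lies in $\pi_1(S)$ while the meridian $\mu_\gamma$ does not; intersecting with $\pi_1(S)$ leaves $\langle\gamma\rangle$, producing a rank-one cusp of $\tilde{M}^\pm$ at $\gamma\times\{0\}$. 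Running through every component of $\gamma_Y$ places $\tilde{M}^\pm$ in the setup of Theorem \ref{thm:effeff}.

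Next I would apply Theorem \ref{thm:effeff} to $\tilde{M}^\pm$ with input curve $\alpha^\pm$. Since $\alpha^\pm$ lies in the subgroup defining the cover, it lifts isometrically, and its geodesic representative in $\tilde{M}^\pm$ has length $L^\pm\le 4\pi|\chi(S)|$ by Lemma \ref{lem:end short} (with $L^\pm=0$ in the parabolic case; the degenerate case $\alpha^\pm\cap\gamma_Y=\emptyset$ can be handled directly by setting $\beta^\pm=\alpha^\pm$). Theorem \ref{thm:effeff} then produces a simple closed curve $\tilde{\beta}^\pm\subset Y$ representing $\pi_Y(\alpha^\pm)$ whose geodesic length in $\tilde{M}^\pm$ is bounded, by Remark \ref{rmk:length coeff}, by
\[
2L^\pm+\frac{2^{384}}{\ep_0^{60}}|\chi(S)|^{98}\le\frac{2^{385}}{\ep_0^{60}}|\chi(S)|^{98}.
\]
Setting $\beta^\pm:=p^\pm(\tilde{\beta}^\pm)$ and using that the image of a closed geodesic under the Riemannian covering $p^\pm$ is a closed geodesic of the same length freely homotopic to $\beta^\pm$, the geodesic representative of $\beta^\pm$ in $M(\nu^-,\nu^+)$ inherits the same length bound, yielding the statement with $c_1=2^{385}/\ep_0^{60}$.

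The only delicate step is the cusp identification in the second paragraph: verifying that the rank-two cusps of $M$ at $\partial V$ really produce rank-one cusps of $\tilde{M}^\pm$ sitting exactly at the multicurve $\gamma_Y$ under a suitable identification $\tilde{M}^\pm\cong S\times\mb{R}$. This is a standard covering-space computation. Everything else is a direct combination of Theorem \ref{thm:effeff}, Remark \ref{rmk:length coeff}, and the monotonicity of geodesic length under Riemannian coverings.
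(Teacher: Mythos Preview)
Your proposal is correct and follows essentially the same route as the paper: pass to the $\pi_1(S\times\{\pm 1/2\})$-covers, invoke Bonahon's tameness to identify them with $S\times\mb{R}$ with rank-one cusps at $\gamma_Y$, apply Theorem \ref{thm:effeff} together with Remark \ref{rmk:length coeff} to the curves $\alpha^\pm$ of Lemma \ref{lem:end short}, and push the resulting $\beta^\pm$ back down. Your extra care with the cusp identification and with the degenerate case $\alpha^\pm\cap\gamma_Y=\emptyset$ is welcome but not a departure from the paper's argument.
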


\begin{proof}
Consider the coverings $Q^\pm\to M(\nu^-,\nu^+)$ corresponding to the subgroups $\pi_1(S\times\{\pm1\})$ of $\pi_1(M)$. By Bonahon's Tameness \cite{Bo86}, they are hyperbolic manifolds diffeomorphic to $S\times\mb{R}$ with rank one cusps at $\gamma_Y$. We apply Theorem \ref{thm:effeff} and Remark \ref{rmk:length coeff} with input given by the curves $\alpha^\pm$ provided by Lemma \ref{lem:end short} and obtain as output $\beta^\pm\subset\pi_Y(\alpha^\pm)$ of length
\[
\ell(\beta^\pm)\le 2\ell(\alpha^\pm)+\frac{2^{384}}{\ep_0^{60}}|\chi(S)|^{98}\le 8\pi|\chi(S)|+\frac{2^{384}}{\ep_0^{60}}|\chi(S)|^{98}<\frac{2^{385}}{\ep_0^{60}}|\chi(S)|^{98}.
\]
\end{proof}

We can refine $\beta^\pm$ to much shorter curves $\delta^\pm$ up to paying a small cost in terms of curve graph distances. 

\begin{lem}
\label{lem:shorter}
There are $\delta^-,\delta^+\in\mc{C}(Y)$ whose geodesic representatives in $M(\nu^-,\nu^+)$ have length at most $2\cdot{\rm arccosh}(|\chi(Y)|+1)$ and such that
\[
d_{\mc{C}(Y)}(\delta^\pm,\beta^\pm)\le c_2\log|\chi(S)|. 
\]
for some universal constant $c_2>0$.    
\end{lem}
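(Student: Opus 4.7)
The plan is to realize $\beta^\pm$ by a pleated surface in $M(\nu^-,\nu^+)$, extract the systole of the resulting hyperbolic surface on $Y$, and then convert the length bounds into a curve-graph-distance bound via the intersection estimates of Section \ref{sec:3}.

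First I would construct, for each sign, a pleated surface $f^\pm\colon (Y,\sigma^\pm)\to M(\nu^-,\nu^+)$ realizing $\beta^\pm$ geodesically. Since every component of $\partial Y$ is parabolic in $M(\nu^-,\nu^+)$, the inclusion ${\rm int}(Y)\hookrightarrow S\times\{0\}\subset M$ is a proper $\pi_1$-injective map sending the ends of $Y$ into the rank-one cusps of $M$ corresponding to $\gamma_Y\times\{0\}$. Pick an ideal triangulation $\lambda^\pm$ of ${\rm int}(Y)$ containing $\beta^\pm$ as a closed leaf, obtained by spiraling finitely many ideal leaves around $\beta^\pm$ exactly as in Section \ref{sec:effeff}. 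Proposition \ref{pro:realization} then produces a pleated surface properly homotopic to the inclusion that maps $\lambda^\pm$ geodesically. In particular $\ell_{\sigma^\pm}(\beta^\pm)=\ell_M(\beta^\pm)\le c_1|\chi(S)|^{98}$ by Lemma \ref{lem:covering1}.

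Next I would take $\delta^\pm$ to be the systole of $(Y,\sigma^\pm)$. This is automatically a simple closed geodesic: on any complete hyperbolic surface of negative Euler characteristic the shortest closed geodesic is simple, and it is automatically non-peripheral since peripheral classes retract into cusps. The Gauss--Bonnet argument in the proof of Lemma \ref{lem:short gen}(1) bounds its length by $2\,{\rm arccosh}(|\chi(Y)|+1)$. Because $\delta^\pm$ is non-peripheral in $Y$ it is not isotopic in $S$ to any component of $\gamma_Y$, hence not parabolic in $M$; as $f^\pm$ is $1$-Lipschitz, its geodesic representative in $M(\nu^-,\nu^+)$ has length at most $2\,{\rm arccosh}(|\chi(Y)|+1)$.

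Finally I would bound the curve-graph distance by combining Lemmas \ref{lem:length intersection} and \ref{lem:intersec dist}. Using $e^{{\rm arccosh}(x)}\le 2x$ one gets
\[
i(\beta^\pm,\delta^\pm)\le \ell_{\sigma^\pm}(\beta^\pm)\, e^{\ell_{\sigma^\pm}(\delta^\pm)/2}\le c_1|\chi(S)|^{98}\cdot 2(|\chi(Y)|+1)\le 4c_1|\chi(S)|^{99},
\]
and hence
\[
d_{\mc{C}(Y)}(\beta^\pm,\delta^\pm)\le 2+2\log_2\bigl(4c_1|\chi(S)|^{99}\bigr)\le c_2\log|\chi(S)|
\]
for a universal constant $c_2$ absorbing $\log_2(4c_1)$ and the factor $198$. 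The delicate step is the first one, constructing a pleated surface from $Y$ (with $\partial Y$ sent to cusps) that realizes $\beta^\pm$ as a closed geodesic; once that is in place, the whole argument is structural: letting $\delta^\pm$ be logarithmically short keeps the exponential factor $e^{\ell(\delta^\pm)/2}$ polynomial in $|\chi(Y)|$, so the long curve $\beta^\pm$ contributes only linearly and its polynomial length in $|\chi(S)|$ converts into a logarithmic curve-graph distance, exactly as needed.
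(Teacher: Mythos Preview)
Your proof is correct and follows essentially the same approach as the paper's: realize $\beta^\pm$ geodesically by a pleated surface with domain ${\rm int}(Y)$, take a short non-peripheral curve $\delta^\pm$ on the resulting hyperbolic structure, and convert lengths to curve-graph distance via Lemmas~\ref{lem:length intersection} and~\ref{lem:intersec dist}. The only cosmetic differences are that the paper maps the pleated surface into the cover $Q_Y$ corresponding to $\pi_1({\rm int}(Y)\times\{0\})$ rather than directly into $M(\nu^-,\nu^+)$ (immaterial, since the covering preserves lengths of closed geodesics), and that the paper invokes Lemma~\ref{lem:short gen} for $\delta^\pm$ rather than naming it as the systole; your phrasing is arguably cleaner since closed geodesics on a cusped finite-area surface are automatically non-peripheral. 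Two small wording issues: an ideal triangulation has no closed leaves, so what you want is a maximal lamination containing $\beta^\pm$ (with ideal leaves spiraling onto it); and $S\times\{0\}\not\subset M$ since $\gamma_Y\times\{0\}$ is drilled out, so write ${\rm int}(Y)\times\{0\}\subset M$ instead.
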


\begin{remark}
\label{rmk:shorter}
Numerically, we can choose $c_2=570\log_2(c_1)$.
\end{remark}

\begin{proof}
By Lemma \ref{lem:covering1}, there are curves $\beta^-,\beta^+\subset Y$ representing the subsurface projections $\pi_Y(\alpha^-),\pi_Y(\alpha^+)$ whose geodesic representatives in $Q_Y$ have length at most $c_1|\chi(S)|^{98}$. Let $f^\pm:({\rm int}(Y),\sigma^\pm)\to Q_Y$ be a pleated surface homotopic to the inclusion of ${\rm int}(Y)$ mapping $\beta^\pm$ geodesically. Consider a non-peripheral $\delta^\pm$ of length at most $2\cdot{\rm arccosh}(|\chi(Y)|+1)$ for $({\rm int}(Y),\sigma^\pm)$ as provided by Lemma \ref{lem:short gen}. As $f^\pm$ is 1-Lipschitz, we also have $\ell_Q(\delta^\pm)\le 2\cdot{\rm arccosh}(|\chi(Y)|+1)$. The distance in $\mc{C}(Y)$ between $\delta^\pm,\beta^\pm$ can be bounded using Lemmas \ref{lem:intersec dist} and \ref{lem:length intersection}.
\begin{align*}
d_Y(\delta^\pm,\beta^\pm) &\le 2+2\log_2(i(\delta^\pm,\beta^\pm)) &\\
 &\le 2+2\log_2(\ell(\beta^\pm)e^{\ell(\delta^\pm)/2}) &\\
 &\le 2+2\log_2(c_1|\chi(S)|^{98})+\log_2(e)\cdot{\rm arccosh}(|\chi(Y)|+1) &\\
 &\le 2+2\log_2(c_1)\log_2(e)\log(|\chi(S)|)^{98})+\log_2(e)\log(4|\chi(S)|).
 \end{align*}  
The conclusion follows with a little bit of calculus.
\end{proof}

\subsection{The effective cusp size step}
Lastly, we use Theorem \ref{thm:effcusp} combined with a covering argument to bound from below the length of the flat geodesic representative of the meridian $\mu_\gamma$ on $\partial\mb{C}_{\ep_0}(\partial V_\gamma)$. 

\begin{pro}
\label{pro:covering2}
Let $\ep_Y:=\ep_0^{10}/2^{37}\pi^8|\chi(Y)|^{16}$. For every component $\gamma\subset\gamma_Y$ and every slope $\mu\subset\partial\mb{C}_{\ep_Y}(\partial V_\gamma)$ not conjugate into $\pi_1(Y)$, the flat geodesic representative $\overline{\mu}$ of $\mu$ on $\partial\mc{C}_{\ep_Y}(\partial V_\gamma)$ has length at least 
\[
c_3\frac{d_{\mc{C}(Y)}(\delta^-,\delta^+)}{|\chi(Y)|^{264}}-36\log(|\chi(Y)|)-c_4
\]
for some universal constants $c_3,c_4>0$.
\end{pro}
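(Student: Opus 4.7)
The plan is to apply Theorem~\ref{thm:effcusp} in the cover $Q_Y\to M(\nu^-,\nu^+)$ corresponding to $\pi_1(Y\times\{0\})\subset\pi_1(M)$, and then convert the resulting convex-core width lower bound into the desired lower bound on $\ell(\mu)$ via a covering argument.

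I begin by verifying the hypotheses of Theorem~\ref{thm:effcusp} for $Q_Y$. By Bonahon's tameness, $Q_Y$ is diffeomorphic to $\mathrm{int}(Y)\times\mb{R}$; each component of $\partial Y$ is parabolic in $Q_Y$ because every component of $\gamma_Y$ is parabolic in $M$; and because $Y$ is a proper subsurface of $S$ and the restrictions of $\nu^\pm$ to $Y$ are not ending laminations on $Y$, the cover $Q_Y$ is doubly geometrically finite with finite-volume convex core. The curves $\delta^\pm\in\mc{C}(Y)$ from Lemma~\ref{lem:shorter} lift to $Q_Y$ as closed geodesics of length at most $L_0:=2\,\mathrm{arccosh}(|\chi(Y)|+1)$. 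Applying Theorem~\ref{thm:effcusp} with $\ep=\ep_Y$ and $L=\max\{L_0,\,2\log(256\pi^2|\chi(Y)|^4/\ep_Y^2)\}$ gives
\[
\ell(\kappa)\;\ge\;\frac{d_{\mc{C}(Y)}(\delta^-,\delta^+)}{K}
\]
for every arc $\kappa\subset\mathrm{CC}(Q_Y)\setminus\bigcup_\theta\partial\mb{C}_{\ep_Y}(\theta,Q_Y)$ joining opposite sides of the convex core, where $K$ is the explicit constant of Theorem~\ref{thm:effcusp}. Substituting $\ep_Y=\ep_0^{10}/(2^{37}\pi^8|\chi(Y)|^{16})$ and the chosen $L$, expanding the $\sinh$-ratio and estimating the $\log_2(Le^{L/2})$ factor produces $K\lesssim|\chi(Y)|^{264}$, together with an additive error of the form $36\log|\chi(Y)|+O(1)$ that will appear in the final bound.

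Next, I use a covering argument to produce a suitable $\kappa$ from the slope $\mu$. Fix a component $\gamma\subset\gamma_Y$ and let $\theta\subset\partial Y$ be a boundary component parallel to $\gamma$, so that the rank-two cusp $\mb{C}_{\ep_Y}(\partial V_\gamma,M)$ is covered in $Q_Y$ by rank-one cusps, one of which is $\mb{C}_{\ep_Y}(\theta,Q_Y)$. The annular boundary $\partial\mb{C}_{\ep_Y}(\theta,Q_Y)$ is the flat $\mb{Z}$-cover of the flat torus $\partial\mb{C}_{\ep_Y}(\partial V_\gamma,M)$ corresponding to $\langle\gamma\rangle\subset\pi_1(T^2)$, and its two ends escape to the two topological ends of $Q_Y$. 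Since $\mu$ is not conjugate into $\pi_1(Y)\cap\pi_1(T^2)=\langle\gamma\rangle$, a lift $\tilde\mu$ of the Euclidean geodesic representative of $\mu$ on the torus is a bi-infinite Euclidean line on this annulus whose two ends go to the two ends of $Q_Y$. Truncating $\tilde\mu$ so that its endpoints lie on the two components of $\partial\mathrm{CC}(Q_Y)\cap\partial\mb{C}_{\ep_Y}(\theta,Q_Y)$ and perturbing the result slightly into $\mathrm{CC}(Q_Y)\setminus\partial\mb{C}_{\ep_Y}(\theta,Q_Y)$ yields an admissible arc $\kappa$.

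Finally, I bound $\ell(\kappa)$ from above in terms of $\ell(\mu)$ by a flat-geometry calculation on the annulus. Writing one period of $\mu$ as $p\mu_\gamma+q\gamma$ with $p\ne 0$, each period of $\tilde\mu$ realizes a nonzero transverse displacement $h$ on the annulus, so $\tilde\mu$ spans the vertical extent $W$ of $\mathrm{CC}(Q_Y)\cap\partial\mb{C}_{\ep_Y}(\theta,Q_Y)$ in $\lceil W/h\rceil$ periods, producing $\ell(\kappa)\le\lceil W/h\rceil\,\ell(\mu)+O(1)$, with the $O(1)$ tracking the cost of the perturbation off the cusp boundary. The principal obstacle is controlling the ratio $W/h$ uniformly: $h$ can a priori be small when $\mu$ is nearly parallel to $\gamma$, while $W$ is controlled by the convex-core lower bound just established. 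Following the strategy of \cite{FSVa,FSVb}, one replaces $\mu$ by a representative in its conjugacy class minimizing the transverse component and bounds $W/h$ in terms of universal flat-geometry constants and a polynomial in $|\chi(Y)|$. Combining this estimate with the lower bound $\ell(\kappa)\ge d_{\mc{C}(Y)}(\delta^-,\delta^+)/K$ and rearranging for $\ell(\mu)$ yields the stated inequality, with $c_3$ and $c_4$ arising from the explicit bookkeeping in these two steps.
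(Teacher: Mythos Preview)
Your overall setup (passing to the cover $Q_Y$, invoking Theorem~\ref{thm:effcusp} to get a width lower bound) matches the paper, and the first paragraph is essentially correct. The genuine gap is in your covering argument converting the width bound into a bound on $\ell(\mu)$.

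You lift $\mu$ to the horospherical annulus $\partial\mb{C}_{\ep_Y}(\theta,Q_Y)$ and concatenate $\lceil W/h\rceil$ periods to produce an arc $\kappa$ spanning the convex core. Combining $\ell(\kappa)\ge d_{\mc{C}(Y)}(\delta^-,\delta^+)/K$ with $\ell(\kappa)\le\lceil W/h\rceil\,\ell(\mu)$ yields $\ell(\mu)\ge (h/W)\cdot d_{\mc{C}(Y)}(\delta^-,\delta^+)/K$. The problem is that $W$ itself grows with $d_{\mc{C}(Y)}(\delta^-,\delta^+)$: the annulus $\partial\mb{C}_{\ep_Y}(\theta,Q_Y)\cap{\rm CC}(Q_Y)$ is precisely a path (after small perturbation) of the type to which Theorem~\ref{thm:effcusp} gives a \emph{lower} bound, so $W\gtrsim d_{\mc{C}(Y)}(\delta^-,\delta^+)/K$ as well. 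Your inequality then collapses to $\ell(\mu)\gtrsim h$, which is trivial. The proposed fix---``replace $\mu$ by a representative minimizing the transverse component and bound $W/h$ by a polynomial in $|\chi(Y)|$''---cannot work: $h$ is an invariant of the slope and the flat structure, and there is no universal upper bound on $W$.

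The paper sidesteps this entirely. Instead of iterating $\mu$ along the single annulus, it concatenates a \emph{single} lift $\mu'$ of $\mu$ (ending at $y\in\partial\mb{C}_\ep(\theta,Q_Y)$) with the lift $\delta_\theta'$ of a short non-peripheral loop $\delta_\theta\subset Y$ based at $y$. The endpoint $z$ of $\delta_\theta'$ lies on some component $\mc{O}$ of $p_Y^{-1}(\mb{C}_{\ep_Y}(\partial V_\gamma))$, and the key topological lemma shows that $\mc{O}$ is neither $\mb{C}_{\ep_0}(\theta)$ nor $\mb{C}_{\ep_0}(\theta')$ and hence (by an analysis of where preimage components can meet $\partial{\rm CC}(Q_Y)$) lies entirely outside ${\rm CC}(Q_Y)$. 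Thus the single path $\mu'\star\delta_\theta'$ already reaches $\partial{\rm CC}(Q_Y)$ from a point $x$ chosen equidistant to both sides, giving $\ell(\mu)+\ell(\delta_\theta)\ge\tfrac12(\text{width})$ and hence $\ell(\mu)\ge c_3 d_{\mc{C}(Y)}(\delta^-,\delta^+)/|\chi(Y)|^{264}-\ell(\delta_\theta)$, with $\ell(\delta_\theta)$ controlled by Lemma~\ref{lem:short gen}. The missing idea in your argument is this use of a non-peripheral loop to force the lifted path onto a horoball component that is disjoint from the convex core, rather than remaining on the main cusp annulus where the width information is already spent.
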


\begin{remark}
\label{rmk:meridian}
Numerically, we can choose $c_3=\ep_0^{150}/2^{870}$ and $c_4=40\log(64/\ep_0)$.
\end{remark}

\begin{proof}
Consider the covering $p_Y:Q_Y\to M(\nu^-,\nu^+)$ corresponding to $\pi_1({\rm int}(Y)\times\{0\})$. By Bonahon's Tameness \cite{Bo86}, it is a hyperbolic manifold diffeomorphic to ${\rm int}(Y)\times\mb{R}$ (where each component $\theta\subset\partial Y$ is parabolic). By Canary-Thurston's Covering Theorem \cite{C96}, its convex core ${\rm CC}(Q_Y)$ has finite volume and, hence, is isotopic to ${\rm int}(Y)\times[-1,1]-(P^+\times\{1\}\cup P^-\times\{-1\}$ (where $P^\pm\subset Y\times\{\pm1\}$ are the multicurves representing the accidental parabolics of $Q_Y$) and intersects the boundary of each $\ep$-cuspidal neighborhood in a compact annulus. The surface ${\rm int}(Y)\times\{0\}$ lifts diffeomorphically to $Q_Y$ and we use it as a marking of $\pi_1(Q_Y)$. By Lemma \ref{lem:shorter} the geodesic representatives (in $M(\nu^-,\nu^+)$ and hence) in $Q_Y$ of the curves $\delta^\pm$ have length at most $2\cdot{\rm arccosh}(|\chi(Y)|+1)$. We first apply Theorem \ref{thm:effcusp} to deduce that the width of the $\ep$-non-cuspidal part of the convex core ${\rm CC}(Q_Y)$ is large. Denote by $\partial_0{\rm CC}(Q_Y),\partial_1{\rm CC}(Q_Y)$ the two (possibly disconnected if there are accidental parabolics) sides of the convex core ${\rm CC}(Q_Y)$.

\begin{lem}
\label{lem:dist comp}
Let $\ep_Y\le\ep\le\ep_0$. Let $Q_0:={\rm CC}(Q_Y)-\bigcup_{\theta\subset\partial Y}{\mb{C}_\ep(\theta)}$ be the $\ep$-non-cuspidal part of the convex core ${\rm CC}(Q_Y)$. The distance in $Q_0$ between a boundary component $C_0\subset\partial_0{\rm CC}(Q_Y)\cap Q_0$ and a boundary component of $C_1\subset\partial_1{\rm CC}(Q_Y)\cap Q_0$ is at least
\[
\frac{c_5}{|\chi(Y)|^{264}}d_{\mc{C}(Y)}(\delta^-,\delta^+)
\]
for some universal constant $c_5>0$.
\end{lem}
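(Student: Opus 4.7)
The plan is to apply Theorem~\ref{thm:effcusp} directly to the cover $Q_Y$, taking as input the short curves $\delta^-,\delta^+\subset Y$ supplied by Lemma~\ref{lem:shorter}. First, fix any arc $\kappa\subset Q_0$ joining $C_0$ to $C_1$; since $Q_0\subset{\rm CC}(Q_Y)-\bigcup_{\theta\subset\partial Y}\partial\mb{C}_\ep(\theta)$, the arc $\kappa$ automatically satisfies the hypothesis of Theorem~\ref{thm:effcusp} that it avoid the horotorus boundaries and join boundary components of ${\rm CC}(Q_Y)$ on opposite sides.

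Next, I would choose $L:=2\log(256\pi^2|\chi(Y)|^4/\ep^2)$. Since $\ep\le\ep_0<1$, this value dominates the length bound $2\cdot{\rm arccosh}(|\chi(Y)|+1)$ on the $\delta^\pm$ from Lemma~\ref{lem:shorter}, so both the length and the lower-bound hypotheses on $L$ in Theorem~\ref{thm:effcusp} are met. Applying the theorem gives
$$d_{\mc{C}(Y)}(\delta^-,\delta^+)\le\bigl(2+2\log_2(Le^{L/2})\bigr)\,\frac{\sinh(2\ep+2L)-2(\ep+L)}{\sinh(2\ep)-2\ep}\,\ell(\kappa).$$
Rearranging and taking the infimum over all such $\kappa$ yields $d_{Q_0}(C_0,C_1)\ge d_{\mc{C}(Y)}(\delta^-,\delta^+)/F$, where $F$ denotes the explicit multiplier above.

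The remaining work is purely to bound $F$ by a polynomial in $|\chi(Y)|$ with an $\ep_0$-dependent coefficient. Using $\sinh(x)-x\ge x^3/6$ in the denominator and $\sinh(x+y)\le e^{x+y}/2$ in the numerator, the ratio of hyperbolic sines is at most a universal constant times $e^{2L}/\ep^{3}$. Since $e^{2L}=(256\pi^2|\chi(Y)|^4/\ep^2)^{4}$, this is bounded by a constant times $|\chi(Y)|^{16}/\ep^{11}$. The extra factor $2+2\log_2(Le^{L/2})$ is $O(L)=O(\log|\chi(Y)|/\ep_0)$, and can be absorbed into a polynomial of slightly larger degree. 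Finally, plugging the hypothesis $\ep\ge\ep_Y=\ep_0^{10}/(2^{37}\pi^{8}|\chi(Y)|^{16})$ converts $\ep^{-11}$ into a polynomial in $|\chi(Y)|$, producing a bound of the form $F\le C(\ep_0)\,|\chi(Y)|^{264}$, which rearranges to the stated inequality.

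The primary obstacle is purely arithmetic: all of the geometric content has been packaged into Theorem~\ref{thm:effcusp} and Lemma~\ref{lem:shorter}, so the only delicate point is tracking the interaction between the polynomial amplification $\ep_Y^{-11}\propto|\chi(Y)|^{176}$, the factor $e^{2L}\propto|\chi(Y)|^{16}/\ep^{8}$, and the logarithmic correction $\log_2(Le^{L/2})$. One should also verify, before invoking Theorem~\ref{thm:effcusp}, that ${\rm CC}(Q_Y)$ has finite volume (this follows from Canary--Thurston Covering applied to $Q_Y\to M(\nu^-,\nu^+)$, as already used in the preamble to the lemma).
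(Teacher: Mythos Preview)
Your proposal is correct and follows essentially the same approach as the paper: both apply Theorem~\ref{thm:effcusp} with $L=2\log(256\pi^2|\chi(Y)|^4/\ep^2)$, invert the resulting inequality, bound the hyperbolic-sine ratio by $\ep^3/e^{cL}$, and then substitute $\ep\ge\ep_Y$ to obtain the polynomial dependence on $|\chi(Y)|$. The only cosmetic difference is that the paper absorbs the factor $2+2\log_2(Le^{L/2})$ crudely as a full power $e^L$ (so works with $e^{3L}$ rather than your $e^{2L}$ plus a logarithm), which is why the exponent lands exactly on $264$; your slightly sharper bookkeeping would give a smaller exponent and hence still yields the stated bound.
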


\begin{proof}
By Theorem \ref{thm:effcusp}, the distance between $C_0,C_1$ is at least
\[
\frac{1}{2\log_2(Le^{L/2})+2}\cdot\frac{\sinh{2\ep}-2\ep}{\sinh(2\ep+2L)-2(\ep+L)}d_{\mc{C}(Y)}(\delta^-,\delta^+)
\]
with $L=4\cdot{\rm arccosh}(|\chi(Y)|+1)+2\log(4\pi|\chi(Y)|/\ep^2)$. Using the approximation $(\sinh{2\ep}-2\ep)/(\sinh(2\ep+2L)-2(\ep+L))\ge\ep^3/\sinh(2\ep+2L)\ge 2\ep^3/e^{2\ep+2L}$ (as in \cite[Remark 3.7]{APT}) and the fact that $(2\log_2(Le^{L/2})+2)e^{2\ep+2L}\le 4e^{3L}$ we get
\[
\frac{1}{2\log_2(Le^{L/2})+2}\cdot\frac{\sinh{2\ep}-2\ep}{\sinh(2\ep+2L)-2(\ep+L)}\ge\frac{\ep^3}{2e^{3L}}.
\]
Combining with $L=2\log(256\pi^2|\chi(Y)|^4/\ep^2)$ we get the lower bound
\[
\frac{\ep^3}{2e^{3L}}\ge\frac{\ep^{15}}{2^{49}\pi^{12}|\chi(Y)|^{24}}.
\]
The conclusion follows from some calculus and $\ep\ge\ep_Y=\ep_0^{10}/2^{37}\pi^8|\chi(Y)|^{16}$.
\end{proof}

\begin{remark}
\label{rmk:distance cusp}
Numerically, we can choose $c_5=\ep_0^{150}/2^{868}$.
\end{remark}

We now argue that the fact that the width of $Q_0\subset{\rm CC}(Q_Y)$ is large implies that every slope of $\partial\mb{C}_{\ep_Y}(\partial V_\theta)$ not homotopic to $\theta$ is long. The heuristic idea is that a loop on $\partial\mb{C}_{\ep_Y}(\partial V_\gamma)$ representing a slope that is not homotopic into $\partial Y$ suitably lifts to an arc in $Q_Y$ that intersects two boundary components of ${\rm CC}(Q_Y)$ on opposite sides of ${\rm CC}(Q_Y)$. So it must be long.

In order to understand the lifts of a slope on $\partial\mb{C}_{\ep_Y}(\partial V_\gamma)$ to $Q_Y$, we need a good understanding of the pre-image $p_Y^{-1}(\mb{C}_{\ep_Y}(\partial V_\gamma))$. This depends on the algebraic structure of the fundamental group, in particular on the interaction between $\pi_1(\partial V_\gamma)$ and $\pi_1(Y)$. The main properties that we need are summarized in the following lemma.

\begin{lem}
\label{lem:preimage cusp}
Consider $\partial V_\gamma\cap Y=\theta\cup\theta'$ where $\theta,\theta'$ are both parallel to $\gamma$ in $S$ but on different sides. Fix a basepoint $x\in\theta\subset\partial V_\gamma\cap Y$. We have the following:
\begin{enumerate}
    \item{$\pi_1(\partial V_\gamma,x)\cap\pi_1(Y,x)=\langle\theta\rangle$.}
    \item{If $\mu\in\pi_1(\partial V_\gamma,x)-\langle\theta\rangle$ and $\beta\in\pi_1(Y,x)$ is non-peripheral, then $\mu\beta\mu^{-1}\not\in\pi_1(Y,x)$.}
    \item{We have $p_Y^{-1}(\mb{C}_{\ep_Y}(\partial V_\gamma))\cap{\rm CC}(Q_Y)\subset\mb{C}_{\ep_0}(\theta)\cup\mb{C}_{\ep_0}(\theta')$.} 
\end{enumerate}
\end{lem}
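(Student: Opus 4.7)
The plan is to establish the three statements in order; they become progressively more geometric, with (3) building on (1) and (2).

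For (1), the argument is purely algebraic. The subgroup $\pi_1(\partial V_\gamma,x)\cong\mb{Z}^2$ is abelian, so its intersection with $\pi_1(Y,x)$ inside $\pi_1(M)$ is an abelian subgroup of the surface group $\pi_1(Y)$, hence infinite cyclic. The curve $\theta\subset\partial Y$ is represented by a simple closed curve on $Y$, so $\theta$ is primitive in $\pi_1(Y)$, and the cyclic intersection containing this primitive element must equal $\langle\theta\rangle$.

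For (2), I would reduce the hypothesis to a centralizer identity in the Kleinian group $\pi_1(M)$. Write $\mu=\mu_\gamma^a\theta^b$ with $a\ne 0$, where $\mu_\gamma$ is the meridian of the removed solid torus $V_\gamma$, and set $\tilde\beta=\theta^b\beta\theta^{-b}\in\pi_1(Y)$, which is non-peripheral since it is a $\pi_1(Y)$-conjugate of $\beta$. The relation $\mu\beta\mu^{-1}=\beta'$ becomes $\beta'=\mu_\gamma^a\tilde\beta\mu_\gamma^{-a}$. Applying the surjection $\pi_1(M)\twoheadrightarrow\pi_1(S\times[-1,1])=\pi_1(S)$ induced by the inclusion $M\hookrightarrow S\times[-1,1]$, which kills every meridian and restricts to the embedding $\pi_1(Y)\hookrightarrow\pi_1(S)$, gives $\beta'=\tilde\beta$ in $\pi_1(Y)$. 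Substituting back yields the commutator identity $[\mu_\gamma^a,\tilde\beta]=1$ in $\pi_1(M)$. Now endow $(M,\partial V)$ with a hyperbolic structure via Lemma \ref{lem:hyperbolization1} and Theorem \ref{thm:hyperbolization}: $\mu_\gamma^a$ is a non-trivial parabolic element with fixed point $p\in\partial\mb{H}^3$, and the centralizer of a non-trivial parabolic in a discrete torsion-free Kleinian group equals the maximal parabolic subgroup at $p$, which here is $\pi_1(\partial V_\gamma)$. Therefore $\tilde\beta\in\pi_1(\partial V_\gamma)\cap\pi_1(Y)=\langle\theta\rangle$ by (1), contradicting the non-peripheralness of $\tilde\beta$.

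For (3), I would use the bijection between components of $p_Y^{-1}(\mb{C}_{\ep_Y}(\partial V_\gamma))$ and the double cosets $\pi_1(Y)\backslash\pi_1(M)/\pi_1(\partial V_\gamma)$: the component at $[g]$ is covered by the translate $gB$ of the standard $\ep_Y$-horoball (centered at $gp$), with stabilizer $\pi_1(Y)\cap g\pi_1(\partial V_\gamma)g^{-1}$. The centralizer argument used in (2) shows that a non-trivial stabilizer forces $gp$ to be the fixed point of a parabolic of $\pi_1(Y)$. Geometric finiteness of $\pi_1(Y)$, granted by the finite volume of ${\rm CC}(Q_Y)$, implies that the parabolic fixed points of $\pi_1(Y)$ corresponding to the cusp $\partial V_\gamma$ split into exactly two $\pi_1(Y)$-orbits, namely those of $\theta$ and $\theta'$; so $[g]$ is one of two double cosets, and the corresponding $\ep_Y$-horoball quotient is contained in the $\ep_0$-Margulis cusp $\mb{C}_{\ep_0}(\theta)$ or $\mb{C}_{\ep_0}(\theta')$. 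For the remaining cosets with trivial stabilizer, $gp$ is a $\pi_1(M)$-parabolic fixed point that is not $\pi_1(Y)$-parabolic; since parabolic fixed points of $\pi_1(M)$ are not conical limit points of $\pi_1(M)$, they cannot be conical limit points of $\pi_1(Y)$ either, so geometric finiteness of $\pi_1(Y)$ forces $gp\notin\Lambda_Y$. The horoball $gB$ is then disjoint from $\text{CH}(\Lambda_Y)$, so its projection misses ${\rm CC}(Q_Y)$.

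The main obstacle is (2); the key insight is that the Dehn filling $M\hookrightarrow S\times[-1,1]$ lets us convert the conjugation identity into a commutator identity, which is immediately incompatible with the rigid structure of centralizers of parabolic elements in Kleinian groups.
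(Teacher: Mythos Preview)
Your arguments for (1) and (2) are correct. For (1) you give a cleaner proof than the paper (which chases a homotopy through $V_\gamma$ and $S\times[-1,1]$): you simply observe that the intersection is abelian inside the free group $\pi_1(Y)$, hence cyclic, and contains the primitive element $\theta$. For (2) your route is genuinely different from the paper's. The paper builds an annulus $\mb{S}^1\times[0,1]\to M$ realizing the conjugacy, makes it transverse to $\gamma_Y\times[-1,1]$, removes inessential intersections, and concludes that $\mu$ is homotopic into $Y$. You instead push the conjugacy through the filling map $\pi_1(M)\to\pi_1(S)$ to obtain a commutator relation $[\mu_\gamma^a,\tilde\beta]=1$, and then kill it with the structure of centralizers of parabolics in a Kleinian group. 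Both are short; the paper's argument is purely topological and reusable without any hyperbolic structure, while yours is slicker but leans on the hyperbolization of $(M,\partial V)$.

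Your argument for (3), however, has a real gap in the last step. From $gp\notin\Lambda_Y$ you conclude that the horoball $gB$ is disjoint from ${\rm CH}(\Lambda_Y)$. This does not follow: a horoball based at a point of the domain of discontinuity can certainly meet the convex hull if the limit set comes close to the basepoint. Nothing in your outline bounds how close $\Lambda_Y$ can approach $gp$, so nothing controls the size of horoball that stays outside ${\rm CH}(\Lambda_Y)$. Note in particular that your argument never uses the specific value of $\ep_Y$; if it worked, it would prove the statement with $\ep_Y$ replaced by any $\ep\le\ep_0$, and that stronger claim is false.

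The paper's proof of (3) is organized differently and crucially exploits the gap $\ep_Y\ll\ep_0$. If a component $\mc{O}$ of $p_Y^{-1}(\mb{C}_{\ep_Y}(\partial V_\gamma))$ meets ${\rm CC}(Q_Y)$ then (having infinite volume) it meets $\partial{\rm CC}(Q_Y)$, which is a pleated surface $f:({\rm int}(Y),\sigma)\to Q_Y$. Composing with $p_Y$ gives a $\pi_1$-injective pleated surface in $M(\nu^-,\nu^+)$ whose image enters the $\ep_Y$-thin part; Lemma~\ref{lem:thick to thick}, with $\ep_Y$ chosen precisely so that this lemma applies, forces the corresponding point of $({\rm int}(Y),\sigma)$ into the $\ep_0$-thin part, whose core must then be homotopic to $\theta$ or $\theta'$. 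This places $\mc{O}$ inside $\mb{C}_{\ep_0}(\theta)\cup\mb{C}_{\ep_0}(\theta')$. Your limit set dichotomy correctly identifies the two distinguished components, but to finish (3) you need an argument of this kind for the remaining ones; the bare fact $gp\notin\Lambda_Y$ is not enough.
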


\begin{proof}[Proof of Lemma \ref{lem:preimage cusp}]
{\bf Property (1)}. Consider $\zeta\in\pi_1(Y,x)$ homotopic relative to the basepoint to a loop $\zeta'\in\pi_1(\partial V_\gamma,x)$. Note that, since $\pi_1(V_\gamma,x)=\langle\theta\rangle$, the loop $\zeta'$ is homotopic within $V_\gamma$ and relative to the basepoint to $\theta^k$ for some $k$. Thus $\zeta\in\pi_1(Y,x)$ is homotopic relative to the basepoint to $\theta^k$ in $S\times[-1,1]$ and, therefore, also on $S$. As $\zeta\subset Y$, we can choose the pointed homotopy between $\zeta$ and $\theta^k$ to take place in $Y$. Thus $\zeta=\theta^k$ for some $k$.

{\bf Property (2)}. If $\mu\beta\mu^{-1}=\beta'\in\pi_1(Y,x)$, then there is a homotopy $h:(\mb{S}^1\times[0,1],\partial\mb{S}^1\times[0,1])\to (M,Y\cap M)$ such that $\mb{S}^1\times\{0\}$ maps to $\beta$, $\mb{S}^1\times\{1\}$ maps to $\beta'$, and $\{{\rm pt}\}\times[0,1]$ maps to $\mu$. Up to small perturbations, we can assume that $h$ is transverse to $\gamma_Y\times[-1,1]$. The intersection $h^{-1}(\gamma_Y\times[-1,1])$ consists of a finite number of simple closed curves on $\mb{S}^1\times[0,1]$. None of them can be homotopic to $\mb{S}^1\times\{0\}$ otherwise $\beta$ would be homotopic to one of the components of $\gamma_Y$ in $S\times[-1,1]$, hence in $S$, hence in $Y$ contradicting the assumption that $\beta$ is non-peripheral. Thus all the intersections are null-homotopic on $\mb{S}^1\times\{0\}$ and on $\gamma_Y\times[-1,1]$ (as the latter is $\pi_1$-injective) and can be removed via a homotopy of $h$ relative to the boundary. This means that the image of $h$ is in the complement of $S\times[-1,1]-\gamma_Y\times[-1,1]$ which deformation retracts to $(S-\gamma_Y)\times\{0\}$. Thus $\mu$ is homotopic relative to the endpoints into $Y$. This implies $\mu\in\pi_1(\partial V_\gamma,x)\cap\pi_1(Y,x)=\langle\gamma\rangle$ (by Property (1)) contradicting our assumption that $\mu\not\in\langle\theta\rangle$. 

{\bf Property (3)}. Let $\mc{O}\subset p_Y^{-1}(\mb{C}_{\ep_Y}(\partial V_\gamma))$ be a component that intersects $\partial{\rm CC}(Q_Y)$. As described by Thurston (see \cite[Chapter II]{CEG:notes_on_notes}), each component of the boundary of the convex core of $Q_Y$ can be parameterized as a pleated surface $f:({\rm int}(Y),\sigma)\to Q_Y$ homotopic to the inclusion ${\rm int}(Y)\times\{0\}$. Let $C\subset\partial{\rm CC}(Q_Y)$ be a component that intersects $\mc{O}$ at a point $f(z)\in\mc{O}\cap C$. The composition of $f$ with the projection map $p_Y:C\to M(\nu^-,\nu^+)$ is a $\pi_1$-injective pleated surface in $M(\nu^-,\nu^+)$. As the projection $p_Yf(z)\in p_Y(\mc{O})=\mb{C}_{\ep_Y}(\partial V_\gamma)$ lies in the $\ep_Y$-cuspidal neighborhood associated to $V_\gamma$, by Lemma \ref{lem:thick to thick} we have that $z$ lies in the $\ep_0$-thin part of $({\rm int}(Y),\sigma)$. Let $\beta\subset Y$ be the (homotopy class of the) core of the component of the $\ep_0$-thin part containing $z$. There is a loop $\beta_z$ of length $\le2\ep_0$ based at $z$ and homotopic to $\beta$. As the image $f(z)$ is contained in $\mb{C}_{\ep_Y}(\partial V_\gamma)$, the image of such loop is contained in $\mb{C}_{\ep_0}(\partial V_\gamma)$. Therefore $f(\beta)$ is homotopic to $\zeta\in\pi_1(V_\gamma)$. However, as we observed in the proof of Property (1), $\zeta$ is homotopic in $V_\gamma$ to $\gamma^k$ and, from the same argument as before, we deduce that $f(\beta)$ is homotopic to $\gamma^k$ in $Y$. By $\pi_1$-injectivity, this happens only if $\beta$ is homotopic to either $\theta$ or $\theta'$. Thus, the $\ep_0$-thin part of $z$ is actually the $\ep_0$-cuspidal part ${\rm\bf cusp}(\theta,\ep_0)\cup{\rm\bf cusp}(\theta',\ep_0)$. As the inclusion $C\subset Q_Y$ is 1-Lipschitz, we also have ${\rm\bf cusp}(\theta,\ep_0)\subset\mb{C}_{\ep_0}(\theta,Q_Y)$ and ${\rm\bf cusp}(\theta',\ep_0)\subset\mb{C}_{\ep_0}(\theta',Q_Y)$. Thus $\mc{O}$ intersects $\mb{C}_{\ep_0}(\theta,Q_Y)$ or $\mb{C}_{\ep_0}(\theta',Q_Y)$. This implies that $\mc{O}\subset\mb{C}_{\ep_0}(\theta,Q_Y)\cup\mb{C}_{\ep_0}(\theta',Q_Y)$ (as $p_Y(\partial\mb{C}_{\ep_0}(\theta)),p_Y(\partial\mb{C}_{\ep_0}(\theta'))=\mb{C}_\ep(\partial V_\gamma)$ for some $\ep\le\ep_0$ so $\partial\mc{O}$ cannot intersect $\partial\mb{C}_{\ep_0}(\theta))\cup\partial\mb{C}_{\ep_0}(\theta')$ otherwise $p_Y(\mc{O})=\mb{C}_{\ep_Y}(\partial V_\gamma)$ would intersect $\partial\mb{C}_\ep(\partial V_\gamma)$).
\end{proof}

We can now conclude the proof of the proposition. Recall that we want to prove that a slope on $\partial\mb{C}_{\ep_Y}(\partial V_\gamma)$ not homotopic into $Y$ is long by showing that it suitably lifts to an arc that intersects two boundary components of ${\rm CC}(Q_Y)$ on opposite sides of ${\rm CC}(Q_Y)$. We implement and make precise this idea as follows. 

Observe that $p_Y^{-1}(\partial\mb{C}_{\ep_Y}(\theta))\subset Q_Y$ contains exactly two components of the form $\mb{C}_{\ep}(\theta),\mb{C}_{\ep}(\theta')$ for some $\ep\in[\ep_Y,\ep_0]$. By the Filling Theorem \cite{C96}, there is a pleated surface $f:({\rm int}(Y),\sigma)\to Q_Y$ homotopic to the inclusion of ${\rm int}(Y)$ that meets $\partial\mb{C}_{\ep}(\theta)$ at a point $x=f(\overline{x})$ such that the minimal distance in $Q_0:={\rm CC}(Q_Y)-\bigcup_{\theta\subset\partial Y}{\mb{C}_{\ep_Y}(\theta)}$ of $x$ from a boundary component in $\partial{\rm CC}_0(Q_Y)$ and a boundary component in $\partial_1{\rm CC}(Q_Y)$ are equal, that is
\[
\min_{C_0\subset\partial_0{\rm CC}(Q_Y)}\{d_{Q_0}(x,C_0\cap Q_0)\}=\min_{C_1\subset\partial_1{\rm CC}(Q_Y)}\{d_{Q_0}(x,C_1\cap Q_0)\}
\]
and let $\kappa_0,\kappa_1\subset Q_0$ be two segments realizing those distances. Note that $\kappa=\kappa_0\cup\kappa_1$ satisfies the assumption of Theorem \ref{thm:effcusp} and Lemma \ref{lem:dist comp}. Thus, it has length 
\[
\ell(\kappa)\ge\frac{c_5}{|\chi(Y)|^{264}}d_Y(\delta^-,\delta^+).
\]
Hence $\ell(\kappa_0)=\ell(\kappa_1)=\ell(k)/2\ge (c_5/2|\chi(Y)|^{264})d_Y(\delta^-,\delta^+)$.

Note that (since $f$ is 1-Lipschitz and ${\rm inj}_x(Q_Y)\ge\ep$) we have ${\rm inj}(\overline{x})\ge\ep$. Let $\overline{\delta}_\theta$ be a loop based at $\overline{x}$ of length $\ell(\overline{\delta}_\theta)\le 2\log(256\pi^2|\chi(Y)|^2/\ep^2)$ not homotopic to $\theta$ (as given by Lemma \ref{lem:short gen}) and denote by $\delta_\theta:=f(\overline{\delta}_\theta)$ the image. It is a loop based at $x\in\partial\mb{C}_{\ep}(\theta,Q_Y)\cap{\rm CC}(Q_Y)$ with the same upper bound on the length. 

Consider the concatenation $\mu\star p_Y(\delta_\theta)$. Note that it is does not belong to $\pi_1(Y,p_Y(x))$ otherwise $\mu$ would be in $\pi_1(Y,p_Y(x))\cap\pi_1(\partial V_\gamma,p_Y(x))=\langle\theta\rangle$ by Lemma \ref{lem:preimage cusp} and this contradicts our assumption that $\mu$ is not homotopic to $Y$. 

Lift $\mu$ with basepoint $x$ obtaining a path $\mu'\subset\partial\mb{C}_{\ep}(\theta)$ with the other endpoint $y$. Lift $p_Y(\delta_\theta)$ to $Q_Y$ with basepoint $y$ obtaining a segment $\delta_\theta'$ with the other endpoint $z$ on a component $\mc{O}$ of $p_Y^{-1}(\mb{C}_{\ep_Y}(\partial V_\gamma))$. We claim that $z$ is not in $\mb{C}_{\ep}(\theta)$. In fact, if also $z$ lies on $\mb{C}_{\ep}(\theta)$, then we can join $y$ to $z$ in $\mb{C}_{\ep}(\theta)$ via an arc $\eta$. The concatenation $\delta_\theta'\star\eta$ is a loop in $\pi_1(Y,z)$. We deduce that $p_Y(\delta_\theta')p_Y(\eta)\in\pi_1(Y,z)$ and $p_Y(\eta)\in\pi_1(\partial V_\gamma,z)$. Hence $p_Y(\eta)\in\pi_1(\partial V_\gamma,p_Y(z))\cap\pi_1(Y,p_Y(z))=\langle\theta\rangle$ (by Lemma \ref{lem:preimage cusp}). As powers of $\theta$ lift to closed loops at every point of the pre-image of $p_Y(x)$ on $\mb{C}_{\ep}(\gamma)$ while $\eta$ is not closed by assumption we get the desired contradiction. 

We claim that $z$ is not in $\mb{C}_{\ep_0}(\theta')$. In fact, if $z\in\mb{C}_{\ep_0}(\theta')$ we can lift $\theta$ starting at $z$ obtaining a path $\theta''$ of length $\le 2\ep_0$ with endpoint $z'$. Note that $\theta'\subset\mb{C}_{3\ep_0}(\theta')$ so we can find a path $\eta\subset\mb{C}_{3\ep_0}(\theta')$ joining $z'$ to $z$ so that the concatenation $\theta'\star\eta$ is a loop homotopic to $\theta'$. The projection $p_Y(\eta)$ is a loop in $\pi_1(\partial V_\gamma,p_Y(x))$ and the projection $p_Y(\theta'\star\eta)=\theta p_Y(\eta)$ belongs to $\pi_1(Y,p_Y(x))$, thus $p_Y(\eta)\in\pi_1(Y,p_Y(x))\cap\pi_1(\partial V_\gamma,p_Y(x))=\langle\theta\rangle$ (by Lemma \ref{lem:preimage cusp}). However, we also have $p_Y(\theta'\star\eta)$ is a conjugate (in $\pi_1(Y,p_Y(x))$) of $\theta'$, thus $\theta'$ is conjugate to $\theta^k$ for some $k$. As this is not possible in $\pi_1(Y,p_Y(x))$, we get the desired contradiction.

By Lemma \ref{lem:preimage cusp}, we conclude that the component $\mc{O}$ is disjoint from the convex core ${\rm CC}(Q_Y)$. Thus $\ell(\mu\star\delta_\theta)=\ell(\mu)+\ell(\delta_\theta)$ is larger than the distance between $x$ and a component of $\partial{\rm CC}(Q_Y)\cap Q_0$ which by our choice is at least $\ell(\kappa_0)=\ell(\kappa_1)$. In conclusion, by Lemma \ref{lem:dist comp} and \ref{lem:short gen}, we get
\begin{align*}
\ell(\mu) &=\ell(\mu\star\delta_\theta)-\ell(\delta_\theta)\\
 &\ge\frac{c_5/2}{|\chi(Y)|^{264}}d_Y(\delta^-,\delta^+)-2\log\left(\frac{256\pi^2|\chi(Y)|^4}{\ep^2}\right)\\
 &\ge\frac{c_5/2}{|\chi(Y)|^{264}}d_Y(\delta^-,\delta^+)-2\log\left(\frac{256\pi^2|\chi(Y)|^4}{\ep_Y^2}\right).
\end{align*}
(In the second inequality we used $\ep\in[\ep_Y,\ep_0]$). The conclusion follows from a little bit of calculus recalling that $\ep_Y=\ep_0^{10}/2^{37}\pi^8|\chi(Y)|^{16}$. 
\end{proof}

\subsection{The hyperbolic Dehn filling step}
The last step of the proof consists of an application of the Hyperbolic Dehn Filling Theorem in the form for topologically tame hyperbolic 3-manifolds proved by Futer, Purcell, and Schleimer \cite{FPS22b}. In order to state it we need to recall the definition of {\em normalized length}.

\begin{dfn}[Normalized Length]
Let $\ep>0$ be smaller than a Margulis constant for dimension 3. Let $N$ be a compact orientable 3-manifold endowed with a complete hyperbolic metric on ${\rm int}(N)$. Consider the $\ep$-cuspidal neighborhood $\mb{C}_\ep(T)\subset{\rm int}(N)$ of a rank 2-cusp of $N$ corresponding to a torus boundary component $T\subset\partial N$. The restriction of the hyperbolic metric endowes the boundary $\partial\mb{C}_\ep(T)$ with a flat metric. For every homotopy class of simple closed curve $\mu\subset\partial\mb{C}_\ep(T)$ we define its normalized length as
\[
\frac{{\rm Length}(\overline{\mu})}{\sqrt{{\rm Area}(\partial\mb{C}_\ep(T))}}
\]
where $\overline{\mu}$ is the flat geodesic representative of $\mu$ on $\partial\mb{C}_\ep(T)$. It is a standard computation to check that the above does not depend on the particular choice of $\ep$. Additionally, it is a well-known consequence of the pared 3-manifold properties of $(N,T_1\cup\cdots\cup T_n)$, where $T_1,\cdots, T_n$ are the torus components of $\partial N$, that different simple closed curves on $\partial\mb{C}_\ep(T)$ are not homotopic in $N$ so the normalized length is well-defined for the homotopy class in $N$ of every curve homotopic into a toroidal boundary component.
\end{dfn}

We can now state the version of the Hyperbolic Dehn Filling Theorem for tame hyperbolic 3-manifolds proved in \cite{FPS22b}.

\begin{thm}[{\cite[Theorem 1.3]{FPS22b}}]
\label{thm:filling}
Fix $0<\ep\le\log(3)$ and $J>1$. Let $N$ be the interior of a compact 3-manifold and let $\Sigma\subset N$ be a link such that $N-\Sigma$ admits a hyperbolic structure. Suppose that in the hyperbolic structure on $N-\Sigma$ the {\em total normalized length} of the meridians of $\Sigma$ satisfies
\[
\frac{1}{\sum_{\gamma\subset\Sigma}{\frac{1}{L(\mu_\gamma)^2}}}\ge 4\max\left\{\frac{2\pi\cdot 6771\cosh^5(0.6\ep+0.1475)}{\ep^5}+11.7,\frac{2\pi\cdot 11.35}{\ep^{5/2}\log(J)}+11.7\right\}
\]
where $L(\mu_\gamma)$ is the normalized length of the meridian $\mu_\gamma$ of the component $\gamma\subset\Sigma$. Then $N$ admits a hyperbolic structure with the same end invariants as those of $N-\Sigma$ and for which $\Sigma$ is a geodesic link where each component has length at most $2\ep$. Moreover, there are $J$-bilipschitz inclusions
\[
\phi:(N)_{\ep}\to(N-\Sigma)_{\ep/1.2},\quad\psi:(N-\Sigma)_{\ep}\to(N)_{\ep/1.2}.
\]
\end{thm}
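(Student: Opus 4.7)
The plan is to follow the cone-manifold deformation approach of Hodgson--Kerckhoff, as extended by Bromberg and Brock--Bromberg to the geometrically finite setting and by Futer--Purcell--Schleimer to the topologically tame setting. Starting from the complete hyperbolic structure on $N-\Sigma$ (which realizes the components of $\Sigma$ as rank-two cusps), I would construct a one-parameter family of hyperbolic cone-manifold structures $\{M_t\}_{t\in[0,1]}$ on $N$ with singular locus along $\Sigma$, where the cone angle at each component of $\Sigma$ interpolates continuously from $0$ (at $t=0$, corresponding to the cusped manifold $N-\Sigma$) up to $2\pi$ (at $t=1$, corresponding to the smooth hyperbolic metric on $N$). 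The end invariants other than those associated to $\Sigma$ are to be kept fixed throughout the deformation; this is what guarantees that $M_1$ has the same end invariants as $M_0$.

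The core of the argument is to prove that the family $\{M_t\}$ can indeed be continued all the way to $t=1$ and to keep quantitative control of the geometry along the way. Existence of the family for small $t$ is a consequence of the deformation theory of geometrically finite representations together with Hodgson--Kerckhoff's local rigidity at the cusp. The derivative of the metric along the path can be represented by a harmonic strain field $\omega_t$ on the thick part of $M_t$, decomposable into a "model" piece supported in a tube neighborhood of $\Sigma$ plus an $L^2$-small correction. The foundational Hodgson--Kerckhoff $L^2$-estimates (in the form generalized to tame manifolds) give a bound on $\|\omega_t\|_{L^2}$ of the form $C(\varepsilon)/L_t(\mu)^2$ where $L_t(\mu)$ is the normalized length of the meridian at time $t$ and $C(\varepsilon)$ is the explicit constant appearing in the theorem's hypothesis.

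The effective hypothesis on the total normalized length $(\sum 1/L(\mu_\gamma)^2)^{-1}$ at $t=0$ is then used as an a-priori bound: integrating the pointwise bilipschitz distortion given by $\omega_t$ along $t\in[0,1]$, one shows that the normalized length can only decrease by a controlled multiplicative factor (smaller than $\sqrt{2}$, say), so the differential inequality forbids $L_t(\mu_\gamma)$ from collapsing to zero and no component of $\Sigma$ can degenerate. This is what closes the bootstrap: the family extends to $t=1$, each singular component has cone angle $2\pi$ (hence is smooth), and the length of the core curve is at most $2\varepsilon$ by direct computation in a tube. Integrating the same strain-field estimate once more produces $J$-bilipschitz inclusions between the $\varepsilon$-thick parts, with the threshold controlled by the second maximum in the hypothesis (the factor involving $\log J$).

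The main obstacle, and what distinguishes the FPS framework from the classical Hodgson--Kerckhoff argument, is extending all of the Hodge-theoretic machinery from finite-volume to tame infinite-volume manifolds: harmonic deformations must be constructed and estimated in the presence of geometrically finite ends (where one must prescribe boundary values on the conformal boundary of the domain of discontinuity) and, more delicately, degenerate ends (where no conformal boundary exists and one must invoke the density theorem of Namazi--Souto to approximate by geometrically finite structures and pass to the limit). Ensuring that the deformation preserves the end invariants outside $\Sigma$ throughout the entire family—rather than just at the two endpoints—is the technical heart of the argument, and requires careful handling of the pleated-surface boundary of the convex core along the deformation.
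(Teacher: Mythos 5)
The paper does not prove this statement: it is quoted verbatim (with a citation) as Theorem~1.3 of Futer--Purcell--Schleimer \cite{FPS22b}, and the present paper uses it as a black box in the Dehn-filling step of the proof of Theorem~\ref{thm:main1}. Your sketch is a fair high-level summary of the proof strategy of the cited work (Hodgson--Kerckhoff cone-deformation with harmonic strain-field $L^2$ estimates, extended to tame geometrically infinite manifolds via Bromberg, Brock--Bromberg, and the Namazi--Souto density theorem), so there is no conflict with the paper --- but since the paper offers no proof of its own, there is nothing to compare your argument against here.
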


Note that 
\[
\frac{1}{\sum_{\gamma\subset\Sigma}{\frac{1}{L(\mu_\gamma)^2}}}\ge\frac{\min_{\gamma\subset\Sigma}\{L(\mu_\gamma)^2\}}{n}
\]
where $n$ is the number of components of $\Sigma$.

We apply Theorem \ref{thm:filling} to $N=S\times[-1,1]$ and $\Sigma=\gamma_Y\times\{0\}$ (we set $M:=N-\Sigma$) choosing $M(\nu^-,\nu^+)$ as hyperbolic structure on ${\rm int}(M)$ and constants $\ep=2\ep_Y=24\pi^2\ep_0^{10}/(2|\chi(Y)+2)^6$ and $J=2$. Let $\gamma$ be a component of $\gamma_Y\times\{0\}$. 

In order to check the condition on the normalized length we use the following.

\begin{lem}
\label{lem:area}    
Let $\ep_Y=\ep_0^{10}/2^{37}\pi^8|\chi(Y)|^{16}$. Let $\overline{\mu}$ be the flat geodesic representative of a slope $\mu\subset\partial\mb{C}_{\ep_Y}(\partial V_\gamma)$ not homotopic to $\gamma$. Then the normalized length of $\mu$ is at least
\[
\sqrt{\frac{{\rm Length}(\overline{\mu})}{\sinh(2\ep_0)}}.
\]
\end{lem}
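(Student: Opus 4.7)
The strategy is to reduce the claim to an area bound on the flat torus $T:=\partial\mb{C}_{\ep_Y}(\partial V_\gamma)$. Setting $L:={\rm Length}(\overline\mu)$ and $A:={\rm Area}(T)$, the definition of normalized length gives $L(\mu)=L/\sqrt{A}$, so squaring and clearing the desired inequality is equivalent to
\[
A\;\le\;\sinh(2\ep_0)\cdot L.
\]

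To produce this, I would first identify $T$ with a flat quotient $\mathbb{R}^2/\Lambda$, where $\Lambda\cong\pi_1(\partial V_\gamma)\cong\mathbb{Z}^2$; under this identification every slope becomes a primitive lattice vector and the flat geodesic length equals the Euclidean length of the representative vector. The workhorse is the elementary identity: for any two primitive, linearly independent $u,v\in\Lambda$,
\[
A\;=\;\frac{\ell(u)\,\ell(v)\,\sin\theta(u,v)}{|\iota(u,v)|}\;\le\;\ell(u)\,\ell(v),
\]
using $|\iota(u,v)|\ge 1$. Equivalently, the width of $T$ in the direction perpendicular to $\mu$ is at most $\ell(v)$ for any primitive $v$ not parallel to $\mu$.

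The natural candidate for $v$ is the systole $v_0$ of $T$. By definition of the $\ep_Y$-Margulis cusp, the $3$-dimensional systole at $\partial T$ equals $2\ep_Y$, which translates to flat length $\ell(v_0)=2\sinh(\ep_Y)$. Since $\ep_Y\le\ep_0$, one has $2\sinh(\ep_Y)\le 2\sinh(\ep_0)\le\sinh(2\ep_0)$, so whenever $\mu\ne\pm v_0$ setting $v=v_0$ gives $A\le\ell(\mu)\cdot 2\sinh(\ep_Y)\le L\sinh(2\ep_0)$ immediately.

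The only delicate case is when $\mu$ itself is the systole of $T$. Here the hypothesis $\mu\not\sim\gamma$ comes into play: $\gamma$ is then a primitive lattice vector not parallel to $\mu$, so $A\le\ell(\mu)\cdot\ell(\gamma)$. The main technical obstacle is then to bound $\ell(\gamma)\le\sinh(2\ep_0)$ on $T$. For this I would pass to the $\pi_1(Y)$-cover $p_Y\colon Q_Y\to M(\nu^-,\nu^+)$ used in Proposition \ref{pro:covering2}: the rank-two cusp of $M$ at $\partial V_\gamma$ is covered by a rank-one cusp of $Q_Y$ at $\theta\sim\gamma$, and on the horocircle $\partial\mb{C}_{\ep_Y}(\theta,Q_Y)$ the loop $\gamma$ has flat length exactly $2\sinh(\ep_Y)$. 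Comparing the horospherical heights $h_{\ep_Y,Q_Y}=|\gamma|/(2\sinh(\ep_Y))$ and $h_{\ep_Y,M}=|v_0|/(2\sinh(\ep_Y))$, together with the explicit choice $\ep_Y=\ep_0^{10}/2^{37}\pi^8|\chi(Y)|^{16}$ and the thick-to-thick control from Lemma \ref{lem:thick to thick}, yields the required upper bound on $\ell(\gamma)$ viewed on $T$. Combining everything,
\[
L(\mu)^2\;=\;\frac{L^2}{A}\;\ge\;\frac{L^2}{L\sinh(2\ep_0)}\;=\;\frac{L}{\sinh(2\ep_0)},
\]
and taking square roots finishes the proof.
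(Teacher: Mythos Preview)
Your reduction to the area inequality $A\le \sinh(2\ep_0)\cdot L$ is exactly right, and the lattice bound $A\le\ell(u)\,\ell(v)$ for independent primitive $u,v$ is the same tool the paper uses. Your Case~1 is also fine: when $\mu$ is not the systole, using $v_0$ with $\ell(v_0)=2\sinh(\ep_Y)\le\sinh(2\ep_0)$ is a clean shortcut.

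The gap is in Case~2. Your ``comparing horospherical heights'' computation only yields
\[
\ell(\gamma)\big|_T \;=\; 2\sinh(\ep_Y)\cdot\frac{|\gamma|}{|v_0|},
\]
and there is no a priori control on the ratio $|\gamma|/|v_0|$: the rank-two cusp in $M$ and the rank-one cusp in $Q_Y$ are cut at genuinely different heights, and the covering alone does not tell you how far apart they are. Your appeal to Lemma~\ref{lem:thick to thick} is the right instinct, but that lemma is a statement about \emph{pleated surfaces}, and you never produce one; without it there is nothing to feed into the thick-to-thick machinery.

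The paper's argument supplies precisely this missing step and, as a bonus, makes the case split unnecessary. Take any pleated surface $f:({\rm int}(Y),\sigma)\to M(\nu^-,\nu^+)$ properly homotopic to the inclusion. By properness $f$ meets $T=\partial\mb{C}_{\ep_Y}(\partial V_\gamma)$ at some point $f(x)$. Since $\ep_Y$ is exactly the constant in Lemma~\ref{lem:thick to thick}, that lemma forces $x$ into the $\ep_0$-thin part of $({\rm int}(Y),\sigma)$, which here means a cusp neighborhood of one of the peripheral curves $\theta,\theta'$ homotopic to $\gamma$. Hence there is a loop of $\sigma$-length $\le 2\ep_0$ at $x$ representing $\gamma$, and its $f$-image is a $\le 2\ep_0$ loop based on $T$ in the class $\gamma$. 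This gives $\ell(\gamma)|_T\le\sinh(2\ep_0)$ directly. Since the hypothesis already guarantees $\mu\not\sim\gamma$, the pair $(\gamma,\mu)$ is independent and $A\le\ell(\gamma)\,\ell(\mu)\le\sinh(2\ep_0)\,L$ with no case analysis at all.
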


\begin{proof}
Consider a (any) pleated surface $f:({\rm int}(Y),\sigma)\to M(\nu^-,\nu^+)$ properly homotopic to the inclusion of ${\rm int}(Y)\times\{0\}$. By properness, there exists a point $x\in{\rm int}(Y)$ such that $f(x)\in\partial\mb{C}_{\ep_Y}(\partial V_\gamma)$. By Lemma \ref{lem:thick to thick}, $x$ is contained in ${\rm\bf cusp}(\theta,\ep_0)\cup{\rm\bf cusp}(\theta',\ep_0)$ (where $\theta,\theta'\subset{\rm int}(Y)$ are the two peripheral curves homotopic to $\gamma$). Let $\gamma_x$ be a geodesic loop of length at most $2\ep_0$ based at $x$ representing $\theta$ or $\theta'$ and denote by $\gamma_{f(x)}$ the geodesic representative of $f(\gamma_x)$ with fixed basepoint. (Note that both $f(\gamma_x)$ and $\gamma_{f(x)}$ is contained in $\mb{C}_{\ep_0}(\partial V_\gamma)$). By basic hyperbolic geometry, the length of the flat geodesic representative $\overline{\gamma}$ of $\gamma_{f(x)}$ on $\partial\mb{C}_{\ep_1}(\partial V_\gamma)$ is ${\rm Length}(\overline{\gamma})=\sinh(\ell(\gamma_{f(x)}))\le\sinh(2\ep_0)$. We deduce the following elementary area bound 
\[
{\rm Area}(\partial\mb{C}_{\ep_1}(\partial V_\gamma))\le{\rm Length}(\overline{\gamma})\cdot{\rm Length}(\overline{\mu})\le\sinh(2\ep_0){\rm Length}(\overline{\mu}).
\]
Thus the normalized length of $\mu$ is at least
\[
\frac{{\rm Length}(\overline{\mu})}{\sqrt{{\rm Area}(\partial\mb{C}_{\ep_1}(\partial V_\gamma))}}\ge\sqrt{\frac{{\rm Length}(\overline{\mu})}{\sinh(2\ep_0)}}.
\]
\end{proof}

Combining Lemma \ref{lem:area} and Proposition \ref{pro:covering2}, we get the following.

\begin{cor}
\label{cor:length meridians}
The normalized lengths of the standard meridians of the components of $\gamma_Y\times\{0\}$ are at least
\[
\sqrt{\frac{c_3}{|\chi(Y)|^{264}}d_Y(\delta^-,\delta^+)-36\log(|\chi(Y)|)-c_4}.
\]
\end{cor}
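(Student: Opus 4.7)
\textbf{Proof plan for Corollary \ref{cor:length meridians}.} The plan is to apply Proposition \ref{pro:covering2} to the standard meridian $\mu_\gamma$ of each component $\gamma\subset\gamma_Y\times\{0\}$, and then convert the resulting flat-length estimate into a normalized-length estimate via Lemma \ref{lem:area}.

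First, I verify the hypothesis of Proposition \ref{pro:covering2}: the meridian $\mu_\gamma$, which by definition bounds a properly embedded disk in the Dehn-filled solid torus $V_\gamma$, is a slope on $\partial\mb{C}_{\ep_Y}(\partial V_\gamma,M(\nu^-,\nu^+))$ that is not conjugate into $\pi_1(Y)$ inside $\pi_1(M)$. Indeed, $\mu_\gamma$ is parabolic in $M(\nu^-,\nu^+)$, so if it were conjugate into $\pi_1(Y)$ it would be conjugate to a peripheral element of $\pi_1(Y)$, namely a power of some boundary component $\theta'\subset\partial Y$. But powers of $\theta'$ are parabolic and lie in the $\mb{Z}^2$ cusp subgroup of the component of $\gamma_Y$ parallel to $\theta'$; since distinct rank-two cusps of $M(\nu^-,\nu^+)$ give rise to distinct maximal parabolic subgroups, this would force $\theta'$ to be parallel to $\gamma$, placing both $\mu_\gamma$ and this power of $\theta'$ in $\pi_1(\partial V_\gamma)$. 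By Lemma \ref{lem:preimage cusp}(1) that common subgroup is $\langle\theta\rangle=\langle\gamma\rangle$, contradicting the fact that the meridian is independent of the longitude in $\pi_1(\partial V_\gamma)\cong\mb{Z}^2$.

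Having verified the hypothesis, Proposition \ref{pro:covering2} immediately yields
\[
\mathrm{Length}(\overline{\mu_\gamma})\ge\frac{c_3}{|\chi(Y)|^{264}}d_{\mc{C}(Y)}(\delta^-,\delta^+)-36\log|\chi(Y)|-c_4
\]
for the geodesic representative $\overline{\mu_\gamma}$ of $\mu_\gamma$ on the flat torus $\partial\mb{C}_{\ep_Y}(\partial V_\gamma,M(\nu^-,\nu^+))$. Since $\mu_\gamma$ is in particular not homotopic to $\gamma$, Lemma \ref{lem:area} applies and gives the normalized-length lower bound
\[
L(\mu_\gamma)\ge\sqrt{\frac{\mathrm{Length}(\overline{\mu_\gamma})}{\sinh(2\ep_0)}}.
\]

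The two bounds combine directly: substituting the first inequality into the radicand of the second and absorbing the universal factor $1/\sinh(2\ep_0)$ into the constants (renaming them $c_3,c_4$) produces exactly the stated square-root expression. No genuine obstacle arises here—the content is entirely in Proposition \ref{pro:covering2} and Lemma \ref{lem:area}; the only point deserving care is the conjugacy verification above, which is where the pared-manifold structure of $(M,\partial V)$ enters through Lemma \ref{lem:preimage cusp}.
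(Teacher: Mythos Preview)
Your approach is exactly the paper's: apply Proposition \ref{pro:covering2} to the meridian and feed the resulting flat-length lower bound into Lemma \ref{lem:area}. Your explicit verification that $\mu_\gamma$ is not conjugate into $\pi_1(Y)$ is a welcome addition that the paper leaves implicit (a shorter route: $\mu_\gamma$ is null-homotopic in $S\times[-1,1]$, so any conjugate of it lying in $\pi_1(Y)\hookrightarrow\pi_1(S)$ would be trivial, contradicting the incompressibility of $\partial V_\gamma$ in $M$).

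One small correction: you should not ``absorb $1/\sinh(2\ep_0)$ into the constants'' by renaming, because the corollary is stated with the \emph{same} $c_3,c_4$ as Proposition \ref{pro:covering2}. The point is rather that $\ep_0<\operatorname{arcsinh}(1/4)$ forces $\sinh(2\ep_0)<1$, so $\sqrt{X/\sinh(2\ep_0)}\ge\sqrt{X}$ and the factor can simply be dropped. With that adjustment your argument is complete and matches the paper.
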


Corollary \ref{cor:length meridians} enables us to check that the condition of Theorem \ref{thm:filling} is satisfied. We remark that with our choice of $\ep=2\ep_Y$ the maximum in Theorem \ref{thm:filling} is achieved by the first of the two quantities. By the remark below the theorem, the total normalized length is bounded from below by the minimal normalized length of the meridians divided by the square root of the number of components of $\gamma_Y$ which is $\le|\chi(Y)|+2\le 2|\chi(Y)|$. Thus, the condition that we want to check is
\begin{align*}
\frac{1}{2|\chi(Y)|}\left(\frac{c_3}{|\chi(Y)|^{264}}d_Y(\delta^-,\delta^+)-36\log(|\chi(Y)|)-c_4\right)\\
\ge\frac{2\pi\cdot 6771\cosh^5(1.2\ep_Y+0.1475)}{\ep_Y^5}+11.7.
\end{align*}

\begin{remark}
\label{rmk:lower bound}    
Combining a little calculus with $\ep_Y=\ep_0^{10}/2^{37}\pi^8|\chi(Y)|^{16}$, in order to make sure that the above holds it is enough to require that
\[
\frac{c_3}{2|\chi(Y)|^{265}}d_Y(\delta^-,\delta^+)-\frac{36\log(|\chi(Y)|)+c_4}{2|\chi(Y)|}\ge c_6|\chi(Y)|^{80}.
\]
where $c_6=2^{223}/\ep_0^{50}$.
\end{remark}

Hence, we can find a 2-bilipschitz diffeomorphism $Q\to M(\nu^-,\nu^+)$ defined on the complement of the thin part. Such a diffeomorphism allows us to compare the length of $\ell_Q(\gamma)$ to the length of the standard meridian on $\partial\mb{C}_{\ep_Y}(\partial V_\gamma)$. In fact, the length of the flat geodesic representative $\overline{\mu}$ of the standard meridian of $\mb{T}_{2\ep_Y}(\gamma,Q)$ is
\[
{\rm Length}(\overline{\mu})=2\pi\sinh(R)
\]
where $R$ is the normal radius of $\mb{T}_{2\ep_Y}(\gamma,Q(\nu^-,\nu^+))$. By \cite[Proposition 5.6]{FPS19}, the latter satisfies 
\[
\sinh(R)=\sqrt{\cosh(R)^2-1}\le\sqrt{\frac{\cosh(4\ep_Y)-1}{\cosh(\ell_Q(\gamma))-1}-1}\le\frac{4\ep_Y}{\ell_Q(\gamma)}.
\]
Thus ${\rm Length}(\overline{\mu})\le 8\pi\ep_Y/\ell_Q(\gamma)$.

Note that 
\[
\mb{C}_{\ep_Y}(\partial V_\gamma)\subset \phi(\partial\mb{T}_{2\ep_Y}(\gamma,Q))\subset\mb{C}_{4\ep_Y}(\partial V_\gamma)
\]
and there is a 1-Lipschitz retraction 
\[
\mb{C}_{4\ep_Y}(\partial V_\gamma)\to\partial\mb{C}_{\ep_Y}(\partial V_\gamma).
\]

As a consequence, the length of the flat geodesic representative of the meridian $\mu$ on $\partial\mb{C}_{\ep_Y}(V_\gamma)$ is at most
\[
{\rm Length}(\phi(\overline{\mu}))\le 2\cdot{\rm Length}(\overline{\mu})\le\frac{8\ep_Y}{\ell_Q(\gamma)}.    
\]
On the other hand, by Proposition \ref{pro:covering2}, the length of the flat geodesic representative of the meridian $\mu$ on $\partial\mb{C}_{\ep_1}(V_\gamma)$ is at least
\[
\frac{c_3}{|\chi(Y)|^{264}}d_Y(\delta^-,\delta^+)-36\log(|\chi(Y)|)-c_4.    
\]
Thus, we conclude that
\[
\frac{c_3}{|\chi(Y)|^{264}}d_Y(\delta^-,\delta^+)-36\log(|\chi(Y)|)-c_4\le\frac{16\pi\ep_Y}{\ell_Q(\gamma)}\le\frac{c_7}{|\chi(Y)|^{16}}\cdot\frac{1}{\ell_Q(\gamma)}
\]
where $c_7=2^{60}\ep_0^{10}$. This finishes the proof of Theorem \ref{thm:main1} modulo assembling all the constants we found in a single formula. This is what we do in the last section.

\subsection{Assembling the constants}
By Corollary \ref{cor:length meridians} and Remark \ref{rmk:lower bound}, we want to require that
\[
\frac{c_3}{2|\chi(Y)|^{265}}d_Y(\delta^-,\delta^+)-\frac{36\log(|\chi(Y)|)+c_4}{2|\chi(Y)|}\ge c_6|\chi(Y)|^{80}.
\]
This is equivalent to
\[
\frac{1}{2}d_Y(\delta^-,\delta^+)\ge|\chi(Y)|^{345}\frac{c_6}{c_3}+|\chi(Y)|^{265}\frac{36\log|\chi(Y)|+c_4}{2c_3}.
\]
If we throw away the low order terms and add the resulting constants (the right hand side of the next inequality is always bigger that the corresponding one in the previous inequality), it is enough to ask that
\[
d_Y(\delta^-,\delta^+)\ge\frac{2c_6+36+c_4}{c_3}|\chi(Y)|^{345}.
\]
By Remarks \ref{rmk:meridian} and \ref{rmk:lower bound} we can choose $c_3=\ep_0^{150}/2^{870},c_4=40\log(64/\ep_0),c_6=2^{223}/\ep_0^{50}$. Again, throwing away lower order terms $(2c_6+36+c_4)/c_3\le 4c_6/c_3=2^{1095}/\ep_0^{200}$. 

As for the length, by the formula at the end of the previous section, we get  
\begin{align*}
\ell_Q(\gamma) &\le\frac{c_7}{|\chi(Y)|^{16}}\cdot\frac{1}{\frac{c_3}{|\chi(Y)|^{264}}d_Y(\delta^-,\delta^+)-36\log(|\chi(Y)|)-c_4}\\
 &\le\frac{c_7}{|\chi(Y)|^{16}}\cdot\frac{1}{\frac{c_3}{2|\chi(Y)|^{264}}d_Y(\delta^-,\delta^+)}=\frac{2c_7}{c_3}\cdot\frac{|\chi(Y)|^{248}}{d_Y(\delta^-,\delta^+)}.
\end{align*}
Recalling that $c_7=2^{60}\ep_0^{10},c_3=\ep_0^{150}/2^{270}$ we get $2c_7/c_3=2^{331}/\ep_0^{160}$.

Lastly we relate $d_Y(\delta^-,\delta^+)$ to $d_Y(\nu^-,\nu^+)$. By Lemma \ref{lem:covering1}, we have
\[
|d_Y(\nu^-,\nu^+)-d_Y(\beta^-,\beta^+)|\le 4.
\]
By Lemma \ref{lem:shorter} and Remark \ref{rmk:shorter}, we have
\[
d_Y(\delta^\pm,\beta^\pm)\le c_2\log(|\chi(S)|)
\]
with $c_2=230\log_2(c_1)$ and $c_1=2^{109}/\ep_0^{60}$ (see Lemma \ref{lem:covering1}). Thus
\[
|d_Y(\nu^-,\nu^+)-d_Y(\delta^-,\delta^+)|\le 4+c_2\log(|\chi(S)|)
\]
which allows us to replace $d_Y(\delta^-,\delta^+)$ with $d_Y(\nu^-,\nu^+)$. This concludes the proof of Theorem \ref{thm:main1}.

\section{Proof of Theorem \ref{thm:main2}}

Lastly, using Theorem \ref{thm:main1} we quickly prove Theorem \ref{thm:main2}.

\begin{proof}[Proof of Theorem \ref{thm:main2}]
We make the following preliminary observation. As there is always a 1-Lipschitz map $(S,\sigma)\to M$ from a hyperbolic surface into $M$ homotopic to the inclusion $S\subset M$ and it is well known that the shortest essential loop on $(S,\sigma)$ has length at most $2\log(4|\chi(S)|)$, we know that for sure  $2\cdot{\rm inj}(M)\le 2\log(4|\chi(S)|)$ (twice the injectivity radius is the length of the shortest essential loop in $M$). 

We now consider two cases. Let $a,b,c$ be the constants of Theorem \ref{thm:main1}. Either $d_Y(\nu_F^-,\nu_F^+)<(a+b)|\chi(S)|^{345}$ or $d_Y(\nu_F^-,\nu_F^+)\ge (a+b)|\chi(S)|^{345}$ (which is larger than the threshold $ a|\chi(Y)|^{345}+b\log|\chi(S)|$ of Theorem \ref{thm:main1}). In the second case, by \cite[Theorem 1.2]{MT24}, we have that $d_Y(\nu_F^-,\nu_F^+)=d_W(\nu_S^-,\nu_S^+)$ for some non-annular subsurface $W\subset S$. By Theorem \ref{thm:main1}, recalling that $2\cdot{\rm inj}(M)\le\ell_M(\partial W)$, we have 
\begin{align*}
d_W(\nu_S^-,\nu_S^+) &\le\frac{c|\chi(W)|^{248}}{\ell_M(\partial W)}+b\log|\chi(S)|\\
&\le\frac{2c|\chi(W)|^{248}}{{\rm inj}(M)}+b\log|\chi(S)|=\frac{2c|\chi(W)|^{248}+b\cdot{\rm inj}(M)\log|\chi(S)|}{{\rm inj}(M)}.
\end{align*}

Using ${\rm inj}(M)\le \log(4|\chi(S)|)$ and $\log(|\chi(S)|)\log(4|\chi(S)|)\le|\chi(S)|^{248}$ we continue the chain of inequalities with 
\[
\le\frac{2c|\chi(S)|^{248}+b\cdot\log(|\chi(S)|)\log(4|\chi(S)|)}{{\rm inj}(M)}\le\frac{(2c+b)|\chi(S)|^{248}}{{\rm inj}(M)}.    
\]

Thus, the following always holds:
\begin{align*}
d_Y(\nu_F^-,\nu_F^+) &\le\max\left\{(a+b)|\chi(Y)|^{345},\frac{(2c+b)|\chi(S)|^{248}}{{\rm inj}(M)}\right\}\\
&\le (a+b)|\chi(Y)|^{345}+\frac{(2c+b)|\chi(S)|^{248}}{{\rm inj}(M)}\\
&=\frac{(a+b)|\chi(Y)|^{345}{\rm inj}(M)+(2c+b)|\chi(S)|^{248}}{{\rm inj}(M)}\le\frac{(2a+3b+2c)|\chi(S)|^{346}}{{\rm inj}(M)} &
\end{align*}
where we in the last step we used ${\rm inj}(M)\le \log(4|\chi(S)|)\le 2|\chi(S)|$. Setting $k=2a+3b+2c$ concludes the proof.
\end{proof}

\bibliographystyle{amsalpha.bst}
\bibliography{bibliography}

\Addresses

\end{document}